\newcommand{\RR}{\mathbb{R}}
\newcommand{\GG}{\mathbb{G}_S}
\newcommand{\GGb}{\mathbb{G}_{1/2}}
\newcommand{\bbS}{\mathbb{S}}
\newcommand{\bS}{\boldsymbol{S}}
\newcommand{\cN}{\mathcal{N}}
\newcommand{\cR}{\mathcal{R}}
\newcommand{\tr}{\textrm{Tr}}
\newcommand{\conv}{\textrm{conv}}
\newcommand{\xc}{\textnormal{xc}}
\newcommand{\rank}{\textrm{rank}}
\newcommand{\enet}{\epsilon_{\textrm{net}}}
\newcommand{\fs}{f^{\sharp}}
\newcommand{\ff}{f^{\flat}}
\newcommand{\supp}{\textrm{supp}~}
\newcommand{\Trho}{T_{\rho}}
\newcommand{\COR}{\textnormal{COR}}
\newcommand{\cK}{ \mathcal{K}}
\newcommand{\cV}{ \mathcal{V}}
\newcommand{\proj}{\textrm{proj}}
\newcommand{\ext}{\textrm{ext}}
\newcommand{\bOne}{\boldsymbol{1}}
\newcommand{\vol}{\textrm{vol}}
\newcommand{\vrad}{\textrm{vrad}}
\newcommand{\GOE}{\textrm{GOE}}
\newcommand{\Arg}[1]{\text{Arg}\left( #1 \right)}
\newcommand{\Snk}{\bS_+^{n,k}}
\newcommand{\epsbasic}{\epsilon^*}
\newcommand{\epsavg}{\epsilon^*_{\textrm{avg}}}
\newcommand{\epsdavg}{\epsilon^*_{\textrm{dual-avg}}}
\newcommand{\base}[1]{ B_H \left( #1 \right) }
\newcommand{\based}[1]{ B_H^* \left( #1 \right) }
\newcommand{\normop}[1]{ \left\| #1 \right\|_{op}}
\newcommand{\Ind}[1]{ \boldsymbol{1}_{\left\{#1\right\}}}
\newcommand{\inner}[2]{ \left\langle #1,~ #2 \right\rangle}
\DeclareMathOperator*{\cl}{\textrm{cl}}
\DeclareMathOperator*{\cone}{\textrm{cone}}
\newcommand{\bbE}{\mathbb{E}}
\newtheorem{definition}{Definition}[section]
\newtheorem{theorem}{Theorem}
\newtheorem{lemma}{Lemma}[section]
\newtheorem{proposition}{Proposition}
\newtheorem{example}{Example}[section]
\newtheorem{corollary}{Corollary}
\theoremstyle{remark}
\newtheorem{remark}{Remark}
\title{
On Approximations of the PSD Cone by\\a Polynomial Number of Smaller-sized PSD Cones
}
\author{Dogyoon Song \and Pablo A. Parrilo}
\begin{document}
\maketitle

\begin{abstract}
We study the problem of approximating the cone of positive semidefinite (PSD) matrices with a cone that can be 
described by smaller-sized PSD constraints. Specifically, we ask the question: ``how closely can we approximate 
the set of unit-trace $n \times n$ PSD matrices, denoted by $D$, using at most $N$ number of $k \times k$ PSD constraints?'' 
In this paper, we prove lower bounds on $N$ to achieve a good approximation of $D$ by considering two constructions 
of an approximating set.
First, we consider the unit-trace $n \times n$ symmetric matrices that are PSD when restricted to a fixed set of 
$k$-dimensional subspaces in $\RR^n$. We prove that if this set is a good approximation of $D$, then the number 
of subspaces must be at least exponentially large in $n$ for any $k = o(n)$.
Second, we show that any set $S$ that approximates $D$ within a constant approximation ratio must have 
superpolynomial $\bS_+^k$-extension complexity. 
To be more precise, if $S$ is a constant factor approximation of $D$, then $S$ must have $\bS_+^k$-extension complexity 
at least $\exp( C \cdot \min \{ \sqrt{n}, n/k  \})$ where $C$ is some absolute constant. 
In addition, we show that any set $S$ such that $D \subseteq S$ and the Gaussian width of $D$ is at most a constant 
times larger than the Gaussian width of $D$ must have $\bS_+^k$-extension complexity at least $\exp( C \cdot 
\min \{ n^{1/3}, \sqrt{n/k}  \})$.
These results imply that the cone of $n \times n$ PSD matrices cannot be approximated by a polynomial number 
of $k \times k$ PSD constraints for any $k = o(n / \log^2 n)$.
These results generalize the recent work of Fawzi \cite{fawzi2018polyhedral} on the hardness of polyhedral approximations 
of $\bS_+^n$, which corresponds to the special case with $k=1$.
\end{abstract}

\section{Introduction}\label{sec:intro}
Semidefinite programming (SDP) is a branch of convex optimization that considers problems of the form
\begin{equation}\label{eqn:sdp}
	\begin{aligned}
		\text{maximize}	\quad&\langle C, X\rangle\\
		\text{subject to}\quad&\langle A_i, X \rangle = b_i, \quad i = 1, \ldots, m,\\
						&X \in \bS_+^n,
	\end{aligned}
\end{equation}
where $C, A_i$'s are $n \times n$ symmetric matrices and $\bS_+^n$ denotes the cone of $n \times n$ positive semidefinite (PSD) matrices.
SDP has attracted great interest in many fields as a powerful tool to provide theoretical guarantees as well as practical algorithms. 
Although current SDP solvers utilizing interior-point methods can solve an SDP up to arbitrary accuracy, they suffer from large computational cost 
and memory requirement when $n$ is large. Indeed, such scalability issues remain as major challenges for researchers in the field.

To deal with these problems, one can seek an alternative formulation of the problem \eqref{eqn:sdp} that is computationally more tractable. 
For instance, one can try to replace the PSD cone with a computationally more tractable convex cone $\cK$ to approximate the feasible set. 
 %
If $\cK$ is a polyhedral cone, we obtain a linear programming (LP) approximation of \eqref{eqn:sdp}, and if $\cK$ is a second-order cone, 
then we get a second-order cone programming (SOCP) approximation, etc. These approximate conic programs can be solved potentially 
much faster than the original SDP, but possibly at the expense of the quality of the solution.

There arises an inevitable question: ``how much error is incurred in the optimal value of \eqref{eqn:sdp} when $\bS_+^n$ 
is replaced by $\cK$?'' In this work, we study this problem by asking the following question:
\begin{quote}\emph{
	``How closely can we approximate $\bS_+^n$ with a cone $\cK$ 
	that can be described using at most $N$ number of $k \times k$ PSD constraints?''}
\end{quote} 
We remark that we consider in this work global, non-adaptive approximations of $\bS_+^n$ that do not make use of the problem data $C, (A_i, b_i)_{i=1}^m$.

\paragraph{Contributions}
To formally state the question above, we need to specify the notion of approximation as well as 
what `a cone $\cK$ that can be described using at most $N$ number of $k \times k$ PSD constraints' means.

\bigskip\noindent
\emph{Notions of Approximation. }
First, we specify the notions of approximation for cones as follows. Let $H = \{ X \in \bS^n: ~ \tr ~X = 1 \}$ and let 
$\base{\cK} = (\cK \cap H) - \frac{1}{n}I_n$ for any cone $\cK$, where $I_n$ is the $n \times n$ identity matrix. 
That is, $\base{\cK}$ is the unit-trace affine section of $\cK$ translated by $- \frac{1}{n} I_n$; note that $0 \in \base{\bS_+^n}$. 
For $\epsilon > 0$, we say $\cK$ is an \emph{$\epsilon$-approximation} of $\bS_+^n$ if $\base{\bS_+^n} \subseteq 
\base{\cK} \subseteq (1+\epsilon) \base{\bS_+^n}$. 

This notion of approximation is natural and closely related to quantifying the difference in the optimal value (optimality gap) 
induced by relaxing $\bS_+^n$ to $\cK$. Suppose that we are given a problem of the form \eqref{eqn:sdp} with $m = 1$, 
$A_1 = I_n$, and $b_1 = 1$, and we relax the problem by replacing $\bS_+^n$ with a cone $\cK \supseteq \bS_+^n$. 
If $\cK$ is an $\epsilon$-approximation of $\bS_+^n$, then the relative optimality gap is at most $\epsilon$ for all $C \in \bS^n$.


We also define two auxiliary notions of approximation for the convenience of our analysis. 
Observe that the notion of $\epsilon$-approximation requires $\base{\cK}$ to approximate $\base{ \bS_+^n }$ well  in all directions in the ambient space. 
We introduce more lenient notions of approximation by requiring the relative optimality gap to be small only on average for randomized $C$ with standard 
Gaussian distribution. Specifically, $\cK$ is called an $\epsilon$-approximation of $\bS_+^k$ in the \emph{average sense} 
if $\base{\bS_+^n} \subseteq \base{\cK}$ and $w_G\big( \base{\cK} \big) \leq (1+\epsilon) \cdot w_G\big( \base{\bS_+^n} \big)$ 
where $w_G(S) = \bbE_g   \left[ \sup_{x \in S} \langle g, x \rangle \right]$ denotes the Gaussian width of $S$.
Likewise, $\cK$ is called an $\epsilon$-approximation of $\bS_+^k$ in the \emph{dual-average sense} if $\base{\bS_+^n} \subseteq 
\base{\cK}$ and $w_G\big( \base{\cK}^{\circ} \big) \geq \frac{1}{1+\epsilon}\cdot  w_G\big( \base{\bS_+^n}^{\circ} \big)$. 
More details about these notions can be found in Section \ref{sec:notion_approx}.

\bigskip\noindent
\emph{$k$-PSD Approximations of $\bS_+^n$. }
In Section \ref{sec:results.1}, we consider approximating $\bS_+^n$ by enforcing PSD constraints on certain $k$-dimensional subspaces in $\RR^n$. 
We begin by formally defining the $k$-PSD approximation of $\bS_+^n$.
\begin{definition}[$k$-PSD approximation]\label{def:restr_kPSD}
	Let $\cV = \{ V_1, \dots, V_N \}$ be a set of $k$-dimensional subspaces of $\RR^n$. The $k$-PSD approximation of $\bS_+^n$
	induced by $\cV$ is the convex cone
	\[
		\Snk( \cV ) := \big\{ X \in \bS^n: v^T X v \geq 0, ~\forall v \in V_i, ~\forall i = 1, \dots, N\}.
	\]
\end{definition}
\noindent
Note that $\bS_+^n \subseteq \Snk( \cV )$ and that $\Snk( \cV ) $ can be characterized using at most $N = | \cV|$ number of 
$k \times k$ PSD constraints.
A prominent example is the so-called sparse $k$-PSD approximation, denoted by $\Snk$, which is a $k$-PSD approximation of $\bS_+^n$ 
induced by the collection of $N = {n \choose k}$ subspaces of $k$-sparse vectors in $\RR^n$. 


Our first main results (Theorem \ref{thm:main.0} and Corollary \ref{coro:main.0}) state that if $\Snk( \cV )$ is a dual-average $\epsilon$-approximation 
of $\bS_+^n$, then $N \geq \exp\big( n \cdot \max\{ 1/(1+\epsilon) - \sqrt{k/n}, ~ 0 \}^2\big)$ is necessary, 
regardless of the choice of the subspaces $V_1, \dots, V_N$; see Remark \ref{rem:genkPSD_asymp} in Section \ref{sec:restr_kPSD}.
For instance, Corollary \ref{coro:main.0} implies that for any $\epsilon > 0$, $\Snk$ cannot be a dual-average $\epsilon$-approximation 
of $\bS_+^n$ unless $k = \Omega_n(n)$.

We remark that the conclusion of Theorem \ref{thm:main.0} (and Corollary \ref{coro:main.0}) is possibly too conservative, 
especially when the subspaces have overlaps. 
It is because the proof of Theorem \ref{thm:main.0} only takes the number of subspaces into consideration, 
and is oblivious to the configuration of the subspaces in $\cV$.
In Section \ref{sec:coord_kPSD}, we elaborate on this point with an example of the sparse $k$-PSD approximation. 
Although Corollary \ref{coro:main.0} already suggests that $k$ must scale at least linearly as $n$ in order for $\Snk$ to approximate $\bS_+^n$, 
it becomes uninformative once $k/n$ exceeds a certain threshold (approximately $0.137$); see Section \ref{sec:k_coordinate_generic} and Figure \ref{fig:generic_kPSD_delta}. 
%

In Section \ref{eqn:eps_approx}, a tailored analysis for the sparse $k$-PSD approximation is provided. 
To be specific, we consider a carefully designed matrix in $\Snk \setminus \bS_+^n$ to show $\epsbasic(\bS_+^n, \Snk) \geq \frac{n-k}{k-1}$ 
(Proposition \ref{prop:e-approx}) where $\epsbasic(\bS_+^n, \Snk) := \inf\{ \epsilon > 0: \Snk \text{ is an }\epsilon \text{-approximation of }\bS_+^n \}$. 
Furthermore, we prove a sharper lower bound for $\epsdavg(\bS_+^n, \Snk)$ that is strictly positive for all $1 \leq k < n$, 
using the duality between $\Snk$ and the cone of factor width at most $k$ (Proposition \ref{prop:upper_affine}). 
See Figure \ref{fig:sparse_kPSD_overview} for comparison between these tailored results and the weak bound obtained from Corollary \ref{coro:main.0}.

\bigskip\noindent
\emph{Approximate Extended Formulations of $\bS_+^n$. }
Recall that a $k$-PSD approximation of $\bS_+^n$ is the intersection of sets in $\bS^n$, each of which is 
described with a $k \times k$ PSD constraint.
Instead of directly intersecting sets in $\bS^n$, we may introduce additional variables in pursuit of 
a more compact description. To be precise, we can lift $\bS^n$ to a higher-dimensional space by embedding, 
intersect the lifted space with $k \times k$ PSD constraints, and then project the intersection back 
to describe a set in $\bS^n$. 
The resulting description is called an extended formulation of the set, and the preimage of the projection is 
called the lifted representation (or PSD lift) of the set.
The $\bS_+^k$-extension complexity of a set $S$, denoted by $\xc_{\bS_+^k}(S)$, counts the minimum number of 
$k \times k$ PSD constraints required to describe $S$ using extended formulation (i.e., with an arbitrary number of 
additional variables allowed in the description).

In Section \ref{sec:result.extended}, we argue that any set that well approximates $\base{\bS_+^n}$ must have 
$\bS_+^k$-extension complexity at least superpolynomially large in $n$ for all $k$ much smaller than $n$. 
That is, it is impossible to approximate $\base{\bS_+^n}$ using only polynomially many $k \times k$ PSD constraints, 
for any construction of the approximating set.
To be precise, if $S$ is an $\epsilon$-approximation of $\base{\bS_+^n}$, then $ \xc_{\bS_+^k}(S) \geq
	\exp \big( C \cdot \min \big\{ ( \frac{ n }{ 1+\epsilon} )^{1/2}, ~ \frac{1}{1+\epsilon}~ \frac{n}{k} \big\} \big)$ (Theorem \ref{thm:main.1}); 
and if $S$ is an average $\epsilon$-approximation of $\base{\bS_+^n}$, then $ \xc_{\bS_+^k}(S) \geq
	\exp \big( C \cdot \min \big\{ ( \frac{ n }{ (1+\epsilon)^2} )^{1/3} , ~ \frac{1}{1+\epsilon} (\frac{n}{k})^{1/2} \big\} \big)$ (Theorem \ref{thm:main.2}).
These results are visually illustrated in Figure \ref{fig:generic_kPSD_delta_overview}. 
We remark that these results extend \cite[Theorems 1 \& 2]{fawzi2018polyhedral} beyond the special case $k=1$. 

Nevertheless, we do not know whether our extension complexity lower bounds are tight. It might be possible to 
achieve stronger extension complexity lower bounds (i.e., move the curves upward) by means of a more 
sophisticated analysis. We leave it as an interesting open problem.

\bigskip\noindent
\emph{Summary of Results.}
Table \ref{tab:comparison} summarizes the results in this paper. The lower bounds in the table imply the hardness of 
approximating $\bS_+^n$ by using only a small number of $k \times k$ PSD constraints when $k = o(n)$.
%

\begin{table}[t]
	\caption{Overview of our results about the hardness of approximating $\bS_+^n$ with $\bS_+^k$, presented in terms of 
	the number $N$ of the $k \times k$ PSD constraints needed to construct an $\epsilon$-approximation of $\bS_+^n$. 
	Here, $C_1, C_2 > 0$ are some universal constants and $\gtrsim$ indicates that the inequality holds asymptotically in the limit $n \to \infty$.
	}
	\label{tab:comparison}
	\centering
	\resizebox{\columnwidth}{!}{
	\begin{tabular}{ l c c }
		\toprule
			Notion of Approx.			&	$k$-PSD Approximations of $\bS_+^n$	&	Approximate Extended Formulations of $\bS_+^n$\\
		\midrule
			\makecell[l]{$\epsilon$-approx.\\(Definition \ref{defn:approx})}	&	\makecell[c]{$N \gtrsim \exp\Big( n \cdot \max \Big\{ \frac{1}{1+\epsilon} - \sqrt{\frac{k}{n}}, ~ 0\Big\}^2\Big)$\\(Theorem \ref{thm:main.0} \& Corollary \ref{coro:main.0})}								
				&	\makecell[c]{$N \gtrsim \exp \Big( C_1 \cdot \min \big\{ \sqrt{ \frac{ n }{ 1+\epsilon} }, ~ \frac{1}{1+\epsilon}~ \frac{n}{k} \big\} \Big) $\\(Theorem \ref{thm:main.1})}				\\
		\midrule
			\makecell[l]{avg. $\epsilon$-approx.\\(Definition \ref{defn:average_approx})}	&	\makecell[c]{Same lower bound\\as on the right}							
				&	\makecell[c]{$N \gtrsim \exp \Big( C_2 \cdot \min \big\{ \big( \frac{ n }{ (1+\epsilon)^2} \big)^{1/3} , ~ \frac{1}{1+\epsilon} \sqrt{\frac{n}{k}} \big\} \Big) $	\\(Theorem \ref{thm:main.2})}				\\
		\bottomrule
	\end{tabular}
	}
\end{table}

\begin{figure}[h!]
	\centering
	\begin{subfigure}[b]{0.47\textwidth}
		\centering
		\includegraphics[width=\textwidth]{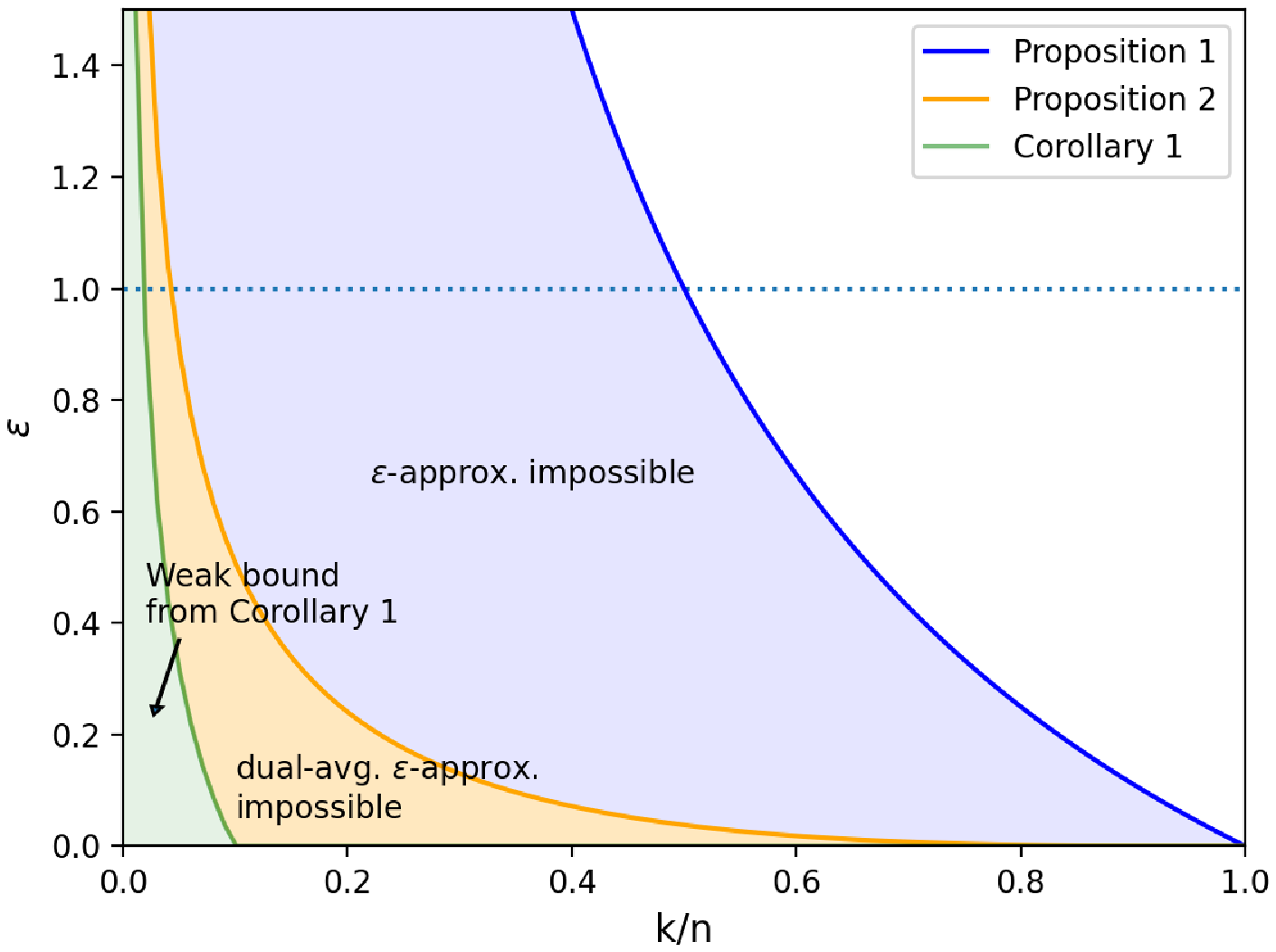}
		\caption{Hardness results for sparse $k$-PSD approximations. Generic (Cor. 1) and tailored bounds (Props. 1 \& 2).}
		\label{fig:sparse_kPSD_overview}
	\end{subfigure}
	\hfill
	\begin{subfigure}[b]{0.47\textwidth}
		\centering
		\includegraphics[width=\textwidth]{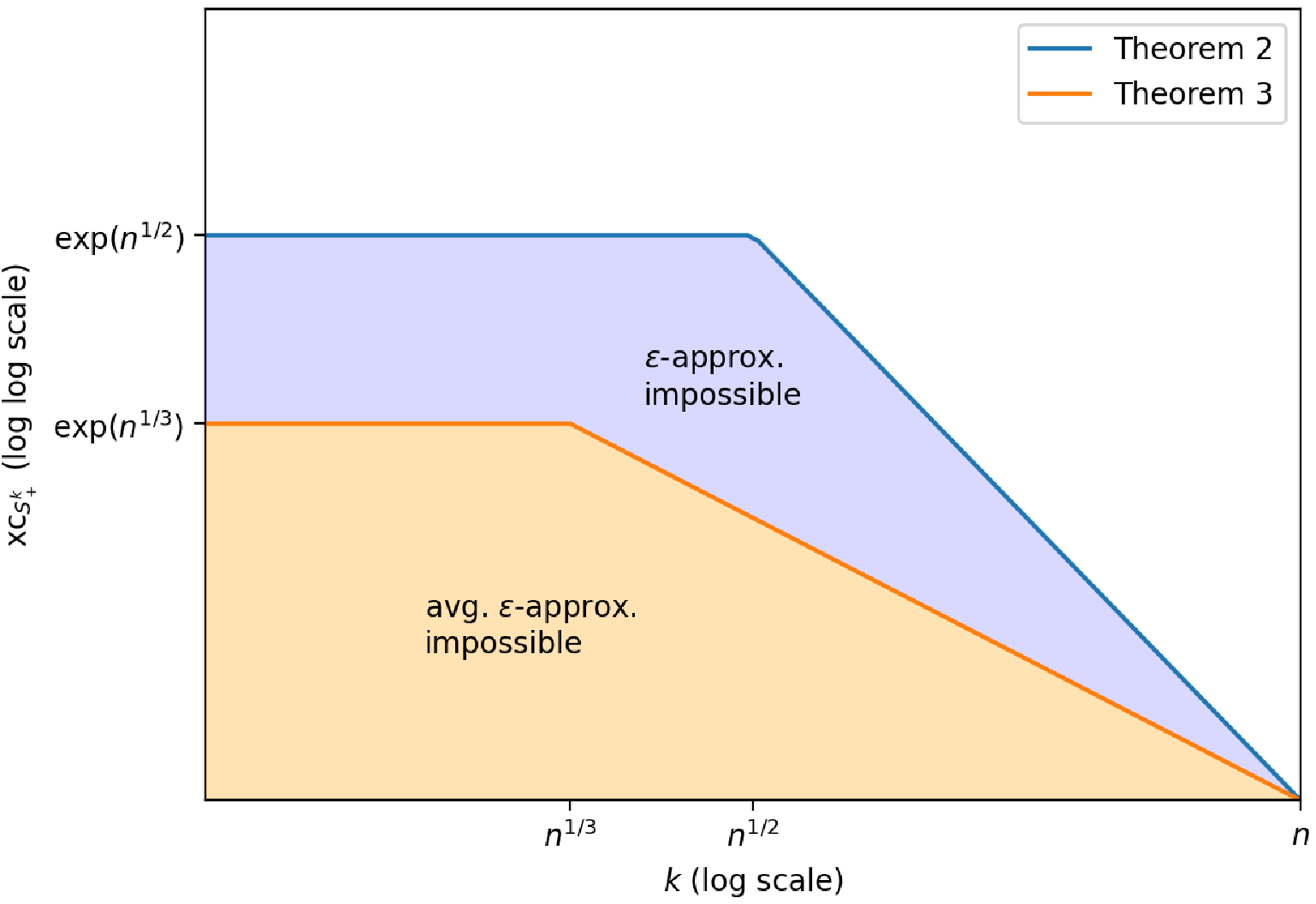}
		\caption{Impossibility of approximating $B_H(\bS_+^n)$ with a polynomial number of $k \times k$ PSD constraints.}
		\label{fig:generic_kPSD_delta_overview}
	\end{subfigure}
	\caption{Summary of our results about the hardness of approximating $\bS_+^n$.}
	\label{fig:summary}
\end{figure}

\newpage
\paragraph{Discussion and Related Work}
Here we make a few comments on our results and some related work.
\begin{itemize}
	\item
	Blekherman et al. \cite{blekherman2020sparse} also investigated the question of how well $\Snk$ approximates $\bS_+^n$. 
	They use the quantity $\overline{ \textrm{dist}}_F(\Snk, \bS_+^n) := \sup_{X \in \Snk, \| X \|_F = 1} \inf_{Y \in \bS_+^n} \| X - Y \|_F$ 
	to measure the quality of approximation, and thus, their result has a connection with our result on $\epsilon$-approximation.
	In this work, we extend the scope of the question in two directions: first, we consider the `average' distance with the notion of 
	average $\epsilon$-approximation as well as the maximal distance; second, our result (Theorem \ref{thm:main.0}) 
	applies to not only $\Snk$, but also $\Snk(\cV)$ with an arbitrary collection of $k$-dimensional subspaces $\cV$.

	\item
	Fawzi \cite{fawzi2018polyhedral} showed that any polytope that well approximates $\base{\bS_+^n}$ must have 
	LP extension complexity at least exponentially large in $n$. Our Theorems \ref{thm:main.1} and \ref{thm:main.2} 
	generalize their results beyond the special case of $k = 1$. 
	Our proof refines and adapts the ideas from \cite{fawzi2018polyhedral} to prove a lower bound for arbitrary $k$. 
	Specifically, we devise a different way of decomposing the component functions of the $\bS_+^k$-factorization 
	of the slack matrix into their sharp and flat parts, which enable us to apply Fawzi's argument even when $k > 1$. 
	In addition, we compare the variance of two representations of the slack matrix instead of their tail probabilities 
	to obtain a nontrivial $\bS_+^k$-extension complexity lower bound even when $k = \Omega_n(\sqrt{n})$. 
	See the proof of Theorem \ref{thm:main.1} in Section \ref{sec:proof_thm.1} for details.

	\item
	Here we compare our Theorem \ref{thm:main.1} (extension complexity lower bound for an $\epsilon$-approximation of $\base{\bS_+^n}$) 
	with a back-of-the-envelope calculation based on known results about the LP extension complexity 
	of $\base{\bS_+^n}$. Assume that there is a set $S$ such that $\xc_{\bS_+^k}(S) = N$ and $S$ is an $\epsilon$-approximation of $\base{\bS_+^n}$. 
	On the one hand, each of the $N$ cones of $k \times k$ PSD matrices can be approximated by $\exp(ck)$ facets of linear inequalities, where 
	$c > 0$ is an absolute constant; see Aubrun and Szarek \cite[Proposition 10]{aubrun2017dvoretzky}. Thus, the LP extension complexity of $S$ 
	is at most $N \exp(ck)$. On the other hand, the LP extension complexity of $S$ is at least $\exp( c' \sqrt{n} )$; see Fawzi \cite[Theorem 1]{fawzi2018polyhedral}. 
	Therefore, we get $N \geq \exp( c' \sqrt{n} - c k )$. This lower bound becomes trivial when $k = \Omega_n(\sqrt{n})$. In contrast, the 
	lower bound from Theorem \ref{thm:main.1} remains superpolynomial as long as $k = o_n( n / \log n)$.

	\item
	We also remark some works that studied lower bounds on the semidefinite extension complexity of polytopes associated with 
	NP-Hard combinatorial problems. Fawzi and Parrilo \cite{fawzi2013exponential} showed that the $\bS_+^k$-extension complexity 
	of the correlation polytope, $\COR(n) := \conv \big\{ vv^T: v \in \{0, 1\}^n \big\}$, is exponentially large in $n$ for any fixed constant $k$. 
	Their proof relies on a combinatorial argument that counts possible sparsity patterns of certain matrices with small PSD rank. 
	Lee, Raghavendra, and Steurer \cite{lee2015lower} proved a stronger lower bound on the semidefinite extension complexity of 
	the correlation polytope, 
	based on the notion of low-degree sum-of-squares proof. While these works consider a similar problem to ours, the object of study is 
	different; in this work, we are interested in the approximate semidefinite extension complexity of the \emph{spectrahedron} $B_H( \bS_+^n )$.

	\item
	Let $D_k = (\Snk)^* \cap H$. In our analysis, $w_G(D_k)$ turns out to be the expectation of the 
	largest $k$-sparse eigenvalue of the Gaussian Orthogonal Ensemble (GOE) (divided by $\sqrt{2}$).
	In this work, we only provide an asymptotic upper bound for $w_G( D_k )$ using Slepian's lemma (Proposition \ref{prop:upper_affine}), 
	however, it might be possible to prove a lower bound for $w_G( D_k )$ with tools from random matrix theory.
	

	\item
	We do not know whether our lower bounds in Theorems \ref{thm:main.1} and \ref{thm:main.2} are tight. 
	We remark that our proof techniques utilize information from the slack matrix only up to degree 2, i.e., up to 
	the second moment. It may be possible to achieve a stronger lower bound by exploiting higher-order moments.

	\item
	In this work, we consider the question of approximating $\bS_+^n \cap H$ and show that at least superpolynomially many 
	$k \times k$ PSD constraints are needed when $k \ll n$. However, if one is allowed to exploit the problem data -- $C, A_i, b_i$ -- 
	it could be still possible to construct a good approximation $F'$ of the feasible set $F = \{ X \in \bS_+^n : \langle A_i, X \rangle = b_i \}$ 
	with a smaller number of $\bS_+^k$ so that the optimality gap $\sup_{X \in F'} \langle C, X \rangle - \sup_{X \in F} \langle C, X \rangle$ 
	is small as empirically evidenced in \cite{ahmadi2019dsos}.
	
\end{itemize}

\paragraph{Organization}
In Section \ref{sec:background}, we review some background materials. In Section \ref{sec:notion_approx}, 
we define the notions of approximation that will be used in this paper. In Section \ref{sec:results.1}, we consider 
the $k$-PSD approximations of $\bS_+^n$. Specifically, Section \ref{sec:restr_kPSD} discusses 
a generic lower bound on the number of subspaces required to approximate $\bS_+^n$, and Section \ref{sec:coord_kPSD} 
provides a more refined analysis tailored to the so-called sparse $k$-PSD approximation of $\bS_+^n$.
In Section \ref{sec:result.extended}, we consider the approximate extended formulations of $\bS_+^n$. 
In Section \ref{sec:main_thms}, we present two main theorems about 
the hardness of approximating $\bS_+^n$. Section \ref{sec:proof_thm.1} and Section \ref{sec:proof_thm.2} 
are dedicated to their proofs.

\paragraph{Notation}
For $x \in \RR$, $[x]_+ := \max\{ x, 0 \}$. For a positive integer $n$, we let $[n] := \{ 1, 2, \dots, n \}$. 
$\RR^n$ denotes the $n$-dimensional real Euclidean space and $\bbS^{n-1}$ is the unit sphere in $\RR^n$. 
We also let $\bS^n$ denote the set of $n \times n$ real symmetric matrices. Given $X \in \bS^n$ and 
$I \subset [n]$, let $X_I \in \bS^{|I|}$ denote the principal submatrix of $X$ with row/column indices in $I$. 
For a matrix $X \in \bS^n$, $\lambda_1(X) \geq \dots \geq \lambda_n(X)$ are the eigenvalues of $X$ 
in descending order. A matrix $X \in \bS^n$ is positive semidefinite, denoted by $X \succeq 0$, 
if $v^T X v \geq 0$ for all $v \in \RR^n$. We let $\bS_+^n := \{ X \in \bS^n: X \succeq 0 \}$. 
The letter $H$ is reserved to indicate the subspace of unit trace: $H = \{ X \in \bS^n: \tr~ X = 1 \}$, 
and $I_n$ denotes the $n \times n$ identity matrix. 
For a cone $\cK \subseteq \bS^n$, its base (translated by $- \frac{1}{n} I_n$) is the compact set defined to be 
$\base{\cK} := \big( \cK \cap H \big) - \frac{1}{n} I_n = \{ X - \frac{1}{n} I_n \in \bS^n: X \in \cK \cap H \}$, 
and we define $\based{\cK} := \base{ \cK^* }$ for notational convenience. 
Given a set $S$, we let $\cl(S)$, $\conv(S)$, and $\cone(S)$ denote the closure, the convex hull, and 
the conical hull of $S$, respectively. Lastly, we let $N(\mu, \Sigma)$ denote the multivariate 
Gaussian distribution with mean $\mu$ and covariance $\Sigma$.

\section{Background}\label{sec:background}
In this section, we review some mathematical preliminaries that are used in our proof of the main theorems. 
Expert readers may want to skip this section and continue reading from Section \ref{sec:notion_approx}.

\subsection{Primer on Convex Geometry}\label{sec:dual}
We recall some basic concepts and results in convex analysis. 
The materials in this section are standard and can be found in classic references; 
we refer the interested readers to \cite{rockafellar1970convex} and \cite{aubrun2017alice} for more details.

\paragraph{Duality}
If $S \subseteq \RR^d$, the polar of $S$ (in $\RR^d$) is the closed convex set
\begin{equation}\label{eqn:polar}
	S^{\circ} := \big\{ y \in \RR^d: \langle x, y \rangle \leq 1 \text{ for all } x \in S \big\}.
\end{equation}
We observe a few properties involving polars. First of all, if $S \subseteq T$, then $S^{\circ} \supseteq T^{\circ}$. 
Next, it is useful to note that $(S \cup T)^{\circ} = S^{\circ} \cap T^{\circ}$ for any $S, T \subseteq \RR^d$, 
and that $(S \cap T)^{\circ} = \cl \conv( S^{\circ} \cup T^{\circ}) $ if $S, T$ are closed, convex, 
and contain the origin. 
Lastly, if $S$ is a closed, convex set that contains the origin, then $(S^{\circ})^{\circ} = S$; this is known as 
the bipolar theorem in convex analysis (see Lemma \ref{lem:mink_supp}).

A nonempty closed convex set $\cK \subset \RR^d$ is called a cone if $\cK$ is invariant under positive scaling, 
i.e., whenever $x \in \cK$ and $t \geq 0$, then $tx \in \cK$. Given a cone $\cK$, its dual cone $\cK^*$ (in $\RR^d$) 
is defined via
\begin{equation}\label{eqn:dual_cone}
	\cK^* := \big\{ y \in \RR^d : \langle x, y \rangle \geq 0 \text{ for all } x \in \cK \big\}.
\end{equation}
Note that $\cK^* = - \cK^{\circ}$. Thus, it follows from the properties of polars that (i) $(\cK^*)^* = \cK$; 
(ii) if $\cK_1 \subseteq \cK_2$, then $\cK_1^* \supseteq \cK_2^*$; and (iii) $\big( \cK_1 \cap \cK_2 \big)^* 
= \cl \cone ( \cK_1 \cup \cK_2 ) $ for two cones $\cK_1, \cK_2$. 

The notion of cone duality is closely related to that of set polarity. 
To clarify the link, we first define a base of a closed convex cone $\cK$. 
Fix a nonzero vector $e \in \RR^d$ and the corresponding affine hyperplane
\[
	H_e := \{ x \in \RR^d: \langle e, x \rangle = \langle e , e \rangle \}.
\]
If $e \in \cK^* \setminus \cK^{\perp}$ where $\cK^{\perp} = \{ v \in \RR^d: \langle v, x \rangle = 0, \forall x \in \cK \}$, 
then we call the set $\cK_e^b := \cK \cap H_e$ as the base of $\cK$ with respect to $e$. The duality of cones 
carries over to a duality of bases as follows.

\begin{lemma}[\cite{aubrun2017alice}, Lemma 1.6]\label{lem:base}
	Let $\cK \subset \RR^d$ be a closed convex cone and $e \in \cK \cap \cK^*$ be a nonzero vector. Then
	\[
		(\cK^*)_e^b = \big\{ y \in H_e : \langle -(y-e), x-e \rangle \leq \langle e, e \rangle \text{ for all }x \in \cK_e^b \big\}.
	\]
	In other words, if we translate $H_e$ so that $e$ becomes the origin, and consider $\cK_e^b$ and $(\cK^*)_e^b$ 
	as subsets of that vector space, then $(\cK^*)_e^b = - \langle e, e \rangle (\cK_e^b)^{\circ}$.
\end{lemma}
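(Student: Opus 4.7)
The plan is to reduce the claimed inequality to a clean polarity statement by direct algebraic manipulation, prove the easy inclusion from the definition of the dual cone, and then recover the dual-cone membership from membership in the polar of the base by a scaling argument combined with a recession limit.

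First I would simplify the bracket on the right-hand side. For $x, y \in H_e$ one has $\langle e, x \rangle = \langle e, y \rangle = \langle e, e \rangle$, so direct expansion gives
\[
\langle -(y-e), x-e \rangle = -\langle y, x \rangle + \langle y, e \rangle + \langle e, x \rangle - \langle e, e \rangle = \langle e, e \rangle - \langle y, x \rangle.
\]
Thus the inequality $\langle -(y-e), x-e \rangle \leq \langle e, e \rangle$ is nothing but $\langle y, x \rangle \geq 0$, and the right-hand side of the claimed identity equals
\[
R := \{ y \in H_e : \langle y, x \rangle \geq 0 \text{ for all } x \in \cK_e^b \}.
\]
The easy inclusion $(\cK^*)_e^b \subseteq R$ is then immediate: if $y \in \cK^* \cap H_e$, then in particular $\langle y, x \rangle \geq 0$ for every $x \in \cK_e^b \subseteq \cK$.

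For the reverse inclusion, I would fix $y \in R$ and show $\langle y, z \rangle \geq 0$ for every $z \in \cK$. Since $e \in \cK^*$, we have $\langle e, z \rangle \geq 0$. In the nondegenerate case $\langle e, z \rangle > 0$, the rescaling $z' := \frac{\langle e, e \rangle}{\langle e, z \rangle} z$ lies in $\cK \cap H_e = \cK_e^b$, and $\langle y, z' \rangle \geq 0$ yields $\langle y, z \rangle \geq 0$ after multiplying by the positive scalar. In the boundary case $\langle e, z \rangle = 0$, the rescaling trick fails, so instead I would use $e \in \cK_e^b$ (which holds because $e \in \cK \cap \cK^*$ and $\langle e, e \rangle = \langle e, e \rangle$) and observe that $e + t z \in \cK$ with $\langle e, e + t z \rangle = \langle e, e \rangle$ for all $t \geq 0$. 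Hence $e + t z \in \cK_e^b$, and $\langle y, e + tz \rangle \geq 0$; dividing by $t$ and letting $t \to \infty$ forces $\langle y, z \rangle \geq 0$. This is the step I expect to be the main subtlety, since it is the only place where the cone structure of $\cK$ (not just its base) is used.

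Finally, the "in other words" reformulation follows by translation: after shifting $H_e$ so that $e$ becomes the origin, the inequality $\langle -(y-e), x-e \rangle \leq \langle e, e \rangle$ becomes $\langle -(y-e)/\langle e, e \rangle, x-e \rangle \leq 1$ for all $x-e \in \cK_e^b - e$, which is exactly the condition $-(y-e)/\langle e,e\rangle \in (\cK_e^b)^\circ$ in the translated space, i.e., $(\cK^*)_e^b = -\langle e, e \rangle (\cK_e^b)^\circ$ as claimed.
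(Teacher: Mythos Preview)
Your proof is correct. The paper itself does not prove this lemma; it merely cites it from \cite{aubrun2017alice}, so there is no in-paper argument to compare against. Your approach---reducing the displayed inequality to $\langle y,x\rangle\ge 0$ via the algebraic simplification, then handling $z\in\cK$ by rescaling onto the base when $\langle e,z\rangle>0$ and by the recession argument $e+tz\in\cK_e^b$ when $\langle e,z\rangle=0$---is the standard elementary route and is fully rigorous. The only minor remark is that in the boundary case you invoke $e\in\cK\cap\cK^*$ to place $e$ in $\cK_e^b$, but in fact only $e\in\cK$ is needed there; the hypothesis $e\in\cK^*$ has already done its work by guaranteeing $\langle e,z\rangle\ge 0$ for the case split.
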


\begin{remark}\label{rem:BHK}
	In this paper, we are concerned with cones $\cK$ such that $\bS_+^n \subseteq \cK \subseteq \bS^n$ and 
	the unit-trace subspace $H$. Note that $H = H_e$ with $e = \frac{1}{n} I_n$. We let $\base{\cK} = \cK_e^b - \frac{1}{n} I_n$ 
	denote the base of $\cK$ with respect to $e = \frac{1}{n} I_n$, translated by $- \frac{1}{n} I_n$ to contain $0$.
\end{remark}

\paragraph{Minkowski Functional and support function}
Let $S$ be a nonempty subset of $\RR^d$. The Minkowski functional (or gauge function) of $S$ is defined 
to be the function $p_S: \RR^d \to [0, \infty]$ valued in the extended real numbers such that
\begin{equation}\label{eqn:minkowski}
	p_S(x) := \inf \{ \lambda \in \RR : \lambda > 0  \text{ and } x \in \lambda S \}.
\end{equation}
We follow the convention that the infimum of the empty set is positive infinity $\infty$. 
The support function of $S$ is defined as $h_S: \RR^d \to [0, \infty]$ such that
\begin{equation}\label{eqn:support}
	h_S(x) := \sup_{z \in S} \langle x, z \rangle.
\end{equation}

There is a duality between the gauge function and the support function. 
In words, the gauge function of a convex set is the support function of its polar, and vice versa. 
\begin{lemma}[\cite{rockafellar1970convex}, Theorem 14.5]\label{lem:mink_supp}
	Let $S$ be a closed convex set containing the origin. 
	Then the polar $S^{\circ}$ is another closed convex set containing the origin, and $(S^{\circ})^{\circ} = S$. 
	Moreover, 
	\[
		p_{S}(x) = h_{S^{\circ}}(x)		\qquad\text{and}\qquad		p_{S^{\circ}}(x) = h_{S}(x).
	\]
\end{lemma}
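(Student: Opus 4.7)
The statement has two parts: the bipolar identity $(S^\circ)^\circ = S$ together with basic properties of $S^\circ$, and the gauge/support duality. I would handle them in that order, using the former to derive the latter.

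For the first part, the set $S^\circ = \bigcap_{x \in S}\{y : \langle x, y \rangle \leq 1\}$ is an intersection of closed half-spaces, so it is closed and convex, and it contains $0$ since $\langle x, 0 \rangle = 0 \leq 1$ trivially. The inclusion $S \subseteq (S^\circ)^\circ$ is also immediate from the definition: for any $x \in S$ and any $y \in S^\circ$, we have $\langle x, y \rangle \leq 1$, which says exactly that $x \in (S^\circ)^\circ$. The converse inclusion is the content of the bipolar theorem. Here I would argue by contrapositive: if $z \notin S$, then since $S$ is closed and convex, the Hahn--Banach separation theorem produces an affine functional strictly separating $\{z\}$ from $S$, i.e., some $y \in \RR^d$ and $\alpha \in \RR$ with $\langle y, x \rangle \leq \alpha < \langle y, z \rangle$ for all $x \in S$. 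Using $0 \in S$, we get $\alpha \geq 0$, and after rescaling $y$ by $1/\max(\alpha, \epsilon)$ (handling the case $\alpha = 0$ by a small perturbation) we obtain $y' \in S^\circ$ with $\langle y', z \rangle > 1$, so $z \notin (S^\circ)^\circ$. This yields $(S^\circ)^\circ \subseteq S$.

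For the second part, I would prove $p_S(x) = h_{S^\circ}(x)$ directly from the definitions plus the bipolar identity. The inequality $h_{S^\circ}(x) \leq p_S(x)$ follows by writing $x = \lambda s$ with $s \in S$ and $\lambda > p_S(x) - \delta$: then for every $y \in S^\circ$, $\langle x, y \rangle = \lambda \langle s, y \rangle \leq \lambda$, so $h_{S^\circ}(x) \leq \lambda$, and $\delta \to 0$ finishes. For the reverse inequality, fix any $\lambda < p_S(x)$, so $x/\lambda \notin S$. By the bipolar theorem, $x/\lambda \notin (S^\circ)^\circ$, which exhibits some $y \in S^\circ$ with $\langle x/\lambda, y \rangle > 1$, hence $h_{S^\circ}(x) \geq \langle x, y \rangle > \lambda$; letting $\lambda \uparrow p_S(x)$ gives $h_{S^\circ}(x) \geq p_S(x)$. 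The companion identity $p_{S^\circ}(x) = h_S(x)$ is then obtained by applying what we just proved with $S$ replaced by $S^\circ$ and invoking $(S^\circ)^\circ = S$.

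The main obstacle is the separating-hyperplane step in the bipolar argument, specifically the edge case $\alpha = 0$ where a naive rescaling would blow up; one resolves it either by noting that $\langle y, z \rangle > 0$ permits choosing a small positive $\alpha$ after shifting, or by separating $z$ from $S \cup \{0\}$ (still closed and convex) together with a continuity argument. Every other step is essentially bookkeeping around the definitions of $p_S$ and $h_{S^\circ}$.
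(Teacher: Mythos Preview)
The paper does not prove this lemma; it is quoted as a standard result from Rockafellar's \emph{Convex Analysis} and used as background. So there is no ``paper's own proof'' to compare against.

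Your argument is the standard one and is essentially correct. Two small points. First, in the step $h_{S^\circ}(x)\le p_S(x)$ you write ``$x=\lambda s$ with $s\in S$ and $\lambda>p_S(x)-\delta$''; what you need is $\lambda<p_S(x)+\delta$ (a $\lambda$ just above $p_S(x)$ with $x\in\lambda S$), so that $h_{S^\circ}(x)\le\lambda\to p_S(x)$ as $\delta\to0$. As written, the inequality on $\lambda$ gives no upper control. Second, the $\alpha=0$ edge case in the separation step is simpler than you make it: if $\langle y,x\rangle\le0$ for all $x\in S$ and $\langle y,z\rangle>0$, just take $y'=cy$ with $c>1/\langle y,z\rangle$; then $y'\in S^\circ$ and $\langle y',z\rangle>1$. (Your fallback suggestion of separating $z$ from $S\cup\{0\}$ is moot, since $0\in S$ by hypothesis.)
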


\paragraph{Mean Width}
Given a nonempty, bounded set $S \subset \RR^d$, we define the mean width of $S$ as 
the average of $h_S( u )$ with $u$ distributed uniformly over the unit sphere in the ambient space:
\[
	w(S) := \int_{\bbS^{d-1}} h_S(u) d\sigma(u),
\]
where $\bbS^{d-1}$ is the unit sphere in $\RR^d$ and $\sigma$ is the normalized Haar measure on $\bbS^{d-1}$ 
(uniform probability measure on $\bbS^{d-1}$).
It is often convenient to consider the Gaussian variant of the mean width because its value does not depend on the ambient dimension. 

\begin{definition}[Gaussian width]\label{defn:gaussian_width}
	For any nonempty bounded set $S \subset \RR^d$, the Gaussian (mean) width 
	of $S$ is defined as
	\[
		w_G(S):= \bbE_g  h_S(g) 
			= \bbE_g   \left[ \sup_{x \in S} \inner{g}{x} \right]
			= \frac{1}{(2\pi)^{d/2}} \int_{\RR^d} \sup_{x \in S} \inner{z}{x} \exp(-\|z\|^2 / 2) dz.
	\]
	where $g$ is a standard Gaussian random vector in $\RR^d$. 
\end{definition}
\noindent
It is easy to verify that $w_G(S) = \kappa_d w(S)$ where $\kappa_d := \bbE_g \| g \|_2 
= \frac{\sqrt{2}\Gamma\left((d+1)/2\right)}{\Gamma(d/2)}$. Note that $\kappa_d$ depends only on $d$ 
and is of order $\sqrt{d}$ (it is known that $\sqrt{d - 1/2} \leq \kappa_d \leq \sqrt{d - d/(2d+1)}$).

The Gaussian width has many nice properties. Here we list a few of them that we use in later sections.
\begin{enumerate}
	\item
	The Gaussian width does not depend on the ambient dimension.
	\item
	The Gaussian width is invariant under translation and rotation.
	\item
	If $S \subseteq S'$, then $w_G(S) \leq w_G(S')$.
\end{enumerate}

\paragraph{Urysohn's Inequality}
Given a bounded measurable set $S \subset \RR^d$, its volume radius is defined as
\[
	\vrad(S) := \bigg( \frac{\vol(S)}{\vol(B_d^2)} \bigg)^{1/d}
\]
where $B_d^2$ is the unit $d$-dimensional Euclidean ball. The volume radius of $S$ is the radius of the Euclidean ball 
that has the same volume as $S$. 

A set $K \subset \RR^d$ is a convex body if it is a convex, compact set with nonempty interior. 
The following inequality, known as Urysohn's inequality, states that the mean width 
is minimized for Euclidean balls, among the sets that have the same volume.

\begin{lemma}[Urysohn's inequality; \cite{aubrun2017alice}, Propositions 4.15 \& 4.16]\label{lem:urysohn}
	Let $K \subset \RR^d$ be a convex body containing the origin in its interior. Then
	\[
		\frac{1}{w(K^{\circ})} \leq \vrad(K) \leq w(K).
	\]
\end{lemma}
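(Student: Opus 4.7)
The plan is to prove the two bounds separately, since they call for rather different tools. For both it is convenient to parametrize $K$ by its radial function $r_K(u) := \sup\{\rho \geq 0 : \rho u \in K\}$, which is well-defined, finite, and positive on $\bbS^{d-1}$ because $K$ is a convex body containing $0$ in its interior. Throughout, let $U$ denote a uniformly distributed random unit vector on $\bbS^{d-1}$.

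I would start with the left inequality $\frac{1}{w(K^\circ)} \leq \vrad(K)$, which admits an elementary proof via Jensen's inequality. By Lemma \ref{lem:mink_supp}, $h_{K^\circ}(u) = p_K(u)$, and for $u \in \bbS^{d-1}$ the Minkowski gauge simplifies to $p_K(u) = 1/r_K(u)$; hence $w(K^\circ) = \bbE[1/r_K(U)]$. Integration in polar coordinates on the other hand yields $\vol(K) = \vol(B_d^2) \cdot \bbE[r_K(U)^d]$, so $\vrad(K) = (\bbE[r_K(U)^d])^{1/d}$. Two applications of Jensen's inequality now finish the job: convexity of $x \mapsto x^d$ on $[0,\infty)$ gives $\vrad(K) \geq \bbE[r_K(U)]$, while convexity of $x \mapsto 1/x$ on $(0,\infty)$ gives $w(K^\circ) \geq 1/\bbE[r_K(U)]$. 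Chaining these two inequalities produces $w(K^\circ) \geq 1/\vrad(K)$, which is the desired bound.

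The right inequality $\vrad(K) \leq w(K)$ is Urysohn's inequality proper, and is where the real work lies; the Jensen argument breaks down because one only controls $h_K(U) \geq r_K(U)$ pointwise, which bounds $\bbE[r_K(U)]$ but not $(\bbE[r_K(U)^d])^{1/d}$. I would prove it via Steiner symmetrization. For any hyperplane $H$ through the origin, let $S_H K$ denote the Steiner symmetrization of $K$ with respect to $H$; then $\vol(S_H K) = \vol(K)$ is immediate from the definition, and a careful analysis of how the support function transforms under symmetrization shows that $w(S_H K) \leq w(K)$. Iterating along a well-chosen sequence of hyperplanes and invoking Blaschke's selection theorem yields a sequence of convex bodies converging in Hausdorff distance to a Euclidean ball $B^*$ of the same volume as $K$; by continuity of $w$ and $\vrad$ in Hausdorff distance, one obtains $w(K) \geq w(B^*) = \vrad(B^*) = \vrad(K)$.

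The main obstacle is the monotonicity $w(S_H K) \leq w(K)$: one must express the support function of $S_H K$ as a specific combination of the values of $h_K$ at two reflected directions and then check that its spherical integral does not exceed that of $h_K$. An attractive alternative that sidesteps the symmetrization analysis is to appeal directly to the Alexandrov--Fenchel inequality for quermassintegrals, which gives $W_{d-1}(K)^d \geq \vol(K) \cdot \vol(B_d^2)^{d-1}$; combined with the identity $W_{d-1}(K) = \vol(B_d^2) \cdot w(K)$, this rearranges immediately to $w(K) \geq \vrad(K)$. Since Urysohn's inequality is a well-documented classical result, either route can be executed (or cited) without undue digression.
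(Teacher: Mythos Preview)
The paper does not actually prove this lemma; it is stated as a background result with a citation to Aubrun--Szarek (\cite{aubrun2017alice}, Propositions 4.15 \& 4.16) and no argument is given. So there is no ``paper's own proof'' to compare against.

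Your outline is correct. The left inequality $1/w(K^\circ)\leq \vrad(K)$ follows exactly as you say: with $r_K$ the radial function, polar coordinates give $\vrad(K)=\big(\bbE[r_K(U)^d]\big)^{1/d}$, Lemma~\ref{lem:mink_supp} gives $w(K^\circ)=\bbE[1/r_K(U)]$, and two applications of Jensen (for $x\mapsto x^d$ and $x\mapsto 1/x$) chain to the desired bound. For the right inequality $\vrad(K)\leq w(K)$, both routes you propose are standard; the Alexandrov--Fenchel route via $W_{d-1}(K)^d \geq \vol(K)\,\vol(B_d^2)^{d-1}$ together with $W_{d-1}(K)=\vol(B_d^2)\,w(K)$ is the cleaner of the two, as you note, and avoids having to verify the monotonicity of the mean width under Steiner symmetrization (which, while true, is itself nontrivial and is essentially what you flag as ``the main obstacle''). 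Since the paper treats Urysohn's inequality as a quotable classical fact, either argument---or simply a citation---is appropriate here.
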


\subsection{Lifts, Extension Complexity and Slack Operator}\label{sec:xc}

Here we briefly review the $\cK$-extension complexity of a convex body and its connection to the $\cK$-rank of its slack operator. 
We refer interested readers to \cite{gouveia2013lifts} and \cite{fawzi2020lifting} for more details.

Let $\cK$ be a closed convex cone. Given a positive integer $r$, let $\cK^r = \cK \times \dots \times \cK$ ($r$ times) denote 
the Cartesian product of $r$ copies of $\cK$. We say that a set $S \subset \RR^d$ admits a $\cK^r $-lift if $S$ can be expressed as
\[
	S = \pi \big( \cK^r \cap L \big)
\]
where $\pi$ is a linear map and $L$ is an affine subspace. The convex set $\cK^r \cap L$ is called a $\cK^r$-lift of $S$. 
The $\cK$-extension complexity of $S$, denoted by $\xc_{\cK}(S)$, is defined as the smallest $r$ such that $S$ admits a $\cK^r $-lift.

Let $P, Q$ be two convex bodies such that $P \subseteq Q \subseteq \RR^d$ and the origin is contained in the interior of $P$. 
Let $Q^{\circ}$ be the polar of $Q$; see \eqref{eqn:polar}. 
We let $\ext(P)$ denote the set of extreme points of $P$ and define the slack operator $s_{P,Q}$ for $(P,Q)$ as follows.

\begin{definition}[slack operator]
	For a pair of convex bodies $P \subseteq Q$ with $0$ in the interior of $P$, the map $s_{P, Q}: \ext(P) \times \ext(Q^{\circ}) \to \RR$ 
	such that $s_{P, Q}(x,y) = 1 - \langle x, y \rangle$ is called its associated slack operator. 
	The slack operator $s_{P,Q}$ admits a $\cK$-factorization if there exists a pair of maps $A: \ext(P) \to \cK$ and 
	$B: \ext(Q^{\circ}) \to \cK^*$ such that $s_{P,Q}(x,y) = \inner{A(x)}{B(y)}$ for all $x \in \ext(P)$ and $y \in \ext(Q^{\circ})$. 
\end{definition}
\noindent
Note that $s_{P, Q}(x,y) \geq 0$ for all $(x, y) \in \ext(P) \times \ext(Q^{\circ})$ because $P \subseteq Q$ and therefore 
$\langle x, y \rangle \leq 1$ for all $(x,y) \in P \times Q^{\circ}$ by definition of the polar.

The existence of a $\cK$-lift of a convex body $S$ is closely related to that of a $\cK$-factorization of $s_{P,Q}$ for 
some convex bodies $P,Q$ such that $P \subseteq S \subseteq Q$. This connection is originally established by Yannakakis \cite{yannakakis1991expressing} 
for the special case with $\cK = \RR_+$, motivated by computational considerations about linear programming (LP). 
This special case of the $\RR_+$-extension complexity is widely known as the LP extension complexity (or the extension complexity of polytopes), 
which counts the minimum number of linear inequalities required to describe $S$.  If $ \xc_{\RR_+}(S) = N$, then one can optimize a linear function 
on $S$ by solving a linear program with $N$ inequality constraints. 
Note that a polytope is generated by a finite number of extreme points, and thus its slack operator is a nonnegative matrix (so called slack matrix). 
Yannakakis' theorem states that the LP extension complexity of a polytope is equal to the nonnegative rank of its slack matrix.

The Yannakakis' theorem is later generalized in \cite{gouveia2013lifts}. We state a generalized version of Yannakakis theorem in the next 
lemma (cf. \cite[Proposition 3.12]{fawzi2020lifting}), which immediately follows from the proof of \cite[Theorem 3]{gouveia2013lifts}.

\begin{lemma}\label{lem:gen_yannakakis}
	Let $P, Q$ be a pair of convex bodies such that $P \subseteq Q$. If there is a convex body $S$ such that 
	$S$ admits a proper $\cK$-lift and $P \subseteq S \subseteq Q$, then $s_{P,Q}$ has a $\cK$-factorization. 
	Conversely, if $s_{P,Q}$ admits a $\cK$-factorization, then there exists a convex set $S$ such that 
	$S$ has a $\cK$-lift and $P \subseteq S \subseteq Q$.
\end{lemma}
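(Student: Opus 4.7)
My plan is to prove the two directions separately, combining conic duality with Krein--Milman to pass between extreme points and linear functionals.

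For the forward direction, assume $S = \pi(\cK \cap L)$ with $\pi: \RR^N \to \RR^d$ linear and $L \subset \RR^N$ affine, and $P \subseteq S \subseteq Q$. For each $x \in \ext(P) \subseteq S$, choose a preimage $A(x) \in \cK \cap L$ with $\pi(A(x)) = x$; this defines $A : \ext(P) \to \cK$. Given $y \in \ext(Q^{\circ})$, the affine function $\ell_y(\hat z) := 1 - \langle y, \pi(\hat z) \rangle$ is nonnegative on $\cK \cap L$, since $\pi(\hat z) \in S \subseteq Q$ forces $\langle y, \pi(\hat z)\rangle \leq 1$. After homogenizing so that constants become linear, I would apply a conic Farkas-type lemma to this nonnegativity statement to produce $B(y) \in \cK^*$ together with a multiplier $\lambda_y$ such that $\ell_y(\hat z) = \langle B(y), \hat z\rangle + \lambda_y^T(\text{residual defining } L)$ on all of $\RR^N$. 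The residual vanishes on $L$, so evaluating at $\hat z = A(x) \in L$ yields $1 - \langle x, y\rangle = \langle A(x), B(y)\rangle$, which is the desired $\cK$-factorization.

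For the converse, suppose $A : \ext(P) \to \cK$ and $B : \ext(Q^{\circ}) \to \cK^*$ satisfy $\langle A(x), B(y)\rangle = 1 - \langle x, y\rangle$. I would construct a lift explicitly in $\RR^N \times \RR^d$ using the cone $\cK \times \RR^d$ (the $\RR^d$ factor serves as free auxiliary variables). Let $L \subseteq \RR^N \times \RR^d$ be the affine subspace defined by the equations $\langle B(y), \hat z\rangle + \langle y, z\rangle = 1$ for every $y \in \ext(Q^{\circ})$, and set $\hat S := (\cK \times \RR^d) \cap L$ with projection $\pi(\hat z, z) := z$. Then $S := \pi(\hat S)$ admits a $\cK$-lift by construction. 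The inclusion $P \subseteq S$ is immediate, because for each $x \in \ext(P)$ the pair $(A(x), x)$ lies in $\hat S$ by the factorization identity, so $\ext(P) \subseteq S$; convexity and Krein--Milman give $P \subseteq S$. The inclusion $S \subseteq Q$ follows because any $z \in S$ admits some $\hat z \in \cK$ with $\langle y, z\rangle = 1 - \langle B(y), \hat z\rangle \leq 1$ for every $y \in \ext(Q^{\circ})$; since $\ext(Q^{\circ})$ generates $Q^{\circ}$ via Krein--Milman, bipolarity gives $z \in (Q^{\circ})^{\circ} = Q$.

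The main obstacle I anticipate is the conic-duality step in the forward direction. Without care, there could be a duality gap, in which case the multiplier $B(y)$ would not land in $\cK^*$. I expect the \emph{proper} modifier on the $\cK$-lift in the hypothesis is precisely a regularity condition (for example, strict feasibility of $\cK \cap L$, or $\cK \cap L$ meeting the relative interior of $\cK$) that closes this gap; otherwise a standard perturbation-and-take-closure argument should recover the claim. A secondary technical point is handling the additive constant that arises from the conic Farkas argument when the primal optimum is strictly positive; the homogenization trick (introducing one auxiliary coordinate fixed to $1$) absorbs this constant cleanly into $B(y)$ via the cone $\cK \times \RR_+$.
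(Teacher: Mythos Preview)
Your outline matches the argument of Gouveia--Parrilo--Thomas, which is precisely what the paper cites in lieu of its own proof. The converse is fine modulo a cosmetic point: your lift sits in $\cK \times \RR^d$ rather than $\cK$, but since $\ext(Q^\circ)$ spans $\RR^d$, the defining equations of $L$ determine $z$ as an affine function of $\hat z$ and the extra coordinates can be eliminated.

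In the forward direction there is a real wrinkle in your handling of the additive constant. Writing $L = \{\hat z : C\hat z = d\}$, conic duality applied to ``$\ell_y \geq 0$ on $\cK \cap L$'' produces (under properness) a dual certificate $\mu^*$ with $C^*\mu^* - \pi^* y \in \cK^*$ and $\langle d, \mu^*\rangle = h_S(y)$; setting $B(y) = C^*\mu^* - \pi^*y$ then yields $\langle A(x), B(y)\rangle = h_S(y) - \langle x, y\rangle$, which misses $s_{P,Q}(x,y) = 1 - \langle x, y\rangle$ by exactly the slack $1 - h_S(y) \geq 0$, and this slack is strictly positive whenever $S \subsetneq Q$. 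Your homogenization absorbs it only by enlarging the cone to $\cK \times \RR_+$, giving a $(\cK \times \RR_+)$-factorization rather than the $\cK$-factorization the lemma asserts. The correct repair stays inside $\cK$: because $S$ is a convex body, the dual certificates for $\sup_{z \in S} \langle c, z\rangle$ and $\sup_{z \in S} \langle -c, z\rangle$ (for any fixed $c \neq 0$) sum to some $\mu_0$ with $C^*\mu_0 \in \cK^*$ and $\langle d, \mu_0\rangle$ equal to the width of $S$ in direction $c$, hence strictly positive. Then $\mu := \mu^* + t\mu_0$ with $t = (1 - h_S(y))/\langle d, \mu_0\rangle \geq 0$ is still dual-feasible and has $\langle d, \mu\rangle = 1$, so $B(y) := C^*\mu - \pi^*y \in \cK^*$ gives $\langle A(x), B(y)\rangle = 1 - \langle x, y\rangle$ on the nose.
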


In this paper, we are interested in the case where $\cK$ is a Cartesian product of small PSD cones, $\bS_+^k$ where $k \geq 1$ is a fixed constant. 
We define the $\bS_+^k$-extension complexity of $S$, denoted by $\xc_{\bS_+^k}(S)$, to be the smallest integer $r$ such that $S$ admits a $(\bS_+^k)^r$-lift. 
Given a nonnegative operator $s$, we define $\rank_{\bS_+^k}(s)$ to be the least $r$ such that $s$ admits a $(\bS_+^k)^r$-factorization.
As a consequence of Lemma \ref{lem:gen_yannakakis}, we obtain
\begin{equation}\label{eqn:yannakakis}
	\inf_{S: P \subseteq S \subseteq Q} \xc_{\bS_+^k}(S) = \rank_{\bS_+^k}(s_{P,Q}).
\end{equation}
We remark that if $\xc_{\bS_+^k}(S) = N$, then one can optimize a linear function on $S$ by solving an SDP involving $N$ variables in $\bS_+^k$.

\subsection{Fourier Analysis on the Hypercube and Hypercontractivity}
Later in the proof of Theorems \ref{thm:main.1} and \ref{thm:main.2}, we consider a certain slack operator restricted on the $n$-dimensional hypercube 
and use its degree-2 Fourier component to prove extension complexity lower bounds. Specifically, we will need to control the norm of the degree-2 
Fourier component. We review the necessary notions here and refer the interested readers to a more comprehensive reference, 
e.g., \cite{o2014analysis}.

Let $H_n = \{ -1, 1 \}^n$ denote the vertex set of the $n$-dimensional hypercube. Every function $f: H_n \to \RR$ has a unique Fourier expansion
\[
	f = f_0 + f_1 + \cdots + f_n
\]
where each $f_k$ is a homogeneous multilinear polynomial of degree $k$. We call $f_k$ the $k$-th harmonic component of $f$ and 
let $\proj_k : f \mapsto f_k$ denote the projection onto the degree-$k$ harmonic subspace (the subspace of homogeneous polynomials of degree $k$).

Given $\rho \in [0,1]$, the noise operator $\Trho$ smooths $f: H_n \to \RR$, by attenuating its high-frequency modes. To be precise, 
$\Trho$ acts on $f$ multiplying the $k$-th Fourier coefficient of $f$ by $\rho^k$, i.e.,
\[
	\Trho f = \sum_{k=0}^n \rho^k f_k.
\]
For $\rho < 1$, $\Trho f$ is `smoother' than $f$ as the high-frequency terms of $f$ are diminished. In one extreme, $\Trho f$ is constant equal to 
$\bbE f$ when $\rho = 0$; in the other extreme where $\rho = 1$, there is no smoothing and $\Trho f = f$.

Next, we recall that the $p$-norm ($p \geq 1$) of $f: H_n \to \RR$ is defined as 
\[
	\| f \|_p = \Big( \bbE_{x \sim \mu(H_n)}\big[ |f(x)|^p \big] \Big)^{\frac{1}{p}}.
\]
where $\mu(H_n)$ denotes the uniform probability measure over $H_n$. Note that $\| f \|_p \leq \| f \|_q$ for $p \leq q$. When $ p < q$, 
there is no general way to control $\| f \|_q$ with $\| f \|_p$, and the ratio $\|f\|_q / \| f \|_p$ can be arbitrarily large; the ratio becomes larger 
as $f$ fluctuates more wildly. 

The hypercontractive inequality for $\Trho$ due to Bonami and Beckner \cite{bonami1970etude, beckner1975inequalities} 
provides an upper bound on $\| \Trho f \|_q$ in terms of $\| f \|_p$ with $p < q$, thereby giving an estimate for how much 
smoother $\Trho f$ is, when compared to $f$. It can be stated as follows.

\begin{lemma}[Hypercontractivity]\label{lemma:hypercontractive}
	Given $f: H_n \to \RR$, for any $0 < \rho \leq 1$ and $p \geq 1$, we have
	$\| \Trho f \|_q \leq \| f \|_p$ where $q = 1 + \frac{1}{\rho^2} (p-1)$.
\end{lemma}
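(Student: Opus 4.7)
The plan is to prove the claim by induction on $n$, with the single-variable ``two-point inequality'' as the base case. The key structural fact that enables the induction is that the noise operator factors as $\Trho = \Trho^{(1)} \otimes \cdots \otimes \Trho^{(n)}$, where $\Trho^{(i)}$ acts only on the $i$-th coordinate; together with the relation $\rho^2(q-1) = p - 1$, which makes the one-variable statement tight at appropriate affine functions, this will allow a clean tensorization.

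For the tensorization step I would write $H_n = H_{n-1} \times H_1$ and decompose any $f : H_n \to \RR$ as $f(x', x_n) = f_0(x') + x_n f_1(x')$, so that $\Trho f(x', x_n) = \Trho^{(n-1)} f_0(x') + \rho\, x_n\, \Trho^{(n-1)} f_1(x')$, where $\Trho^{(n-1)}$ is the noise operator on $H_{n-1}$. Fixing $x'$ and applying the one-variable case to the affine function $y \mapsto \Trho^{(n-1)} f_0(x') + y\, \Trho^{(n-1)} f_1(x')$ of $y \in H_1$ yields
\[
\bbE_{x_n} \big| \Trho f(x', x_n) \big|^q
\leq \Big( \bbE_{x_n} \big| \Trho^{(n-1)} f(\cdot, x_n)(x') \big|^p \Big)^{q/p},
\]
where I have used the identity $\Trho^{(n-1)} f_0(x') + x_n \Trho^{(n-1)} f_1(x') = \Trho^{(n-1)} f(\cdot, x_n)(x')$. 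Taking $\bbE_{x'}$ on both sides and applying Minkowski's integral inequality with exponent $q/p \geq 1$ to swap the order of the two integrals, one obtains
\[
\bbE_{x'} \Big( \bbE_{x_n} G \Big)^{q/p}
\leq \Big( \bbE_{x_n} \big( \bbE_{x'} G^{q/p} \big)^{p/q} \Big)^{q/p}
\]
with $G(x', x_n) = |\Trho^{(n-1)} f(\cdot, x_n)(x')|^p$. The inner factor $\big( \bbE_{x'} G^{q/p} \big)^{p/q}$ equals $\| \Trho^{(n-1)} f(\cdot, x_n) \|_q^p$, which by the inductive hypothesis is at most $\| f(\cdot, x_n) \|_p^p$. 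Substituting and simplifying collapses the right-hand side to $\|f\|_p^q$, as required.

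The main obstacle is the base case $n=1$, which reduces to the two-point inequality
\[
\Big( \tfrac{1}{2} |a + \rho b|^q + \tfrac{1}{2} |a - \rho b|^q \Big)^{1/q}
\leq
\Big( \tfrac{1}{2} |a + b|^p + \tfrac{1}{2} |a - b|^p \Big)^{1/p}
\]
for all $a, b \in \RR$. By the sign symmetries $a \mapsto -a$ and $b \mapsto -b$ and by positive homogeneity, I may assume $a = 1$ and $b \in [0, 1]$. To establish the inequality in the reduced form, I would define
\[
\Phi(b) := \tfrac{1}{p} \log \bbE_y |1 + by|^p - \tfrac{1}{q} \log \bbE_y |1 + \rho b y|^q,
\]
observe that $\Phi(0) = 0$, and expand $\Phi$ as a power series in $b$. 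The identity $\rho^2(q-1) = p - 1$ is precisely what forces the second-order coefficient of $\Phi$ to vanish, so the inequality is tight to second order at $b = 0$; the delicate step is then verifying that every higher-order coefficient of $\Phi$ is nonnegative, which can be done by a careful term-by-term comparison of the binomial expansions of $|1 + by|^p$ and $|1 + \rho b y|^q$ against the logarithmic expansion. Because tightness at second order rules out any crude bound, this coefficient comparison is genuinely the heart of the argument. An alternative, technically cleaner route would be to deduce the two-point inequality from the logarithmic Sobolev inequality on $H_1$ via the Gross equivalence between log-Sobolev inequalities and hypercontractivity.
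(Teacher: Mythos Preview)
The paper does not prove Lemma~\ref{lemma:hypercontractive}; it simply quotes the inequality and attributes it to Bonami and Beckner, directing the reader to \cite{o2014analysis} for details. So there is no ``paper's own proof'' to compare against here.

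Your sketch is the classical Bonami tensorization argument, and the inductive step is correctly laid out: the factorization $\Trho = \Trho^{(n-1)} \otimes \Trho^{(1)}$, the one-variable inequality applied coordinatewise, and Minkowski's integral inequality with exponent $q/p \geq 1$ to swap the two expectations. This is exactly the proof one finds in the reference \cite{o2014analysis} that the paper points to.

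One small gap in your base case: the reduction ``by positive homogeneity and the sign symmetries I may assume $a = 1$ and $b \in [0,1]$'' is not complete. Homogeneity and the symmetries $a \mapsto -a$, $b \mapsto -b$ let you normalize to $a \geq 0$, $b \geq 0$, and then to $a = 1$ (the case $a = 0$ being trivial since $\rho \leq 1$), but they do \emph{not} by themselves force $b \leq 1$; the two-point inequality is not symmetric under $a \leftrightarrow b$. The case $b > 1$ (equivalently, $f$ changing sign on $H_1$) requires a separate short argument---either a direct check or the observation that replacing $f$ by $|f|$ can only increase the right side while leaving the left side alone after one controls $\Trho |f| \geq |\Trho f|$. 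This is handled in the standard references, so it is a gap in the write-up rather than in the strategy.
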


We use Lemma \ref{lemma:hypercontractive} to control the norm of the degree-2 harmonic component of a bounded nonnegative function 
as stated below in Lemma \ref{lemma:harmonic}, following \cite[Lemma 2.3]{regev2011quantum} and \cite[Lemma 3]{fawzi2018polyhedral}. 
Its proof is included in Appendix \ref{sec:proof_lemma} for completeness.
\begin{lemma}\label{lemma:harmonic}
	Let $f: H_n \to \RR$ satisfy (i) $0 \leq f(x) \leq \Lambda$ for all $x \in H_n$; and (ii) $\bbE_{x \sim \mu(H_n)} [ f(x) ] \leq 1$. 
	Then 
	\[
		\| \proj_2 f \|_2 \leq 
			\begin{cases}
				\Lambda			& \text{if }\Lambda < e,\\
				e \log( \Lambda )	& \text{if }\Lambda \geq e.
			\end{cases}
	\]
\end{lemma}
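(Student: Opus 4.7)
The plan is to use hypercontractivity (Lemma \ref{lemma:hypercontractive}) together with log-convexity of $L^p$ norms, splitting into the two cases $\Lambda < e$ and $\Lambda \geq e$.

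For the easy case $\Lambda < e$, no hypercontractivity is needed. Since the projection onto the degree-$2$ harmonic subspace is an orthogonal projection in $L^2(\mu(H_n))$, we have $\|\proj_2 f\|_2 \leq \|f\|_2$. Using $0 \leq f \leq \Lambda$ and $\bbE f \leq 1$, one obtains $\|f\|_2^2 = \bbE[f^2] \leq \Lambda \cdot \bbE f \leq \Lambda$, so $\|\proj_2 f\|_2 \leq \sqrt{\Lambda}$. When $\Lambda \leq 1$ this is at most $\Lambda$ via $\|\proj_2 f\|_2 \leq \|f\|_\infty \leq \Lambda$; when $1 \leq \Lambda < e$, one has $\sqrt{\Lambda} \leq \Lambda$. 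Either way the desired bound follows.

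For the main case $\Lambda \geq e$, the key observation is that $T_\rho$ acts on the degree-$2$ harmonic subspace as multiplication by $\rho^2$, so $\proj_2(T_\rho f) = \rho^2 \proj_2 f$. Combining this with the contractivity of $\proj_2$ in $L^2$ gives
\[
    \|\proj_2 f\|_2 \;=\; \rho^{-2}\, \|\proj_2(T_\rho f)\|_2 \;\leq\; \rho^{-2}\, \|T_\rho f\|_2.
\]
Now I apply Lemma \ref{lemma:hypercontractive} with $q = 2$, which forces $p = 1 + \rho^2$, yielding $\|T_\rho f\|_2 \leq \|f\|_{1+\rho^2}$. To bound the right-hand side, I interpolate between $\|f\|_1 \leq 1$ and $\|f\|_\infty \leq \Lambda$ using log-convexity of $L^p$ norms:
\[
    \|f\|_{1+\rho^2} \;\leq\; \|f\|_1^{1/(1+\rho^2)}\, \|f\|_\infty^{\rho^2/(1+\rho^2)} \;\leq\; \Lambda^{\rho^2/(1+\rho^2)}.
\]
Hence $\|\proj_2 f\|_2 \leq \rho^{-2}\, \Lambda^{\rho^2/(1+\rho^2)}$.

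The remaining step is to tune $\rho$ optimally. The natural choice is $\rho^2 = 1/\log \Lambda$, which is in $(0, 1]$ precisely because $\Lambda \geq e$. With this choice, $\rho^{-2} = \log \Lambda$, and using the crude bound $\rho^2/(1+\rho^2) \leq \rho^2$ gives $\Lambda^{\rho^2/(1+\rho^2)} \leq \Lambda^{1/\log \Lambda} = e$. Putting the two pieces together yields $\|\proj_2 f\|_2 \leq e \log \Lambda$, as required. The main obstacle is simply remembering to handle the trivial regime $\Lambda < e$ separately (where the hypercontractive argument would require $\rho > 1$, which is outside the scope of Lemma \ref{lemma:hypercontractive}) and to verify that the clean choice $\rho^2 = 1/\log \Lambda$ is both feasible and essentially optimal up to constants; no genuine difficulty arises beyond these bookkeeping points.
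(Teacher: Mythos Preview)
Your proof is correct and follows essentially the same approach as the paper: bound $\|\proj_2 f\|_2 \leq \rho^{-2}\|T_\rho f\|_2$, apply hypercontractivity with $q=2$ and $p=1+\rho^2$, interpolate $\|f\|_p$ between $\|f\|_1\le 1$ and $\|f\|_\infty\le\Lambda$, and set $\rho^2=1/\log\Lambda$. The only cosmetic difference is the $\Lambda<e$ case, where the paper simply takes $p=2$ in the same framework (so $\rho=1$ and the bound $\frac{1}{p-1}\Lambda^{p-1}=\Lambda$ drops out uniformly), whereas you argue directly via $\|\proj_2 f\|_2\le\|f\|_2$ and split further at $\Lambda=1$; both are fine.
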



\subsection{Some Useful Facts about (Sub-)Gaussians}
Here we collect a few facts about Gaussians that are useful to control the fluctuation of Gaussian processes. 
These are standard results and more details can be found in references such as \cite{boucheron2013concentration}, 
\cite{vershynin2018high} and \cite{aubrun2017alice}.

\subsubsection{Gaussian Random Matrices and Sub-gaussian Random Variables}
\paragraph{Standard Gaussian Distribution in $\bS^n$}
Recall that the space $\bS^n$ of real symmetric $n \times n$ matrices can be viewed as real Euclidean space 
of dimension ${n+1 \choose 2}$ equipped with the trace inner product $\langle A, B \rangle = \tr (AB)$. 
We define the standard Gaussian distribution in $\bS^n$ via the natural isomorphism between $\bS^n$ and $\RR^{n+1 \choose 2}$.

\begin{definition}\label{def:std_Gauss}
	A random matrix $A \in \bS^n$ has the standard Gaussian distribution 
	if the random variables $(a_{ij})_{1 \leq i \leq j \leq n}$ are independent, with $a_{ii} \sim N(0,1)$ 
	and $a_{ij} \sim N(0,1/2)$ for $i < j$.
\end{definition}

Note that $A$ is a standard Gaussian vector in the space $\bS^n$ if and only if $\sqrt{2}A$ is a $\GOE(n)$ 
(Gaussian Orthogonal Ensemble) matrix, cf. \cite[Section 6.2]{aubrun2017alice}. 
The GOE has the property of orthogonal invariance, i.e., if $A \in \bS^n$ is a $\GOE(n)$ matrix, then 
for any fixed orthogonal matrix $U \in O(n)$, the random matrix $U A U^T$ is also a $\GOE(n)$ matrix.

\paragraph{Sub-Gaussian and Sub-exponential Random Variables}
Many interesting properties of Gaussian random variables are due to the fast decaying tail probabilities. 
Such properties are shared by some of non-Gaussian random variables, so called the class of sub-Gaussian 
random variables. This notion can be formalized based on the moment-generating function $ \bbE[e^{\lambda X}]$:
\begin{definition}\label{def:subG}
	A random variable $X$ is sub-Gaussian with parameter $v > 0$ if $\bbE[X] = 0$ and 
	\[
		\bbE[e^{\lambda X}] \leq \exp\Big(\frac{\lambda^2}{2} v\Big),	\quad\forall \lambda \in \RR.
	\]
\end{definition}
\begin{definition}\label{def:subE}
	A random variable $X$ is sub-exponential with parameters $v, c > 0$ if $\bbE[X] = 0$ and 
	\[
		\bbE[e^{\lambda X}] \leq \exp\Big(\frac{\lambda^2}{2} v\Big),	\quad\forall \lambda \text{ such that } |\lambda| \leq \frac{1}{c}.
	\]
\end{definition}
\noindent
For example, exponential and chi-squared random variables (with centering) are sub-exponential.
Informally, a sub-gaussian random variable can be viewed as a sub-exponential random variable with $c$ tending to $0$.

A sub-exponential random variable exhibits sub-Gaussian tail behavior around its center and have exponentially decaying 
tail probabilities far away from $0$. More precisely, the following tail probability bounds can be obtained by the Cram\'er-Chernoff method: 
if $X$ is a sub-exponential random variable with parameters $(v,c)$, then for every $t > 0$,
\[
	\max \big\{ \Pr[ X > t ],~ \Pr[ X < -t ] \big\} \leq 
		\begin{cases}
			e^{-t^2/2v}		&	\text{if } 0 \leq t \leq \frac{v}{c},	\\
			e^{-{t/2c}}		&	\text{if } t > \frac{v}{c}.
		\end{cases}
\]

\subsubsection{Useful Inequalities}
\paragraph{Gaussian Comparison Inequality}
The following fundamental inequality, known as Slepian's lemma, expresses that a Gaussian process can get farther 
(i.e., has a larger supremum) when it has weaker correlations. We refer the interested readers to 
\cite[Theorem 7.2.1]{vershynin2018high} and \cite[Proposition 6.6]{aubrun2017alice} for more details.

\begin{definition}
	A random process $(X_t)_{t \in T}$ is a Gaussian process if the random vector $(X_t)_{t \in T_0}$ has normal distribution 
	for all finite subsets $T_0 \subset T$. 
\end{definition}

\begin{lemma}[Slepian's lemma]\label{lem:slepian}
	Let $(X_t)_{t \in T}$ and $(Y_t)_{t \in T}$ be Gaussian processes. Suppose that for all $t, s \in T$, the following three conditions hold: 
	(i) $\bbE X_t = \bbE Y_t = 0$; 
	(ii) $\bbE X_t^2 = \bbE Y_t^2$; and
	(iii) $\bbE X_t X_s \geq \bbE Y_t Y_s$. 
	Then for every $\tau \in \RR$,
	\[
		\Pr\bigg[ \sup_{t \in T} X_t \geq \tau \bigg]
			\leq
			\Pr\bigg[ \sup_{t \in T} Y_t \geq \tau \bigg].
	\]
\end{lemma}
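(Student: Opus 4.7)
The plan is to prove Slepian's lemma via the classical Gaussian interpolation argument. First, by monotone convergence it suffices to handle a finite index set $T = \{1, \ldots, n\}$, since $\{ \sup_{t \in T} X_t \geq \tau \}$ is the increasing union of $\{ \max_{t \in T_0} X_t \geq \tau \}$ over finite $T_0 \subset T$, and similarly for $Y$. So I would assume $T$ is finite and denote the covariance matrices by $\Sigma^X$ and $\Sigma^Y$; by hypothesis these have equal diagonals and $\Sigma^X_{ij} \geq \Sigma^Y_{ij}$ for every $i \neq j$.

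The heart of the argument is to build a one-parameter family $Z^\theta$ of centered Gaussian vectors linking $Y$ (at $\theta = 0$) to $X$ (at $\theta = 1$) and track how $\mathbb{E} f(Z^\theta)$ evolves for a suitable test function $f$. Concretely, take two independent draws of $X$ and $Y$ and set $Z^\theta = \sqrt{1-\theta}\, Y + \sqrt{\theta}\, X$, so that $Z^\theta$ is centered Gaussian with covariance $\Sigma(\theta) = (1-\theta)\Sigma^Y + \theta \Sigma^X$. Applying Gaussian integration by parts (Stein's identity) twice gives the key interpolation formula
\[
\frac{d}{d\theta} \mathbb{E} f(Z^\theta) = \frac{1}{2} \sum_{i,j} \big( \Sigma^X_{ij} - \Sigma^Y_{ij} \big) \, \mathbb{E} \big[ \partial_i \partial_j f(Z^\theta) \big],
\]
and the diagonal terms drop out since $\Sigma^X_{ii} = \Sigma^Y_{ii}$.

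Next I would choose a test function whose off-diagonal second partials are nonnegative. Fix a smooth nonincreasing $\phi_\epsilon : \RR \to [0,1]$ approximating $\Ind{u < \tau}$ as $\epsilon \downarrow 0$, and set $f_\epsilon(x) = \prod_{i=1}^n \phi_\epsilon(x_i)$. For $i \neq j$,
\[
\partial_i \partial_j f_\epsilon(x) = \phi_\epsilon'(x_i)\, \phi_\epsilon'(x_j) \prod_{\ell \neq i, j} \phi_\epsilon(x_\ell) \geq 0,
\]
since each $\phi_\epsilon'$ is nonpositive, the product of two of them is nonnegative, and the remaining factors lie in $[0,1]$. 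Combining this with $\Sigma^X_{ij} - \Sigma^Y_{ij} \geq 0$ for $i \neq j$ yields $\frac{d}{d\theta} \mathbb{E} f_\epsilon(Z^\theta) \geq 0$, hence $\mathbb{E} f_\epsilon(Y) \leq \mathbb{E} f_\epsilon(X)$. Passing $\epsilon \downarrow 0$ by dominated convergence turns this into $\Pr[\max_i Y_i < \tau] \leq \Pr[\max_i X_i < \tau]$, which is equivalent to the desired conclusion $\Pr[\sup_t X_t \geq \tau] \leq \Pr[\sup_t Y_t \geq \tau]$.

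The main obstacle I foresee is justifying the Gaussian integration-by-parts formula when $\Sigma(\theta)$ is potentially singular (for instance if some $X_t$ or $Y_t$ is constant or two coordinates are perfectly correlated). A clean workaround is to add an independent perturbation $\delta G$ with $G \sim N(0, I_n)$ to $Z^\theta$, run the argument on the strictly nondegenerate vector, and then let $\delta \downarrow 0$; the sign condition is preserved because $\delta^2 I_n$ contributes only to diagonal entries. Passing from finite to infinite $T$ at the very end is the routine monotone-convergence step mentioned at the outset.
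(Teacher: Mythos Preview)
Your argument is the standard Gaussian interpolation proof of Slepian's lemma and is correct; the reduction to finite $T$, the interpolation formula, the choice of product test function with nonnegative mixed partials, and the $\delta$-regularization for degenerate covariances are all handled properly. The paper does not supply its own proof of this lemma---it is stated as background and the reader is referred to \cite{vershynin2018high} and \cite{aubrun2017alice}---so there is nothing to compare against beyond noting that your proof is essentially the one found in those references.
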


There is a well known upper bound for the expectation of the largest eigenvalue of a standard Gaussian random matrix in $\bS^n$. 
Its proof is based on the Slepian's lemma and standard; see Appendix \ref{sec:proof_lemma} for the proof.

\begin{lemma}\label{lem:largest_GOE}
	If a random matrix $G \in \bS^n$ has the standard Gaussian distribution, then
	\[
		\bbE_G \big[ \lambda_1(G) \big]
			= \bbE_G \Big[ \sup_{v \in \bbS^{n-1}} \langle v, Gv \rangle \Big] 
			\leq \sqrt{2n}.
	\]
\end{lemma}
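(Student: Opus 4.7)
The plan is to apply Slepian's lemma (Lemma \ref{lem:slepian}) to a pair of Gaussian processes indexed by the unit sphere $\bbS^{n-1}$. Let $X_v := v^T G v$, so that $\sup_{v \in \bbS^{n-1}} X_v = \lambda_1(G)$. As the comparison process, take $Y_v := \sqrt{2}\,\langle g, v\rangle$, where $g \sim N(0, I_n)$ is a standard Gaussian vector in $\RR^n$ independent of $G$; note $\sup_v Y_v = \sqrt{2}\,\|g\|$. Both are centered Gaussian processes.

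The key second-order computation uses $\bbE G_{ii}^2 = 1$ and $\bbE G_{ij}^2 = 1/2$ for $i < j$: expanding $v^T G v = \sum_i v_i^2 G_{ii} + 2 \sum_{i<j} v_i v_j G_{ij}$ and using independence of the entries yields
\[
\bbE X_v X_w \;=\; \langle v, w \rangle^2, \qquad \bbE X_v^2 \;=\; 1,
\]
while directly $\bbE Y_v Y_w = 2 \langle v, w \rangle$ and $\bbE Y_v^2 = 2$. The variances do not match the hypotheses of Slepian's lemma, so I would augment $X$: let $h \sim N(0, 1)$ be independent of $G$ and of $g$, and set $\tilde X_v := X_v + h$. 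Then $\bbE \tilde X_v^2 = 2 = \bbE Y_v^2$, and
\[
\bbE \tilde X_v \tilde X_w - \bbE Y_v Y_w \;=\; \langle v, w \rangle^2 - 2 \langle v, w \rangle + 1 \;=\; (\langle v, w \rangle - 1)^2 \;\geq\; 0,
\]
so $\tilde X$ has pointwise larger covariances than $Y$ while sharing the same mean and variance.

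Lemma \ref{lem:slepian} (applied to a finite net on $\bbS^{n-1}$ and then extended to the sphere by standard monotonicity) therefore gives $\Pr[\sup_v \tilde X_v \geq \tau] \leq \Pr[\sup_v Y_v \geq \tau]$ for every $\tau \in \RR$, whence $\bbE \sup_v \tilde X_v \leq \bbE \sup_v Y_v$. Since $h$ is independent of the index $v$, $\sup_v \tilde X_v = \lambda_1(G) + h$, so $\bbE \sup_v \tilde X_v = \bbE \lambda_1(G)$. For the right-hand side, $\bbE \sup_v Y_v = \sqrt{2}\, \bbE \|g\| \leq \sqrt{2}\,(\bbE \|g\|^2)^{1/2} = \sqrt{2n}$ by Jensen's inequality, which closes the argument.

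The only real obstacle is the variance-matching step: with the natural comparison $Y_v = \sqrt{2}\langle g, v\rangle$, the covariance $2 \langle v, w\rangle$ can exceed $\langle v, w\rangle^2$ (e.g.\ for $\langle v,w\rangle \in (0,1)$), so Slepian's lemma cannot be applied directly. The independent $N(0,1)$ offset $h$ both equalizes the variances and shifts the covariance inequality to the perfect-square identity above; this is the one nontrivial ingredient, and everything else is routine bookkeeping.
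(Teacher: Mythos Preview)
Your proof is correct and essentially identical to the paper's: both add an independent $N(0,1)$ offset to $v^TGv$ to match variances, compare against the linear process $g^Tv$ with $g\sim N(0,2I_n)$ (equivalently your $\sqrt{2}\,\langle g,v\rangle$ with $g\sim N(0,I_n)$), verify the covariance gap is the perfect square $(1-\langle v,w\rangle)^2$, and finish with Slepian and Jensen. The only cosmetic difference is that you state the finite-net reduction explicitly, which the paper leaves implicit.
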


\begin{remark}\label{rem:tracy_widom}
	It is known that $\lim_{n \to \infty} \bbE_G \big[ \lambda_1(G) \big] / \sqrt{2n} = 1$. 
	Indeed, not only its expected value, but also its limiting distribution is known in the literature. 
	The quantity $\lambda_1(G) - \sqrt{2n}$ is of order $n^{-1/6}$ and its distribution converges to 
	the Tracy-Widom distribution after normalization.
\end{remark}

\paragraph{Gaussian Concentration}
A smooth function of independent Gaussian random variables is sub-Gaussian. 
The following result is widely known as the Gaussian concentration inequality; see \cite[Theorem 5.5]{boucheron2013concentration} for example. 
Note that the sub-Gaussian parameter $L^2$ depends only on the smoothness of the function, and not on the number of Gaussian random variables.
\begin{lemma}[Gaussian concentration]\label{lem:gauss_conc}
	Let $X = (X_1, \dots, X_n)$ be a vector of $n$ independent standard Gaussian random variables. 
	If $f: \RR^d \to \RR$ is $L$-Lipschitz (with respect to the $\ell_2$ norm), then $f(X) - \bbE f(X)$ is sub-Gaussian with sub-Gaussian parameter $L^2$. 
\end{lemma}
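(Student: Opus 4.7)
The plan is to prove this classical Gaussian concentration bound via the Herbst argument, which combines the Gaussian logarithmic Sobolev inequality with a differential inequality on the moment generating function. First, I would reduce to the case where $f$ is smooth: convolving $f$ with a Gaussian mollifier of width $\epsilon > 0$ yields an $L$-Lipschitz $C^{\infty}$ function $f_\epsilon$ with $\|\nabla f_\epsilon\|_2 \leq L$ pointwise; by dominated convergence it suffices to prove the claim for each $f_\epsilon$ and let $\epsilon \to 0$. So assume $f$ is smooth with $\|\nabla f\|_2 \leq L$ everywhere.

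Next, I would invoke the Gaussian logarithmic Sobolev inequality: for every smooth $g: \RR^n \to \RR$,
\[
\bbE\bigl[g^2 \log g^2\bigr] - \bbE[g^2]\,\log \bbE[g^2] \leq 2\,\bbE\bigl[\|\nabla g\|_2^2\bigr],
\]
where $\bbE$ is taken with respect to the standard Gaussian measure on $\RR^n$. Apply this with $g = \exp(\lambda f / 2)$ for fixed $\lambda > 0$. Setting $H(\lambda) := \bbE[e^{\lambda f(X)}]$, one has $\nabla g = (\lambda/2)\,g\,\nabla f$, hence $\|\nabla g\|_2^2 \leq (\lambda^2 L^2/4)\,e^{\lambda f}$. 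Direct substitution turns the log-Sobolev inequality into the differential inequality
\[
\lambda H'(\lambda) - H(\lambda) \log H(\lambda) \leq \frac{\lambda^2 L^2}{2}\,H(\lambda).
\]
Dividing by $\lambda^2 H(\lambda)$, the left-hand side is precisely the derivative of $\psi(\lambda) := (1/\lambda)\log H(\lambda)$, so $\psi'(\lambda) \leq L^2/2$. Since $\lim_{\lambda \to 0^+}\psi(\lambda) = \bbE[f]$ (from the Taylor expansion $\log H(\lambda) = \lambda \bbE[f] + O(\lambda^2)$), integrating from $0^+$ to $\lambda$ gives $\psi(\lambda) \leq \bbE[f] + \lambda L^2/2$, equivalently $\bbE\bigl[e^{\lambda(f - \bbE f)}\bigr] \leq e^{\lambda^2 L^2 / 2}$ for $\lambda > 0$. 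The bound for $\lambda < 0$ follows by applying the same argument to $-f$, which is also $L$-Lipschitz; this is exactly the sub-Gaussian condition with parameter $L^2$.

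The main obstacle is the Gaussian log-Sobolev inequality itself, which I would invoke as a standard ingredient (it admits a short proof via tensorization of the one-dimensional case, or via commutation along the Ornstein-Uhlenbeck semigroup). An alternative, more self-contained route avoids log-Sobolev entirely via a smart-path interpolation: introduce an independent copy $Y$ of $X$, define $X_\theta = X\cos\theta + Y\sin\theta$ and $X_\theta' = -X\sin\theta + Y\cos\theta$ for $\theta \in [0,\pi/2]$, note that $(X_\theta, X_\theta')$ is a pair of independent standard Gaussians for every $\theta$, and differentiate $\theta \mapsto \bbE[e^{\lambda f(X_\theta)}]$ using the chain rule and the Lipschitz bound $|\langle \nabla f(X_\theta), X_\theta'\rangle| \leq L\|X_\theta'\|_2$; the conditional Gaussian MGF along the path integrates to the same $e^{\lambda^2 L^2 / 2}$ factor. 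Either route yields the stated bound.
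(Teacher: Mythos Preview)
Your argument via the Gaussian log-Sobolev inequality and Herbst's method is correct and standard. The paper does not actually supply its own proof of this lemma; it merely cites it as a known result (Theorem~5.5 in Boucheron--Lugosi--Massart, \emph{Concentration Inequalities}), and the proof given there is precisely the Herbst argument you outline. So your proposal matches the intended reference.
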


The following lemma states that the support function of a convex set concentrates around its mean. 
It can be proved applying Lemma \ref{lem:gauss_conc} to the support function, which is Lipschitz 
with the Lipschitz constant being the diameter of the set. We provide a proof in Appendix \ref{sec:proof_lemma} 
for readers' convenience.
\begin{lemma}\label{lemma:gaussian_conc}
	Let $K \subset \RR^d$ be a convex set containing $0$. Let $w_G(K)$ denote the Gaussian width of $K$. Then for any $\alpha \geq 0$,
	\[
		\max\bigg\{
			\Pr_{g \sim N(0,I_d)} \Big[  \max_{x \in K} \langle g, x \rangle < (1 - \alpha) w_G(K) \Big], ~
			\Pr_{g \sim N(0,I_d)} \Big[  \max_{x \in K} \langle g, x \rangle > (1 + \alpha) w_G(K) \Big]
		\bigg\}
		 \leq \exp \bigg( - \frac{\alpha^2}{4\pi} \bigg).
	\]
\end{lemma}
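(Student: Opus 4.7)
The plan is to reduce the desired bound to a direct application of Gaussian concentration (Lemma \ref{lem:gauss_conc}) to the support function $h_K(g) := \sup_{x \in K} \langle g, x \rangle$, combined with a simple but essential comparison between the circumradius $R_K := \sup_{x \in K} \|x\|_2$ and the Gaussian width $w_G(K)$ that uses the hypothesis $0 \in K$. This comparison is the only place where $0 \in K$ enters, and it is precisely what removes any dependence on the ambient geometry of $K$ from the final constant.

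First, I would verify that $h_K$ is $R_K$-Lipschitz on $\RR^d$: indeed, $h_K(g) - h_K(g') \leq \sup_{x \in K} \langle g - g', x \rangle \leq R_K \|g - g'\|_2$, and the reverse inequality follows by swapping $g$ and $g'$. Applying Lemma \ref{lem:gauss_conc} then yields the one-sided sub-Gaussian tail estimates $\Pr[ h_K(g) > w_G(K) + t ] \leq \exp(-t^2 / (2 R_K^2))$ and similarly for the lower tail, valid for every $t \geq 0$.

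The key step is the comparison $R_K \leq \sqrt{2\pi}\, w_G(K)$. For any fixed $x \in K$, using $0 \in K$ we obtain the pointwise lower bound $h_K(g) \geq \max\{\langle g, 0\rangle,\, \langle g, x \rangle\} = [\langle g, x \rangle]_+$. Taking expectations and using that $\langle g, x \rangle \sim N(0, \|x\|_2^2)$ together with $\bbE[Z]_+ = 1/\sqrt{2\pi}$ for $Z \sim N(0,1)$, we get $w_G(K) \geq \|x\|_2 / \sqrt{2\pi}$; taking supremum over $x \in K$ gives the claim. Substituting $R_K^2 \leq 2\pi\, w_G(K)^2$ into the concentration bound with $t = \alpha\, w_G(K)$ immediately yields $\exp(-\alpha^2 / (4\pi))$ on each side. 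There is no serious obstacle; the only step that is not completely automatic is the circumradius-versus-width comparison, and it reduces to recognizing that $[\langle g, x\rangle]_+$ is a valid pointwise lower bound for $h_K(g)$ whenever $x \in K$ and $0 \in K$.
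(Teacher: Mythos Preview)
Your proposal is correct and follows essentially the same route as the paper: show $h_K$ is $R_K$-Lipschitz, apply Gaussian concentration (Lemma~\ref{lem:gauss_conc}), and then use $R_K \leq \sqrt{2\pi}\, w_G(K)$ together with $t = \alpha\, w_G(K)$. The only minor difference is that the paper obtains $R_K \leq \sqrt{2\pi}\, w_G(K)$ by invoking the diameter bound $\sup_{x,y \in K}\|x-y\|_2 \leq \sqrt{2\pi}\, w_G(K)$ from \cite[Proposition 7.5.2(e)]{vershynin2018high} combined with $0 \in K$, whereas you give a direct self-contained argument via $h_K(g) \geq [\langle g, x\rangle]_+$; both are valid and yield the same constant.
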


\paragraph{MGF of Sub-Gaussian Chaos of Order 2}
We review the concentration of quadratic forms of the type 
\[
	\sum_{i, j=1}^n a_{ij} X_i X_j = X^T A X
\]
where $A = (a_{ij})$ is an $n \times n$ matrix of coefficients, and $X = (X_1, \dots, X_n)$ is a random vector 
with independent coordinates. Such a quadratic form is known as a chaos (of order 2) in probability theory. 

When $X_i$'s are sub-Gaussian random variables (e.g., Gaussian or Rademacher), the quadratic form $X^T A X$ 
is sub-exponential. The following upper bound is well known, and can be used to derive a Bernstein-type exponential 
concentration results (e.g., Hanson-Wright inequality) for $X^T  A X$. Its proof is based on standard techniques such as decoupling 
and comparison to Gaussian chaos. We omit the proof here and refer the interested readers to \cite[Sections 6.1 \& 6.2]{vershynin2018high} for more details. 

\begin{lemma}[MGF of sub-Gaussian chaos of order 2]\label{lemma:hanson_wright}
	Let $X = (X_1, \ldots, X_n) \in \RR^n$ be a random vector with independent sub-Gaussian coordinates with sub-Gaussian parameter $v$, 
	and let $A$ be an $n \times n$ matrix with zero diagonal. 
	Then $X^T A X$ is sub-exponential with parameters $( c_1 \|A\|_F^2 v, ~ c_2 \| A \|_{op} )$ for some absolute constants $c_1, c_2 > 0$, i.e., 
	\[
		\bbE \exp \big( \lambda X^T A X \big) \leq \exp \Big( \frac{ \lambda^2 }{2} c_1 \| A \|_F^2 v \Big),
			\qquad\text{for all }\lambda \text{ s.t. }|\lambda| \leq \frac{1}{c_2 \| A \|_{op}}.
	\]
\end{lemma}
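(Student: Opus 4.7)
The plan is to combine decoupling with a comparison-to-Gaussian argument, followed by an explicit eigenvalue-level MGF computation. First, I would invoke a decoupling inequality: because $A$ has zero diagonal, a classical result gives
\[
\bbE \exp\bigl( \lambda X^T A X \bigr) \leq \bbE \exp\bigl( 4\lambda X^T A X' \bigr),
\]
where $X'$ is an independent copy of $X$. The vanishing diagonal is essential here: otherwise the diagonal terms $\sum_i a_{ii} X_i^2$ would contribute a (non-centered) scale that cannot be absorbed into $\|A\|_F^2$ and $\|A\|_{op}$ via this route.

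Next, I would condition on $X'$. Then $X^T A X' = \sum_i X_i (AX')_i$ is a weighted sum of independent sub-Gaussian random variables with deterministic weights $(AX')_i$, so by the sub-Gaussian MGF bound,
\[
\bbE\bigl[ \exp(4\lambda X^T A X') \,\big|\, X' \bigr] \leq \exp\bigl( 8 \lambda^2 v \, \| AX' \|_2^2 \bigr).
\]
The remaining task is to take expectation of $\exp(8 \lambda^2 v \|AX'\|_2^2)$, which is the MGF of the positive semidefinite quadratic form $\|AX'\|_2^2 = (X')^T (A^T A) X'$ in independent sub-Gaussian variables.

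For this last step, I would replace the sub-Gaussian $X'$ with a standard Gaussian vector $g$ via a symmetrization followed by a tensorization of coordinate-wise MGF comparisons, reducing the problem to bounding $\bbE \exp\bigl( t \|Ag\|_2^2 \bigr)$. Using the singular value decomposition $A = U \Sigma V^T$ with singular values $\sigma_1, \dots, \sigma_n$, rotational invariance of $g$ implies that $\|Ag\|_2^2$ has the same law as $\sum_i \sigma_i^2 g_i^2$ with independent $g_i \sim N(0,1)$, whose MGF factors as
\[
\bbE \exp\bigl( t \| Ag \|_2^2 \bigr) = \prod_{i=1}^n (1 - 2 t \sigma_i^2)^{-1/2},
\]
valid for $|t| < 1/(2 \|A\|_{op}^2)$. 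Applying the elementary inequality $-\tfrac{1}{2}\log(1-x) \leq x + x^2$ on $[0, 1/2]$, together with $\sum_i \sigma_i^2 = \|A\|_F^2$ and $\sigma_i^2 \leq \|A\|_{op}^2$, then yields an upper bound of the form $\exp(C \lambda^2 v \|A\|_F^2)$ in the claimed range $|\lambda| \leq c/\|A\|_{op}$.

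I expect the main obstacle to be the sub-Gaussian-to-Gaussian comparison for \emph{quadratic} forms: while $\bbE \exp(t X_i) \leq \bbE \exp(t \sqrt{v}\, g_i)$ holds pointwise and works cleanly for linear forms, passing to the quadratic setting requires extra care (for instance, first symmetrize via $X \mapsto \varepsilon X$ with an independent Rademacher $\varepsilon$, then invoke a contraction principle to compare to Gaussians). Tracking the absolute constants cleanly through decoupling, conditional integration, and the Gaussian reduction is the principal bookkeeping difficulty; the structural inequalities themselves are standard.
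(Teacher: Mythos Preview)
Your proposal is correct and matches the paper's approach: the paper does not give a proof but simply states that it ``is based on standard techniques such as decoupling and comparison to Gaussian chaos'' and cites \cite[Sections 6.1 \& 6.2]{vershynin2018high}, which is precisely the route you outline. One small remark on the obstacle you flag: in Vershynin's argument the Gaussian comparison is done at the bilinear level---replace $X$ by $g$ first (conditioning on $X'$), then condition on $g$ and use sub-Gaussianity of $X'$ to reduce directly to the purely Gaussian expectation $\bbE_g \exp(c\lambda^2 v\,\|A^T g\|_2^2)$---which sidesteps the need to compare a sub-Gaussian quadratic form to a Gaussian one.
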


Observe that for any function $f: H_n \to \RR$, its degree-2 projection, $\proj_2 (f)$, is a multilinear quadratic form on $H_n$. 
That is, there exists some matrix $A$ with zero diagonal\footnote{More precisely, $A_{ij} = \frac{1}{2} \bbE_{X \sim \mu(H_n)} [ X_i X_j f(X) ]$ 
for $i, j \in [n]$ such that $i \neq j$. } such that $\proj_2(f)(x) = x^T A x$ for all $x \in H_n$. 
Therefore, the random variable $\proj_2(f)(X)$ derived from the uniform random vector $X \sim \mu(H_n)$ is sub-exponential by 
Lemma \ref{lemma:hanson_wright}. We formally state this observation in the following lemma to use later in the proof of Theorem \ref{thm:main.1}; 
see Appendix \ref{sec:proof_lemma} for its proof.

\begin{lemma}\label{lemma:subexp_tail}
	Let $X$ be a random vector uniformly distributed over $H_n$. For any function $f: H_n \to \RR$, the derived random variable 
	$\proj_2(f)(X)$ is sub-exponential with parameters $(c_1 M_f^2, ~ c_2 M_f )$ where $M_f = \| \proj_2 f \|_2 / \sqrt{2}$, and 
	$c_1, c_2>0$ are the same absolute constants that appear in Lemma \ref{lemma:hanson_wright}. That is,
	\[
		\bbE_{X \sim \mu(H_n)} \exp \big( \lambda~ \proj_2(f)(X) \big)
			\leq  \exp \bigg( \frac{\lambda^2}{2} c_1 M_f^2 \bigg),
			\qquad\text{for all }\lambda \text{ s.t. }|\lambda| \leq \frac{1}{c_2 M_f }.
	\]
\end{lemma}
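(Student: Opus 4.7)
The plan is to realize $\proj_2(f)(X)$ as a quadratic chaos $X^T A X$ in the Rademacher variables $X_i$ and then apply Lemma \ref{lemma:hanson_wright} directly, with the main work being the bookkeeping that identifies $\|A\|_F$ and $\|A\|_{op}$ in terms of $M_f$.

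First, I would expand $\proj_2(f)$ in the Fourier basis of $H_n$: since $\proj_2(f)$ is the degree-2 harmonic component, it is a homogeneous multilinear polynomial of the form
\[
	\proj_2(f)(x) \;=\; \sum_{1 \leq i < j \leq n} \hat{f}(\{i,j\})\, x_i x_j,
\]
where $\hat{f}(\{i,j\}) = \bbE_{X \sim \mu(H_n)}[X_i X_j f(X)]$. Setting $A_{ij} = A_{ji} = \tfrac{1}{2}\hat{f}(\{i,j\})$ for $i \neq j$ and $A_{ii} = 0$ produces a symmetric matrix with zero diagonal such that $\proj_2(f)(x) = x^T A x$ for every $x \in H_n$. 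Since each summand involves distinct indices and $\bbE[X_i X_j] = 0$ for $i \neq j$, the mean-zero hypothesis of Definition \ref{def:subE} is automatic.

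Next, I would compute the Frobenius norm of $A$ and use Parseval's identity. Because $A$ is symmetric with zero diagonal,
\[
	\|A\|_F^2 \;=\; 2 \sum_{i<j} A_{ij}^2 \;=\; \tfrac{1}{2} \sum_{i<j} \hat{f}(\{i,j\})^2 \;=\; \tfrac{1}{2} \| \proj_2 f \|_2^2 \;=\; M_f^2,
\]
where the third equality uses Parseval/Plancherel on $H_n$. Together with the elementary bound $\|A\|_{op} \leq \|A\|_F = M_f$, this is all the norm information we need.

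Finally, I would invoke Lemma \ref{lemma:hanson_wright} with the coordinates $X_i$ being Rademacher, which are sub-Gaussian with parameter $v = 1$ (since $\bbE[e^{\lambda X_i}] = \cosh(\lambda) \leq e^{\lambda^2/2}$). The lemma then gives, with the same universal constants $c_1, c_2$,
\[
	\bbE \exp\bigl( \lambda\, X^T A X \bigr) \;\leq\; \exp\!\Bigl( \tfrac{\lambda^2}{2} c_1 \|A\|_F^2 \Bigr) \;=\; \exp\!\Bigl( \tfrac{\lambda^2}{2} c_1 M_f^2 \Bigr)
\]
for $|\lambda| \leq 1/(c_2 \|A\|_{op})$, which certainly holds whenever $|\lambda| \leq 1/(c_2 M_f)$ because $\|A\|_{op} \leq M_f$. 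Substituting $X^T A X = \proj_2(f)(X)$ yields the claimed sub-exponential parameters $(c_1 M_f^2,\, c_2 M_f)$. There is no real obstacle here; the only thing to watch is the factor of $\tfrac{1}{2}$ in the definition of $A_{ij}$, which is precisely what produces the normalization $M_f = \|\proj_2 f\|_2/\sqrt{2}$ rather than $\|\proj_2 f\|_2$ itself.
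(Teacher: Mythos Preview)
Your proof is correct and essentially identical to the paper's: both define the symmetric zero-diagonal matrix $A$ with $A_{ij}=\tfrac12\bbE[X_iX_jf(X)]$, compute $\|A\|_F^2=\tfrac12\|\proj_2 f\|_2^2=M_f^2$, bound $\|A\|_{op}\le\|A\|_F$, note that Rademacher coordinates are sub-Gaussian with parameter $1$, and apply Lemma~\ref{lemma:hanson_wright}. Your write-up is in fact slightly more explicit about the mean-zero check and the Parseval step.
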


\paragraph{Maximal Inequalities}
The following simple maximal inequality is well known, and it is asymptotically sharp if the random variables are i.i.d. Gaussian. 
Its proof can be found in Appendix \ref{sec:proof_lemma}.
\begin{lemma}\label{lem:maximal}
	Let $X_1, \dots, X_N$ be sub-exponential random variables with parameters $(v, c)$. Then
	\[
		\bbE\Big[ \max_{i \in [N] } X_i \Big] \leq \max \big\{ \sqrt{2 v \log N}, ~ 2c \log N \big\}.
	\]
\end{lemma}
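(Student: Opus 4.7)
The plan is to apply the standard Cramér--Chernoff / log-sum-exp trick, which is the canonical way to derive maximal inequalities from moment-generating function bounds. Since each $X_i$ is sub-exponential with parameters $(v,c)$, we have access to the MGF control $\bbE[e^{\lambda X_i}] \leq \exp(\lambda^2 v / 2)$, but only on the restricted range $|\lambda| \leq 1/c$. This restriction is the one bookkeeping point I need to keep track of; otherwise the argument is purely mechanical.

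First, fix any $\lambda \in (0, 1/c]$. Applying Jensen's inequality to the convex function $x \mapsto e^{\lambda x}$ and then using monotonicity and a union bound inside the expectation yields
\[
\exp\!\big( \lambda \, \bbE[\max_i X_i] \big) \;\leq\; \bbE\big[\exp(\lambda \max_i X_i)\big] \;=\; \bbE\big[\max_i \exp(\lambda X_i)\big] \;\leq\; \sum_{i=1}^N \bbE[\exp(\lambda X_i)] \;\leq\; N \exp\!\big( \lambda^2 v / 2 \big).
\]
Taking logarithms and dividing by $\lambda$ gives the one-parameter family of bounds $\bbE[\max_i X_i] \leq \frac{\log N}{\lambda} + \frac{\lambda v}{2}$, valid for every $\lambda \in (0,1/c]$.

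Second, I optimize over $\lambda$. The unconstrained minimizer of $\log N / \lambda + \lambda v / 2$ is $\lambda^\star = \sqrt{2 \log N / v}$, which achieves the value $\sqrt{2v \log N}$. I split into two cases depending on whether $\lambda^\star$ lies in the admissible range $(0,1/c]$. If $\sqrt{2 \log N / v} \leq 1/c$ (equivalently $v \geq 2 c^2 \log N$), I can plug in $\lambda = \lambda^\star$ and conclude $\bbE[\max_i X_i] \leq \sqrt{2v \log N}$. Otherwise, $v < 2 c^2 \log N$, so in particular $\sqrt{2 v \log N} < 2 c \log N$; in this regime I plug in the boundary value $\lambda = 1/c$, obtaining $\bbE[\max_i X_i] \leq c \log N + v/(2c)$. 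Using $v < 2 c^2 \log N$ once more gives $v/(2c) < c \log N$, so the bound is at most $2 c \log N$. Combining the two cases yields the claimed $\max\{\sqrt{2 v \log N},\, 2c \log N\}$.

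The hard part is essentially nonexistent; the only subtle choice is how to phrase the second case so that the numerical constant is clean. I want the two regimes to be stitched together by a single $\max$, which is why I deliberately relax the boundary estimate $c \log N + v/(2c)$ to $2 c \log N$ in the sub-Gaussian-constraint-binding regime rather than carrying a fractional constant. No decoupling, symmetrization, or tail-integration steps are required.
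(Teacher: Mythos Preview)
Your proof is correct and essentially identical to the paper's own argument: both apply the Cram\'er--Chernoff/log-sum-exp trick to obtain $\bbE[\max_i X_i] \leq \frac{\log N}{\lambda} + \frac{\lambda v}{2}$ for $\lambda \in (0,1/c]$, then split into the same two cases depending on whether the unconstrained minimizer $\lambda^\star = \sqrt{2\log N / v}$ lies in the admissible range. The only cosmetic difference is that the paper phrases the Jensen step via the concavity of $\log$ while you phrase it via the convexity of $\exp$; the computations are otherwise line-for-line the same.
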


\section{Three Notions of Approximation}\label{sec:notion_approx}
Recall that we want to approximate the positive semidefinite cone $\bS_+^n$ with a convex cone $\cK \supseteq \bS_+^n$ 
so that the feasible set $\base{\cK} = \big( \cK \cap H \big) - \frac{1}{n}I_n$ (cf. Remark \ref{rem:BHK}) 
well approximates $\base{\bS_+^n}$. In Section \ref{sec:defn_sets}, we introduce three notions of approximation for sets. 
In Section \ref{sec:defn_cones}, we extend these notions to cones to assess the quality of $\cK$ as an approximation of $\bS_+^n$.

Specifically, we first define a natural notion of $\epsilon$-approximation for sets that contain the origin (Definition \ref{defn:approx}). 
Then, we additionally describe two auxiliary notions of approximation for the convenience of our analysis, namely, the average $\epsilon$-approximation 
(Definition \ref{defn:average_approx}) and the dual-average $\epsilon$-approximation (Definition \ref{defn:average_approx_dual}). 
These two auxiliary notions can be obtained by relaxing a quantifier in the definition of $\epsilon$-approximation. These relaxed notions are 
closely related, but incomparable to each other. They will be respectively used in Section \ref{sec:results.1} and Section \ref{sec:result.extended} 
to prove the hardness of approximating $\bS_+^n$ with a small number of $k \times k$ PSD constraints.

\subsection{Notions of Approximation for Sets}\label{sec:defn_sets}
To begin with, we define the notion of $\epsilon$-approximation for sets containing the origin.
\begin{definition}[$\epsilon$-approximation]\label{defn:approx}
	Let $P$ be a set containing $0$. For $\epsilon > 0$, a set $S$ is an \emph{$\epsilon$-approximation} of $P$ 
	if $P \subseteq S \subseteq ( 1 + \epsilon) P$. 
	Given two sets $P, S$ that contain $0$, we let
	\[
		\epsbasic(P, S) := \inf\{ \epsilon > 0: S \text{ is an }\epsilon \text{-approximation of }P  \}.
	\]
\end{definition}
\noindent
This is a natural notion to quantify how tightly a set $P$ containing $0$ can be approximated by another set $S \supseteq P$. 
Recall the definition of the support function $h_S(x) := \sup_{z \in S} \langle x, z \rangle$, cf. \eqref{eqn:support}. 
We observe that if $S$ is an $\epsilon$-approximation of $P$, then 
\begin{equation}\label{eq:implication}
	h_P(x) \leq h_S(x) \leq (1+\epsilon) h_P(x) \quad\text{for all } x.
\end{equation}
That is, if $S$ is an $\epsilon$-approximation of $P$, then for every direction in the ambient space, 
the distance from the supporting hyperplane of $S$ to the origin is at most $(1+\epsilon)$ times 
the distance from the supporting hyperplane of $P$ to the origin.
Moreover, when $P$ and $S$ are convex, the converse is also true.

Next, we define a more lenient notion of approximation by relaxing the quantifier `for all $x$' in \eqref{eq:implication} 
by taking average over random direction $x$. To this end, recall the notion of Gaussian width from Definition \ref{defn:gaussian_width} 
that $w_G(S) = \bbE_G   \left[ h_S(G) \right]$ for any nonempty bounded set $S \subset \RR^d$, where $G$ is a standard Gaussian. 
\begin{definition}[average $\epsilon$-approximation]\label{defn:average_approx}
	Let $P$ be a set containing $0$. For $\epsilon > 0$, a set $S$ is an \emph{average $\epsilon$-approximation} of $P$, 
	or \emph{$\epsilon$-approximation of $P$ in the average sense}, if $P \subseteq S$ and $w_G(S) \leq (1+\epsilon) w_G(P)$. 
	Given two sets $P, S$ that contain $0$, we let
	\[
		\epsavg(P, S) := \inf\{ \epsilon > 0: S \text{ is an average }\epsilon \text{-approximation of }P  \}.
	\]
\end{definition}
\noindent
By definition, $S$ is an average $\epsilon$-approximation of $P$ if and only if $\bbE_{G} [ h_S(G) - h_P(G) ] 
\leq \epsilon \cdot \bbE_{G}\big[ h_P(G) \big]$ where $G$ is a standard Gaussian random matrix in $\bS^n$.

Note that average $\epsilon$-approximation is a weaker notion than $\epsilon$-approximation because 
$\epsbasic(P, S) \geq \epsavg(P, S)$. That is, for a fixed $\epsilon > 0$, if $S$ is an $\epsilon$-approximation of $P$, 
then $S$ is also an average $\epsilon$-approximation of $P$. As a matter of fact, average $\epsilon$-approximation is 
a strictly weaker notion because there exists a pair of sets $(P,S)$ such that $\epsbasic(P, S) > \epsavg(P, S)$, i.e., 
there exists some $\epsilon > 0$ for which $S$ is not an $\epsilon$-approximation of $P$ whereas 
$S$ is an average $\epsilon$-approximation of $P$. We illustrate this point with the following two examples.

\begin{example}\label{exmp:1}
Let $P = \{(x,y) \in \RR^2: x^2 + y^2 \leq 1 \}$ and $S = \{ (x,y) \in \RR^2: x^2/4 + y^2 \leq 1 \}$. 
Then $\epsbasic(P, S) = 1$. On the other hand, $\epsavg(P, S) = \frac{4}{\pi} E(3/4) - 1 \approx 0.54196$ 
where $E(m)$ is the complete elliptic integral of the second kind with parameter $m = k^2$. 
The value of $\epsavg(P, S)$ can be computed by observing that $w_G(P) = \bbE_{G \in N(0, I_2)} \| g \|_2$ and 
$w_G(S) = \bbE_{g \in N(0, I_2)} \| g \|_2 \cdot \frac{1}{2\pi} \int_0^{2\pi} \sqrt{ 4 \cos^2 \theta + \sin^2 \theta } d\theta 
= \frac{4}{\pi} E(3/4) \bbE_{g \in N(0, I_2)} \| g \|_2$.
\end{example}

\begin{example}\label{exmp:2}
	Let $P = \{ x \in \RR^n: \| x \|_{2} \leq 1 \}\}$ and $S = \{ x \in \RR^n: \| x \|_{1} \leq \sqrt{n} \}\}$ where $\| \cdot \|_{p}$ 
	denotes the $\ell_p$-norm. Then $\epsbasic(P, S) = \sqrt{n} - 1$. On the other hand, $\epsavg(P, S)= f(n)$ where $f(n)$ 
	is a function of $n$ such that $f(n) \approx \sqrt{2 \log n}$ for sufficiently large $n$. 
	It is because $w_G(P) = \bbE_{g \sim N(0, I_n)} \| g \|_2 \approx \sqrt{n}$ and $w_G(S) 
	= \sqrt{n} \cdot \bbE_{g \sim N(0, I_n)} \max_{i \in [n]}  | g_i |\approx \sqrt{2n \log n}$.
\end{example}

In the two examples above, we observed there exists $\epsilon > 0$ such that $S$ is an $\epsilon$-approximation of $P$ 
in the average sense, while it is not an $\epsilon$-approximation. This happens because $h_S(G) - h_P(G)$ is small on average, 
but the difference can be potentially large for some $G$. In other words, $S$ approximates $P$ well on average, 
but poorly for certain `bad' directions in the ambient space, as illustrated in Figure \ref{fig:approx}. 
Nevertheless, the set of `bad' directions might have only a small measure as in Example \ref{exmp:2}, and the notion of 
$\epsilon$-approximation as in Definition \ref{defn:approx} can be overly conservative. That is why we additionally consider 
the notion of average $\epsilon$-approximation, which is more lenient with the shape of the approximating set $S$. 


\begin{figure}	
	\centering
	\begin{subfigure}[b]{0.33\textwidth}
		\centering
		\begin{tikzpicture}
		\draw [pattern=north east lines, pattern color=black!15] (0,0) circle (1.5cm);
		\draw [color=blue]
			(2.5,0) -- (1.25, 2.15) -- (-1.25, 2.15) -- (-2.5, 0) -- (-1.25, -2.15) -- (1.25, -2.15) -- cycle;
		\draw [dashed] (0,0) circle (2.5cm);
		
		\node (A1) at (0.5, -1) {$P$};
		\node (A2) at (1.25, -1.5) {\color{blue}$S$};
		\node (A3) at (2.4, -2) {$(1+\epsilon) P$};
		
		\draw[<->] (0, 0) -- (0.76,1.285);
		\draw[<->] (0.76,1.29) -- (1.25, 2.15) ;
		\node (A01) at (0.1, 0.8) {\footnotesize$1$};
		\node (A02) at (0.7, 1.7) {\footnotesize$\epsilon$};
	
		\end{tikzpicture}
		\caption{$S$ is an $\epsilon$-approximation of $P$}
	\end{subfigure}
	\qquad\qquad
	\begin{subfigure}[b]{0.5\textwidth}
		\centering
		\begin{tikzpicture}
		\draw [pattern=north east lines, pattern color=black!15] (0,0) circle (1.5cm);
		\draw [blue] plot [smooth cycle]
			coordinates {(5,1.5) (1.7, 1.1) (1, 1.8) (0.5, 1.7) (-0.2, 2.1) (-0.9, 1.4) (-1.7, 1.2) (-1.8, 0.3) (-2.3, -0.2) (-1.7, -0.5) (-1.8, -1.1) (-1.5, -1.3) (-1.3, -1.8) (-0.4, -1.7) (0.1, -2.2) (0.6, -1.7) (1.4, -1.3) (2.4, -1.5) (2.2, -0.5)};
	
		\node (B1) at (1.1, -0.25) {$P$};
		\node (B2) at (1.9, -0.35) {\color{blue}$S$};

		\draw[<->] (0, 0) -- (1.44, 0.432);
		\draw[<->] (1.44, 0.432) -- (5, 1.5) ;
	
		\end{tikzpicture}
		\caption{$S$ `poorly' approximates $P$ only in some directions}
	\end{subfigure}
	\caption{Cartoons illustrating the difference between $\epsilon$-approximation and average $\epsilon$-approximation.}
	\label{fig:approx}
\end{figure}
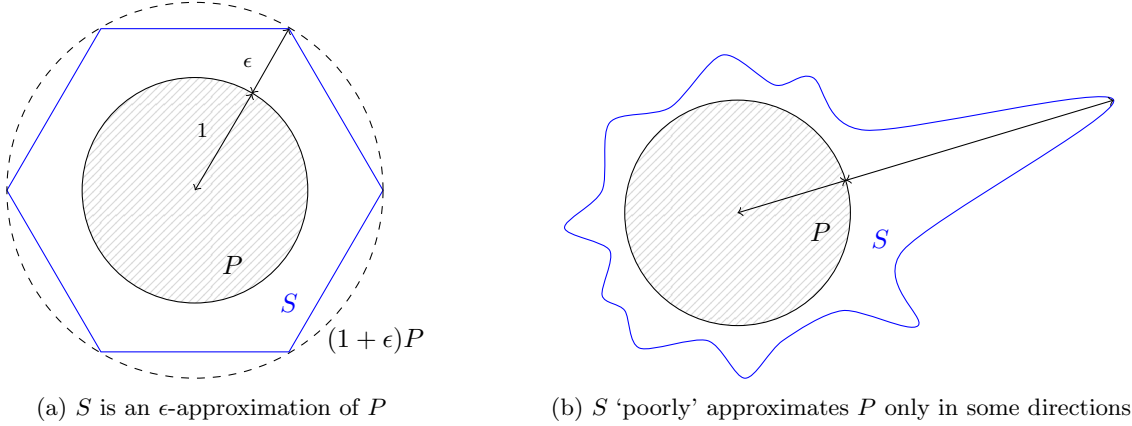

One drawback of evaluating the quality of approximation with the notion of average $\epsilon$-approximation is that it only 
measures the difference averaged over an ensemble of random objectives. Thus, we cannot control the gap $h_S(x) - h_P(x)$  
for any specific $x$, however, we can still establish a probabilistic upper bound on $h_S(G) - h_P(G)$ when $G$ is randomly drawn 
from the standard Gaussian distribution. 

\begin{lemma}\label{lem:prob_control}
	Let $S$ be an average $\epsilon$-approximation of $P$ for some $\epsilon > 0$. 
	Then 
	for all $ \tau > 0$,
	\[
		\Pr_{G \sim \text{std Gaussian}} \Big[ h_S(G) - h_P(G) > \tau \Big] 
			\leq	\epsilon	\frac{w_G(P)}{ \tau }.
	\]
\end{lemma}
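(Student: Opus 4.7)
The plan is to recognize this as a direct application of Markov's inequality to a non-negative random variable whose expectation we can control via the definition of average $\epsilon$-approximation. The two ingredients I need are non-negativity of the random variable $h_S(G) - h_P(G)$ and a clean expectation bound.

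First I would observe that since $S$ is an average $\epsilon$-approximation of $P$, by Definition \ref{defn:average_approx} we have $P \subseteq S$, which immediately implies $h_S(x) \geq h_P(x)$ for all $x$ (the supremum of $\langle x, \cdot \rangle$ over a larger set is at least as large). Hence the random variable $Y := h_S(G) - h_P(G)$ is non-negative pointwise, which is exactly the hypothesis Markov's inequality needs.

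Next I would compute $\bbE[Y]$ using linearity of expectation together with the definition of Gaussian width (Definition \ref{defn:gaussian_width}):
\[
\bbE_G[h_S(G) - h_P(G)] = w_G(S) - w_G(P) \leq \epsilon \cdot w_G(P),
\]
where the inequality follows from the second condition in Definition \ref{defn:average_approx}, namely $w_G(S) \leq (1+\epsilon) w_G(P)$.

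Finally, I would apply Markov's inequality to the non-negative random variable $Y$:
\[
\Pr[Y > \tau] \leq \frac{\bbE[Y]}{\tau} \leq \frac{\epsilon \cdot w_G(P)}{\tau},
\]
which is the stated bound. There is no real obstacle here; the only subtlety worth flagging is that one must use $P \subseteq S$ to justify non-negativity before invoking Markov, since Markov is stated for non-negative variables. The lemma is essentially a packaging result that converts the average guarantee into a probabilistic pointwise guarantee.
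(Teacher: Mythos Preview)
Your proposal is correct and matches the paper's own proof essentially line for line: non-negativity of $h_S(G)-h_P(G)$ from $P\subseteq S$, then Markov's inequality combined with $w_G(S)-w_G(P)\le \epsilon\, w_G(P)$.
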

\noindent
Lemma \ref{lem:prob_control} operationally means that if $S$ is an average $\epsilon$-approximation of $P$ for small $\epsilon$, 
then $h_S(x) - h_P(x)$ can be large only for $x$ in a set that has small measure. 
In particular, the probability upper bound converges to $0$ as $\epsilon \to 0$. That is, $h_S(x) - h_P(x)$ converges 
to $0$ for all $x$ (but those in a set of measure-zero) as $\epsilon \to 0$.

\begin{proof}[Proof of Lemma \ref{lem:prob_control}]
	Note that $h_S(G) - h_P(G) \geq 0$ for all $g$ because $P \subseteq S$. 
	The conclusion follows from Markov's inequality and the observation that 
	$w_G(S) - w_G(P) \leq \epsilon \cdot w_G(P)$.
\end{proof}

Lastly, we revisit Definition \ref{defn:approx} to introduce an alternative relaxation of $\epsilon$-approximation, 
namely, the `dual' version of average $\epsilon$-approximation. Recall from \eqref{eqn:minkowski} that 
the gauge function of $S$ is defined as $p_S(x) := \inf \{ \lambda \in \RR : \lambda > 0  \text{ and } x \in \lambda S \}$. 
Observe that $P \subseteq S \subseteq ( 1 + \epsilon) P$ if and only if $\frac{1}{1+\epsilon}p_P(x) \leq p_S(x) \leq p_P(x)$ 
for all $x$.
When $P$ and $S$ are closed convex sets, $p_P(x) = h_{P^{\circ}}(x)$ and $p_S(x) = h_{S^{\circ}}(x)$ by Lemma \ref{lem:mink_supp}. 
Therefore, $S$ is an $\epsilon$-approximation of $P$ if and only if $\frac{1}{1+\epsilon}h_{P^{\circ}}(x) \leq p_{S^{\circ}}(x) 
\leq p_{P^{\circ}}(x)$ for all $x$.
As before, we ease the condition ``$\frac{1}{1+\epsilon}h_{P^{\circ}}(x) \leq p_{S^{\circ}}(x)$ for all $x$'' by averaging over $x$ 
to reach at the following definition.

\begin{definition}[dual-average $\epsilon$-approximation]\label{defn:average_approx_dual}
	Let $P$ be a set containing $0$. For $\epsilon > 0$, a set $S$ is a \emph{dual-average $\epsilon$-approximation} 
	of $P$, or \emph{$\epsilon$-approximation of $P$ in the dual-average sense}, 
	if $P \subseteq S$ and $w_G(S^{\circ}) \geq \frac{1}{1+\epsilon} w_G(P^{\circ})$. 
	Given two sets $P, S$ that contain $0$, we define
	\[
		\epsdavg(P, S) := \inf\{ \epsilon > 0: S \text{ is a dual-average }\epsilon \text{-approximation of }P \}.
	\]
\end{definition}

Note that dual-average $\epsilon$-approximation is also a weaker notion than $\epsilon$-approximation. That is, for a fixed $\epsilon > 0$, 
if $S$ is an $\epsilon$-approximation of $P$, then $S$ is also a dual-average $\epsilon$-approximation of $P$. In Section \ref{sec:results.1}, 
we use the notion of dual-average $\epsilon$-approximation as a technical tool to prove the hardness of $k$-PSD approximations of $\bS_+^n$.

The notion of dual-average $\epsilon$-approximation is closely related to the notion of average $\epsilon$-approximation; they are dual to 
each other. However, they are not equivalent notions of approximation, i.e., there exist convex sets $P, S$ such that $S$ is a good approximation 
of $P$ in the average sense, but not in the dual average sense. The opposite is also possible. See the next remark and Example \ref{exmp:avg_davg}.

\begin{remark}
	For $\epsilon > 0$, $S$ is an average $\epsilon$-approximation of $P$ if and only if $P^{\circ}$ is a dual-average $\epsilon$-approximation 
	of $S^{\circ}$. In other words, $\epsavg(P, S) = \epsdavg(S^{\circ}, P^{\circ})$. In this sense, the notion of dual-average $\epsilon$-approximation 
	is the dual of the notion of average $\epsilon$-approximation.
\end{remark}


\begin{example}[Ball, Needle, and Pancake]\label{exmp:avg_davg}
	Consider a $d$-dimensional unit $\ell_2$-ball and a `needle' obtained by taking the convex hull of the union of 
	the ball and two points that are located on the opposite side of the origin at distance $d$. The polar of this `needle' 
	is the `pancake' obtained by intersecting the unit ball with a slab of thickness $2/d$ along its equator. 
	These three sets are illustrated in Figure \ref{fig:avg_davg}. 
	We observe that the Gaussian width of the ball, the needle, and the pancake are approximately $\sqrt{d - 1/2}, d\sqrt{2/\pi}$, 
	and $\sqrt{d-3/2}$, respectively. Thus, the ball is a good approximation of the pancake in the average sense, but not in the dual-average sense. 
	Likewise, the needle is a good approximation of the ball in the dual-average sense, but not in the average sense.
\end{example}


\begin{figure}
	\centering
	\begin{tikzpicture}
	\draw [pattern=north west lines, pattern color=black!50] (-4,2) circle (1.5cm);
	\path [pattern=north east lines, pattern color=blue!10]
		(-4.75, 3.3) -- (-7,2) -- (-4.75, 0.7) arc[start angle=240, end angle=300, radius=1.5] -- (-3.25, 0.7) -- (-1,2) -- (-3.25, 3.3) arc [start angle=60, end angle=120, radius=1.5] -- cycle;
	\draw[blue] (-4.75, 3.3) -- (-7,2) -- (-4.75, 0.7);
	\draw[blue] (-3.25, 3.3) -- (-1,2) -- (-3.25, 0.7);
	
	\fill (-7,2) circle (1.5pt);
	\fill (-1,2) circle (1.5pt);
	
	\draw [pattern=north west lines, pattern color=black!10] (4,2) circle (1.5cm);
	\draw[blue] (3.5, 0.25) -- (3.5, 3.75);
	\draw[blue] (4.5, 0.25) -- (4.5, 3.75);
	\path [pattern=north east lines, pattern color=blue!50]
		(4.5, 0.6) -- (4.5, 3.4) arc[start angle=70, end angle=110, radius=1.5] -- (3.5, 3.4) -- (3.5, 0.6) arc [start angle=250, end angle=290, radius=1.5] -- cycle;
	
	\draw (-7,-0.15) -- (-7, 0.15);
	\draw (-4,-0.15) -- (-4, 0.15);
	\draw (-1,-0.15) -- (-1, 0.15);
	\draw[<->] (-7,0) -- (-4,0);
	\draw[<->] (-4,0) -- (-1,0) ;
	\node (A1) at (-5.5, -0.4) {$d$};
	\node (A2) at (-2.5, -0.4) {$d$};
	\draw (3.5,-0.15) -- (3.5, 0.15);
	\draw (4,-0.15) -- (4, 0.15);
	\draw (4.5,-0.15) -- (4.5, 0.15);
	\draw[<->] (3.5,0) -- (4,0);
	\draw[<->] (4,0) -- (4.5,0) ;
	\node (B1) at (3.65, -0.4) {$1/d$};
	\node (B2) at (4.35, -0.4) {$1/d$};

	\node (A_ball) at (-4, 4) {Ball};
	\node (A_needle) at (-1.5, 3) {Needle};
	\node (B_ball) at (6, 3) {Ball};
	\node (B_pancake) at (4, 4) {Pancake};	
	
	\end{tikzpicture}
	\caption{The sets described in Example \ref{exmp:avg_davg}. }
	\label{fig:avg_davg}
\end{figure}
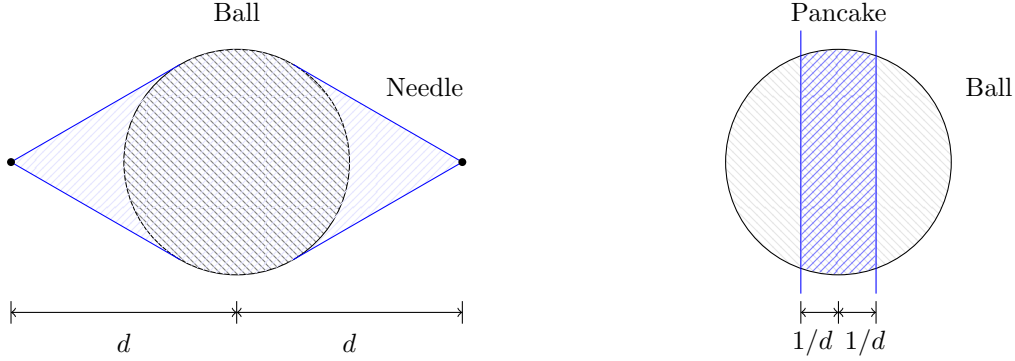

\subsection{Notions of Approximation for Cones}\label{sec:defn_cones}
Recall that our primary motivation for introducing the notions of approximation is 
to quantify the optimality gap that arises from a conic programming relaxation of the problem in \eqref{eqn:sdp}. 
Suppose that we are to relax 
the problem \eqref{eqn:sdp} by replacing the PSD cone $\bS_+^n$ with a larger cone $\cK \supseteq \bS_+^n$. 
Letting $P = \{ X \in \bS_+^n: ~ \langle A_i, X \rangle = b_i, ~ i = 1, \ldots, m \}$ and $S = \{ X \in \cK: ~ \langle A_i, X \rangle = b_i, ~ i = 1, \ldots, m \}$ 
denote the feasible sets of the original and the relaxed problems, we can see that $S \supseteq P$ and there arises 
an increase  in the optimal value, $\Gamma_{P, S}(C) := h_S(C) - h_P(C)$, as a result of the relaxation.

We extend the notions of approximation for sets, defined in Section \ref{sec:defn_sets}, to the notions for cones 
by fixing a certain affine constraint. Recall that for a cone $\cK$, we let $\base{\cK} := \big( \cK \cap H \big) - \frac{1}{n} I_n 
= \big\{ X - \frac{1}{n} I_n \in \bS^n: X \in \cK \cap H \big\}$ where $H = \{ X \in \bS^n: \tr~ X = 1 \}$ and $I_n$ denotes 
the $n \times n$ identity matrix. Note that $\base{\cK}$ is the feasible set of the problem \eqref{eqn:sdp}, translated 
by $-\frac{1}{n}I_n$, when the affine constraint in \eqref{eqn:sdp} is the unit trace constraint.
We define the notions of approximation for cones as follows.

\begin{definition}[$\epsilon$-approximation for cones in $\bS^n$]\label{defn:approx_cones}
	A cone $\cK \subseteq \bS^n$ is an $\epsilon$-approximation (average $\epsilon$-approximation / dual-average 
	$\epsilon$-approximation, resp.) of $\bS_+^n$ if $\base{\cK}$ is an $\epsilon$-approximation (average 
	$\epsilon$-approximation / dual-average $\epsilon$-approximation, resp.) of $\base{\bS_+^n}$. 
	Also, we let 
	\[
		\epsbasic( \bS_+^n, \cK ) := \epsbasic( \base{\bS_+^n}, \base{\cK} )
	\]
	 and define 
	$\epsavg( \bS_+^n, \cK )$ and $\epsdavg( \bS_+^n, \cK )$ in a similar manner.
\end{definition}


\begin{remark}\label{rem:width_Sn}
For later use, we remark here that $w_G\big( \base{\bS_+^n}\big) \leq \sqrt{2n}$ and that $\lim_{n \to \infty} 
\frac{w_G( \base{\bS_+^n})}{\sqrt{2n}} = 1$ because $\bS_+^n \cap H = \conv\{ vv^T: v \in \bbS^{n-1} \}$, 
cf. Lemma \ref{lem:largest_GOE} and Remark \ref{rem:tracy_widom}. 
\end{remark}

\section{$k$-PSD Approximations of $\bS_+^n$}\label{sec:results.1}
One option to relax the PSD constraint $X \in \bS_+^n$ in \eqref{eqn:sdp} is to enforce the PSD constraints 
only on the smaller $k \times k$ principal submatrices of $X$, which leads to the following relaxation:
\begin{equation}\label{eqn:sparse_sdp}
	\begin{aligned}
		\text{maximize}	\quad&\langle C, X\rangle\\
		\text{subject to}\quad&\langle A_i, X \rangle = b_i, \quad i = 1, \ldots, m,\\
						&k \times k \text{ principal submatrices of } X \in \bS_+^k.
	\end{aligned}
\end{equation}
Note that the PSD cone $\bS_+^n$ in \eqref{eqn:sdp} is replaced with a relaxed cone that is defined using ($k \times k$)-sized 
PSD constraints, and \eqref{eqn:sparse_sdp} can be solved more efficiently when $k \ll n$. 
For example, $k=1$ yields a linear programming (LP) approximation and $k=2$ produces a second-order cone programming (SOCP) 
approximation of the original SDP \cite{ahmadi2019dsos}.

In this section, we consider a scheme to approximate $\bS_+^n$ by enforcing $k \times k$ PSD constraints on particular subspaces. 
To be precise, we choose a fixed set of $k$-dimensional subspaces in $\RR^n$ and define a cone of $n \times n$ symmetric matrices 
that are PSD when restricted to these subspaces. The cone associated with \eqref{eqn:sparse_sdp} is an example of this construction 
that is obtained by imposing PSD constraints on the ${n \choose k}$ subspaces of $k$-sparse vectors in $\RR^n$, and will be referred 
to as the sparse $k$-PSD approximation of $\bS_+^n$.

In Section \ref{sec:restr_kPSD}, we formalize the definition of the $k$-PSD approximation and prove a lower bound on the number 
of $k \times k$ PSD constraints required. We show that when $k$ is much smaller than $n$, it is necessary to impose 
PSD constraints on at least exponentially many subspaces to produce a cone that approximates $\bS_+^n$ well.
In Section \ref{sec:coord_kPSD}, we discuss the sparse $k$-PSD approximation in more detail. 


\subsection{Lower Bound for $k$-PSD Approximations of $\bS_+^n$}\label{sec:restr_kPSD}
We recall the definition of the $k$-PSD approximation of $\bS_+^n$ from Definition \ref{def:restr_kPSD}.
\begin{definition}[$k$-PSD approximation of $\bS_+^n$; restatement of Definition \ref{def:restr_kPSD}]\label{def:gen_kPSD}
	Let $\cV = \{ V_1, \dots, V_N \}$ be a set of $k$-dimensional subspaces of $\RR^n$. 
	The $k$-PSD approximation of $\bS_+^n$ induced by $\cV$ is the convex cone
	\[
		\Snk( \cV ) := \big\{ X \in \bS^n: v^T X v \geq 0, ~\forall v \in V_i, ~\forall i = 1, \dots, N\}.
	\]
\end{definition}
\noindent
Note that $\Snk( \cV ) \supseteq \bS_+^n$ is the set of $n \times n$ symmetric matrices whose associated quadratic forms are 
positive semidefinite when restricted to $V_1 \cup \dots \cup V_N$. Thus, if $U_i \in \RR^{n \times k}$ is 
a matrix whose columns form a basis of $V_i$, 
then $\Snk( \cV ) = \big\{ X \in \bS^n: U_i^T X U_i \in \bS_+^k, ~\forall i = 1, \dots, N\}$.

Our first main theorem presents an upper bound on the Gaussian width of the base of the dual cone of 
$\Snk( \cV )$ as a function of $k$ and $N = | \cV |$.
 
\begin{theorem}\label{thm:main.0}
	Let $n$, $1 \leq k \leq n$ be positive integers and $\cV = \{ V_1, \dots, V_N \}$ be any set of $k$-dimensional subspaces of $\RR^n$. 
	Then 
	\[
		w_G\Big( \based{ \Snk( \cV ) } \Big)	
			\leq 	\sqrt{2k} + \sqrt{2 \log N }.
	\]
\end{theorem}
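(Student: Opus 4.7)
My plan is to compute $w_G\bigl(\based{\Snk(\cV)}\bigr)$ explicitly as the expected maximum of the largest eigenvalues of $N$ Gaussian slices, and then control this quantity with Gaussian Lipschitz concentration and a standard maximal inequality.

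\textbf{Step 1: unwind the Gaussian width.} For each $V_i$, fix $U_i \in \RR^{n \times k}$ with orthonormal columns spanning $V_i$, so that $\Snk(\cV) = \bigcap_{i=1}^N \{X \in \bS^n : U_i^T X U_i \in \bS_+^k\}$. Using the standard rule that the dual of an intersection of closed convex cones is the closed conic hull of the union of their duals, and that $\tr(U_i Y U_i^T) = \tr Y$ by orthonormality of $U_i$, the unit-trace slice reduces to
\[
\Snk(\cV)^* \cap H \;=\; \conv\bigl\{ U_i Y U_i^T : Y \in \bS_+^k, \; \tr Y = 1, \; i \in [N] \bigr\}.
\]
Since $\Snk(\cV)^* \subseteq \bS_+^n$ (because $\Snk(\cV) \supseteq \bS_+^n$ and $\bS_+^n$ is self-dual), this set is compact. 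Translating by $-\tfrac{1}{n}I_n$ does not affect Gaussian width, so for a standard Gaussian $G \in \bS^n$,
\[
w_G\bigl(\based{\Snk(\cV)}\bigr) = \bbE_G \sup_{Y \in \Snk(\cV)^* \cap H} \langle G, Y\rangle = \bbE_G \max_{i \in [N]} \lambda_1\bigl(U_i^T G U_i\bigr),
\]
where I used that over $\{Y \in \bS_+^k : \tr Y = 1\}$ the sup of $\langle A, Y\rangle$ equals $\lambda_1(A)$.

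\textbf{Step 2: bound the expected maximum.} Because each $U_i$ has orthonormal columns and $\sqrt{2}G$ is a $\GOE(n)$ matrix whose distribution is invariant under conjugation by any fixed orthogonal matrix, every $U_i^T G U_i$ has the standard Gaussian distribution in $\bS^k$. Lemma \ref{lem:largest_GOE} then gives $\bbE\lambda_1(U_i^T G U_i) \leq \sqrt{2k}$ for each $i$. To bound fluctuations, note that $\lambda_1$ is $1$-Lipschitz in operator norm (hence in Frobenius norm), and $\|U_i^T G U_i\|_F \leq \|U_i\|_{op}^2 \|G\|_F = \|G\|_F$, so the map $G \mapsto \lambda_1(U_i^T G U_i)$ is $1$-Lipschitz with respect to the Frobenius norm on $\bS^n$. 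Parameterizing $G$ by i.i.d. $N(0,1)$ coordinates yields an isometry onto $(\bS^n, \|\cdot\|_F)$, so by Lemma \ref{lem:gauss_conc} the centered variable $Z_i := \lambda_1(U_i^T G U_i) - \bbE\lambda_1(U_i^T G U_i)$ is sub-Gaussian with parameter $1$. Applying Lemma \ref{lem:maximal} to the $Z_i$ (viewed as sub-exponential with parameters $(1,0)$) gives $\bbE[\max_i Z_i] \leq \sqrt{2 \log N}$, and combining with the mean bound produces
\[
w_G\bigl(\based{\Snk(\cV)}\bigr) \;\leq\; \max_i \bbE\lambda_1(U_i^T G U_i) + \bbE\bigl[\max_i Z_i\bigr] \;\leq\; \sqrt{2k} + \sqrt{2 \log N}.
\]

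The bulk of the argument is straightforward once the Gaussian-width computation in Step 1 is in place, so I expect no serious analytic obstacle; the main thing that requires care is bookkeeping between three inner-product structures -- the Frobenius inner product on $\bS^n$, the normalization in Definition \ref{def:std_Gauss} under which $G$ is a \emph{standard} Gaussian (so that $\sqrt{2}G$ is $\GOE(n)$), and the unit-trace slice that produces $\based{\cdot}$. A small slip in any of these rescalings would change the constants $\sqrt{2k}$ and $\sqrt{2\log N}$ in the final bound. The fact that the $Z_i$ are dependent across $i$ is harmless: the maximal inequality only requires marginal sub-Gaussianity of each summand.
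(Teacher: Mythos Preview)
Your proof is correct and follows essentially the same route as the paper's own argument: identify $\based{\Snk(\cV)}$ with the (translated) convex hull of the sets $\{U_i Y U_i^T : Y\in\bS_+^k,\ \tr Y=1\}$, rewrite the Gaussian width as $\bbE_G\max_i \lambda_1(U_i^T G U_i)$, bound each mean by $\sqrt{2k}$ via Lemma~\ref{lem:largest_GOE}, and control the centered maximum by $\sqrt{2\log N}$ using the $1$-Lipschitz property of $G\mapsto\lambda_1(U_i^T G U_i)$ together with Gaussian concentration and the maximal inequality. Your additional remarks on the isometry between the coordinate parameterization of $G$ and $(\bS^n,\|\cdot\|_F)$, and on why $\|U_i^T M U_i\|_F\le\|M\|_F$, make explicit points the paper leaves implicit, but the underlying structure of the two proofs is the same.
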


Recall that $w_G\big( \based{\bS_+^n}\big) = w_G\big( \base{\bS_+^n}\big) \approx \sqrt{2n}$, cf. Remark \ref{rem:width_Sn}. Comparing 
the upper bound in Theorem \ref{thm:main.0} against $\sqrt{2n}$, we can contrast the size of $\based{ \Snk( \cV ) }$ relative to $\based{\bS_+^n}$. 
For example, when $k$ and $N$ are small, $\sqrt{2k} + \sqrt{2 \log N} \ll \sqrt{2n}$, and we can intuitively see that the dual of the cone $\Snk( \cV )$ is 
much smaller than the original PSD cone $\bS_+^n$. Therefore, the primal cone $\Snk( \cV )$ is too big to well approximate $\bS_+^n$ in such a case.

\begin{remark}\label{rem:union}
	Note that the upper bound in Theorem \ref{thm:main.0} holds regardless of the subspaces $V_1, \dots, V_N$ in $\cV$, 
	i.e., it is oblivious to the configuration of the subspaces. That is, this upper bound is valid even for the ``best'' possible configuration 
	of subspaces to imitate the expressive power of the full-sized PSD cone. We also note that this upper bound could conceivably be too conservative, 
	especially when $N$ is large, because it implicitly hinges on the union bound (through the use of Lemma \ref{lem:maximal}).
\end{remark}

\begin{proof}[Proof of Theorem \ref{thm:main.0}]
First of all, due to the translation invariance of the Gaussian width, we have 
\[
	w_G\Big( \based{ \Snk( \cV ) } \Big)
		= w_G\bigg( \Snk( \cV )^* \cap H - \frac{1}{n} I_n \bigg)
		= w_G\Big( \Snk( \cV )^* \cap H \Big).
\]

Next, we let $U_i \in \RR^{n \times k}$ be a matrix whose columns form an orthonormal basis of $V_i$ for each $V_i \in \cV$. 
We observe that $\Snk( \cV )^* = \cl \cone \big( \bigcup_{i \in [N]} \{ U_i Z U_i^T: Z \in \bS_+^k \} \big)$ 
because $(\bS_+^k)^* = \bS_+^k$ and $(C_1 \cap C_2)^* = \cl \cone( C_1^* \cup C_2^*)$, cf. Section \ref{sec:dual}. 
Thus, it follows that $\Snk( \cV )^* \cap H = \conv \Big( \bigcup_{i \in [N]} \{ U_i vv^T U_i^T: v \in \bbS^{k-1} \} \Big)$.
Note that $\langle G, U_i vv^T U_i^T \rangle = \langle U_i^T G U_i, v v^T \rangle$, and therefore,
\begin{align*}
	w_G\Big( \Snk( \cV )^* \cap H\Big)
		&= \bbE_{G} \bigg[ \sup_{i \in [N] \atop v \in \bbS^{n-1}} \big\langle U_i^T G U_i, vv^T \big\rangle \bigg]
			= \bbE_{G} \bigg[ \sup_{i \in [N]}  \lambda_1 ( U_i^T G U_i ) \bigg]\\
		&\leq \sup_{i \in [N]} \bbE_G \big[ \lambda_1 ( U_i^T G U_i ) \big]
			+ \bbE_{G} \bigg[ \sup_{i \in [N]} \Big(  \lambda_1 ( U_i^T G U_i ) - \bbE_G \big[ \lambda_1 ( U_i^T G U_i ) \big] \Big) \bigg].
\end{align*}

Note that for every $i \in [N]$, the random matrix $U_i^T G U_i \in \bS^k$ has the standard Gaussian distribution in $\bS^k$. 
By Lemma \ref{lem:largest_GOE}, $\bbE_{G} \big[ \lambda_1 ( U_i^T G U_i ) \big] \leq \sqrt{2k}$. 
Moreover, the function $G \mapsto \lambda_1( U_i^T G U_i)$ is $1$-Lipschitz, and therefore, the random variable 
$\lambda_1 ( U_i^T G U_i ) - \bbE_G \big[ \lambda_1 ( U_i^T G U_i )\big]$ is sub-Gaussian with sub-Gaussian parameter $1$ 
by Lemma \ref{lem:gauss_conc}. Lemma \ref{lem:maximal} implies that
$\bbE_{G} \Big[ \sup_{i \in [N]} \big(  \lambda_1 ( U_i^T G U_i ) - \bbE_G \big[ \lambda_1 ( U_i^T G U_i ) \big] \big) \Big] \leq \sqrt{2 \log N}$.

\end{proof}

Now we discuss how Theorem \ref{thm:main.0} implies the hardness of approximating $\bS_+^n$ with a small number of 
$k \times k$ PSD constraints. In the next corollary, we show that if $N = |\cV|$ is below a certain threshold determined by $n, k, \epsilon$, 
then $\Snk(\cV)$ cannot be a dual-average $\epsilon$-approximation of $\bS_+^n$. Thus, it cannot be an $\epsilon$-approximation of $\bS_+^n$, either. 

\begin{corollary}\label{coro:main.0}
	Let $n, k$ be positive integers such that $1 \leq k \leq n$, and $\epsilon > 0$. 
	If $ \Snk ( \cV )$ is a dual-average $\epsilon$-approximation of $\bS_+^{n}$, then $|\cV| \geq \exp \big( n \cdot \varphi(n, k, \epsilon)  \big)$ where
	\begin{equation*}
		\varphi(n, k, \epsilon) = \bigg[~ \frac{1}{1+\epsilon} \frac{w_G \big( \base{\bS_+^{n}} \big)}{\sqrt{2 n}} - \sqrt{\frac{k}{n}} ~ \bigg]_+ ^2.
	\end{equation*}
\end{corollary}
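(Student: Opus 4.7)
The plan is to combine Theorem \ref{thm:main.0} (which gives an upper bound on $w_G(\based{\Snk(\cV)})$ in terms of $k$ and $N$) with the definition of dual-average $\epsilon$-approximation to obtain a matching lower bound that forces $N$ to be large. The statement involves $w_G(\base{\bS_+^n})$ rather than $w_G(\base{\bS_+^n}^\circ)$, so the first subtle step is to translate the dual-average condition, which is phrased in terms of polars of bases, into a statement about bases of dual cones so that Theorem \ref{thm:main.0} becomes directly applicable.

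The first step is the dualization. By Lemma \ref{lem:base} applied with $e = \frac{1}{n} I_n$ (so that $\langle e, e\rangle = 1/n$ and $H_e = H$), for any cone $\cK \subseteq \bS^n$ containing $\bS_+^n$ we have, in the affine slice $H$ translated so that $e$ is the origin, the identity $\based{\cK} = -\frac{1}{n} \base{\cK}^\circ$. Since the Gaussian width is invariant under reflection through the origin and scales linearly under dilation, this gives $w_G(\based{\cK}) = \frac{1}{n} w_G(\base{\cK}^\circ)$. Applying this to $\cK = \bS_+^n$ and using the self-duality $(\bS_+^n)^* = \bS_+^n$ (so that $\based{\bS_+^n} = \base{\bS_+^n}$), I get $w_G(\base{\bS_+^n}^\circ) = n \cdot w_G(\base{\bS_+^n})$. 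So the dual-average hypothesis $w_G(\base{\Snk(\cV)}^\circ) \geq \frac{1}{1+\epsilon} w_G(\base{\bS_+^n}^\circ)$ rewrites as
\[
w_G\bigl(\based{\Snk(\cV)}\bigr) \;\geq\; \frac{1}{1+\epsilon}\, w_G\bigl(\base{\bS_+^n}\bigr).
\]

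The second step is to chain this lower bound with the upper bound from Theorem \ref{thm:main.0}, yielding
\[
\frac{1}{1+\epsilon}\, w_G\bigl(\base{\bS_+^n}\bigr) \;\leq\; \sqrt{2k} + \sqrt{2 \log N}.
\]
Rearranging gives $\sqrt{2 \log N} \geq \frac{1}{1+\epsilon} w_G(\base{\bS_+^n}) - \sqrt{2k}$. If the right-hand side is non-positive the claim is trivial, since $\varphi(n,k,\epsilon)$ is defined with a positive part and the inequality $N \geq 1$ is immediate; otherwise I square both sides, divide by $2$, and factor out $\sqrt{2n}$ inside the parenthesis to obtain
\[
\log N \;\geq\; n \cdot \left[\frac{1}{1+\epsilon} \frac{w_G(\base{\bS_+^n})}{\sqrt{2n}} - \sqrt{\frac{k}{n}}\right]^2 \;=\; n \cdot \varphi(n,k,\epsilon),
\]
and exponentiating yields the stated bound.

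There is no real obstacle beyond careful bookkeeping: the only place where one could slip is in the base/polar duality, since the identification $\based{\cK} = -\frac{1}{n}\base{\cK}^\circ$ holds in the affine subspace $H$ after translating $e$ to the origin, and one must verify that the resulting scaling factor $\frac{1}{n}$ cancels correctly on both sides of the approximation inequality (it does, precisely because $\bS_+^n$ is self-dual, so the same scaling factor appears on both sides). Once this is set up, everything is a one-line manipulation of the inequality from Theorem \ref{thm:main.0}.
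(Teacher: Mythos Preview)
Your proof is correct and follows essentially the same approach as the paper: both invoke Lemma~\ref{lem:base} to convert the dual-average hypothesis (stated in terms of polars of bases) into an inequality about $w_G(\based{\Snk(\cV)})$, use the self-duality of $\bS_+^n$ to identify $\based{\bS_+^n}=\base{\bS_+^n}$, and then chain with the upper bound from Theorem~\ref{thm:main.0} before rearranging. The only cosmetic difference is that you write the base/polar relation as $\based{\cK}=-\tfrac{1}{n}\base{\cK}^\circ$ while the paper writes the equivalent $\base{\cK}^\circ=-n\,\based{\cK}$.
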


\begin{proof}[Proof of Corollary \ref{coro:main.0}]
	Suppose that $ \Snk ( \cV )$ is a dual-average $\epsilon$-approximation of $\bS_+^{n}$. Then 
	by definition of the dual-average approximation (see Definitions \ref{defn:average_approx_dual} and \ref{defn:approx_cones}),
	\begin{equation}\label{eqn:assumption}
		w_G \Big( \base{ \Snk ( \cV ) }^{\circ} \Big) 
			\geq \frac{1}{1+\epsilon} w_G \Big( \base{ \bS_+^{n} }^{\circ} \Big).
	\end{equation}
	By Lemma \ref{lem:base}, we have $\base{ \Snk( \cV) }^{\circ} = -n \based{ \Snk( \cV )} $ 
	and $\base{ \bS_+^{n} \cap H }^{\circ} = -n \based{ \bS_+^{n} } = -n \base{ \bS_+^{n} }$ because $\bS_+^n$ is self-dual. 
	Thus, Theorem \ref{thm:main.0}, combined with the inequality \eqref{eqn:assumption}, implies
	\[
		 \frac{1}{1+\epsilon} w_G \Big( \base{ \bS_+^{n} } \Big)
		 	\leq w_G\Big( \base{ \Snk ( \cV )^* } \Big)	
			\leq 	\sqrt{2k} + \sqrt{2 \log |\cV|}.
	\]
	Note that this inequality holds if and only if 
	\[
		\sqrt{ \log |\cV| } \geq  \frac{1}{\sqrt{2}(1+\epsilon)} w_G \Big( \base{ \bS_+^{n} } \Big) - \sqrt{k},
	\]
	which is again equivalent to 
	\[
		|\cV| \geq \exp \bigg[ ~\frac{w_G \big( \base{\bS_+^{n} } \big) }{ \sqrt{2} (1+\epsilon)} - \sqrt{k} ~\bigg]_+ ^2
			= \exp \big( n \cdot \varphi(n,k,\epsilon) \big).
	\]
\end{proof}

\begin{remark}\label{rem:genkPSD_asymp}
	Recall from Remark \ref{rem:tracy_widom} that $\lim_{n \to \infty} w_G \big( \base{\bS_+^{n}} \big) / \sqrt{2n} = 1$. 
	With $k = \lfloor \delta n \rfloor$ for $0 < \delta < 1$, 
	\[
		\lim_{n \to \infty} \varphi \big(n, \lfloor \delta n \rfloor, \epsilon \big) = \bigg[~ \frac{1}{1+\epsilon} - \sqrt{\delta} ~\bigg]_+ ^2.
	\]
	That is, when $n$ is sufficiently large, $|\cV| \geq \exp \big( n \big[ 1/(1+\epsilon) - \sqrt{ \delta }\big]_+^2 \big)$ is necessary 
	for the cone $\Snk( \cV ) $ to be a dual-average $\epsilon$-approximation of $\bS_+^n$.
\end{remark}

As discussed in Remark \ref{rem:union}, our lower bound in Corollary \ref{coro:main.0} can be conservative due to the union bound. 
In fact, we do not know whether our lower bound is tight. Thus, it is possible that even if $N \geq \exp\big( n \cdot \varphi(n,k,\epsilon) \big)$, 
there does not exist any $\cV$ such that $|\cV| = N$ and $\Snk(\cV)$ is a dual-average $\epsilon$-approximation of $\bS_+^n$.


\subsection{Example: the Sparse $k$-PSD Approximation of $\bS_+^n$}\label{sec:coord_kPSD}
In this section, we consider the sparse $k$-PSD approximation, which is a concrete example of the $k$-PSD approximation of $\bS_+^n$ 
(Definition \ref{def:gen_kPSD}) discussed in the previous section. 

\begin{definition}[Sparse $k$-PSD approximation of $\bS_+^n$]\label{def:kPSD}
	Given positive integers $n$ and $1 \leq k \leq n$, the sparse $k$-PSD approximation of $\bS_+^n$ is the set
	\[
		\Snk := \big\{ X \in \bS^n: X_{I} \succeq 0, ~~\forall I \subset [n] \text{ with } |I| \leq k \big\}.
	\]
\end{definition}
\noindent
We observe that the sparse $k$-PSD approximation is an instance of the $k$-PSD approximation $\Snk(\cV)$ 
such that $\cV = \{ V_I:  I \in [n] \text{ with } |I| = k \}\}$ where $V_I = \{ v \in \RR^n: v_i = 0, ~\forall i \not\in I \}$.
Note that $|\cV| = {n \choose k}$.

In Section \ref{sec:k_coordinate_generic}, we examine the implications of Corollary \ref{coro:main.0}
for the sparse $k$-PSD approximation of $\bS_+^n$. In Section \ref{eqn:eps_approx}, we provide a more refined 
analysis that is tailored to $\Snk$, based on properties that are specific to $\Snk$. It turns out that we can 
derive stronger hardness results from the tailored approach.

\subsubsection{A Weak Bound Using Corollary \ref{coro:main.0}}\label{sec:k_coordinate_generic}
First of all, we inspect what the lower bound obtained in Section \ref{sec:restr_kPSD} implies for the sparse $k$-PSD 
approximation of $\bS_+^n$. According to the contrapositive of Corollary \ref{coro:main.0}, when $n$ and $\epsilon > 0$ are fixed, 
$\Snk$ cannot be a dual-average $\epsilon$-approximation of $\bS_+^{n}$ if $k$ satisfies the following inequality:
\begin{equation}\label{eqn:coordinate}
	{n \choose k} < 
		\exp \Bigg( n \cdot \bigg[~ \frac{1}{1+\epsilon} \frac{w_G \big( \base{\bS_+^{n}} \big)}{\sqrt{2 n}} - \sqrt{\frac{k}{n}} ~ \bigg]_+ ^2  \Bigg).
\end{equation}

Let's assume $k = \delta n$ for some $0 < \delta < 1$ and $n$ tends to infinity. By Stirling's approximation,
\[
	\log {n \choose k} = \big( 1 + o_n(1) \big) H_2 \bigg( \frac{k}{n} \bigg) n,
\]
where $H_2(p) = -p \log p - (1-p) \log (1-p)$ is the binary entropy function defined for $p \in [0,1]$. With this asymptotic approximation 
and the observation that $w_G \big( \base{\bS_+^{n}} \big) / \sqrt{2 n} \leq 1$, we take logarithm of both sides of 
\eqref{eqn:coordinate} to obtain the inequality (in the limit $n \to \infty$),
\begin{equation}\label{eqn:critical}
	H_2 ( \delta )
		<  \bigg[~ \frac{1}{1+\epsilon} - \sqrt{\delta}  ~ \bigg]_+ ^2.
\end{equation}

Given $\epsilon \geq 0$, let $g_{\epsilon}(\delta) := \big[~ \frac{1}{1+\epsilon} - \sqrt{\delta}  ~ \big]_+ ^2 - H_2(\delta)$.
Note that $g_{\epsilon}$ is strictly convex on the interval $\delta \in [0, 1]$ and $g_{\epsilon}(0) > 0$. 
Moreover, if $\epsilon > 0$, then $g_{\epsilon}( 1 / (1+\epsilon)^2 ) < 0$. By the intermediate value theorem, 
there exists a unique $0 < \delta^*(\epsilon) < 1 / (1+\epsilon)^2$ such that $g_{\epsilon}(\delta^*(\epsilon)) = 0$ 
and $ g_{\epsilon}(\delta) > 0$ for all $0 \leq \delta < \delta^*(\epsilon)$.
As a result, if $k/n < \delta^*(\epsilon)$, then $\Snk$ cannot be a dual-average $\epsilon$-approximation of $\bS_+^n$. 
The expressions on both sides of Eq. \eqref{eqn:critical} are illustrated in Figure \ref{fig:generic_kPSD_ftns} for 
a few values of $\epsilon$; the plot of $\delta^*(\epsilon)$ vs $\epsilon$ is depicted in Figure \ref{fig:generic_kPSD_delta}.

\begin{figure}[h]
	\centering
	\begin{subfigure}[b]{0.48\textwidth}
		\centering
		\includegraphics[width=\textwidth]{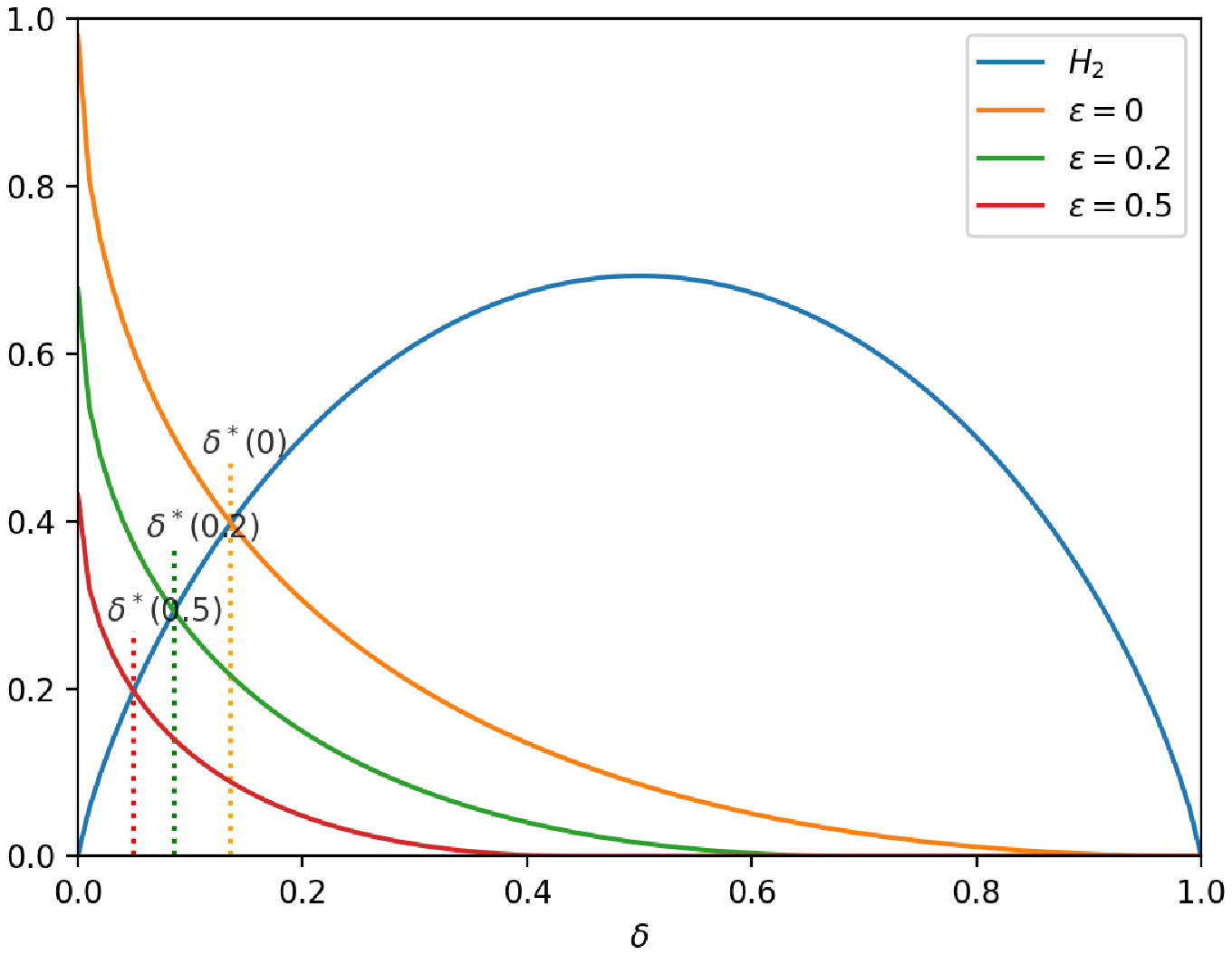}
		\caption{Plot of the expressions in Eq. \eqref{eqn:critical}: $H_2(\delta)$ (entropy) vs 
			$[~ 1/(1+\epsilon) - \sqrt{\delta}  ~ ]_+ ^2$ for $\epsilon = 0$, $0.2$, and $0.5$. 
			The location of $\delta^*(\epsilon)$ are also annotated.}
		\label{fig:generic_kPSD_ftns}
	\end{subfigure}
	\hfill
	\begin{subfigure}[b]{0.48\textwidth}
		\centering
		\includegraphics[width=\textwidth]{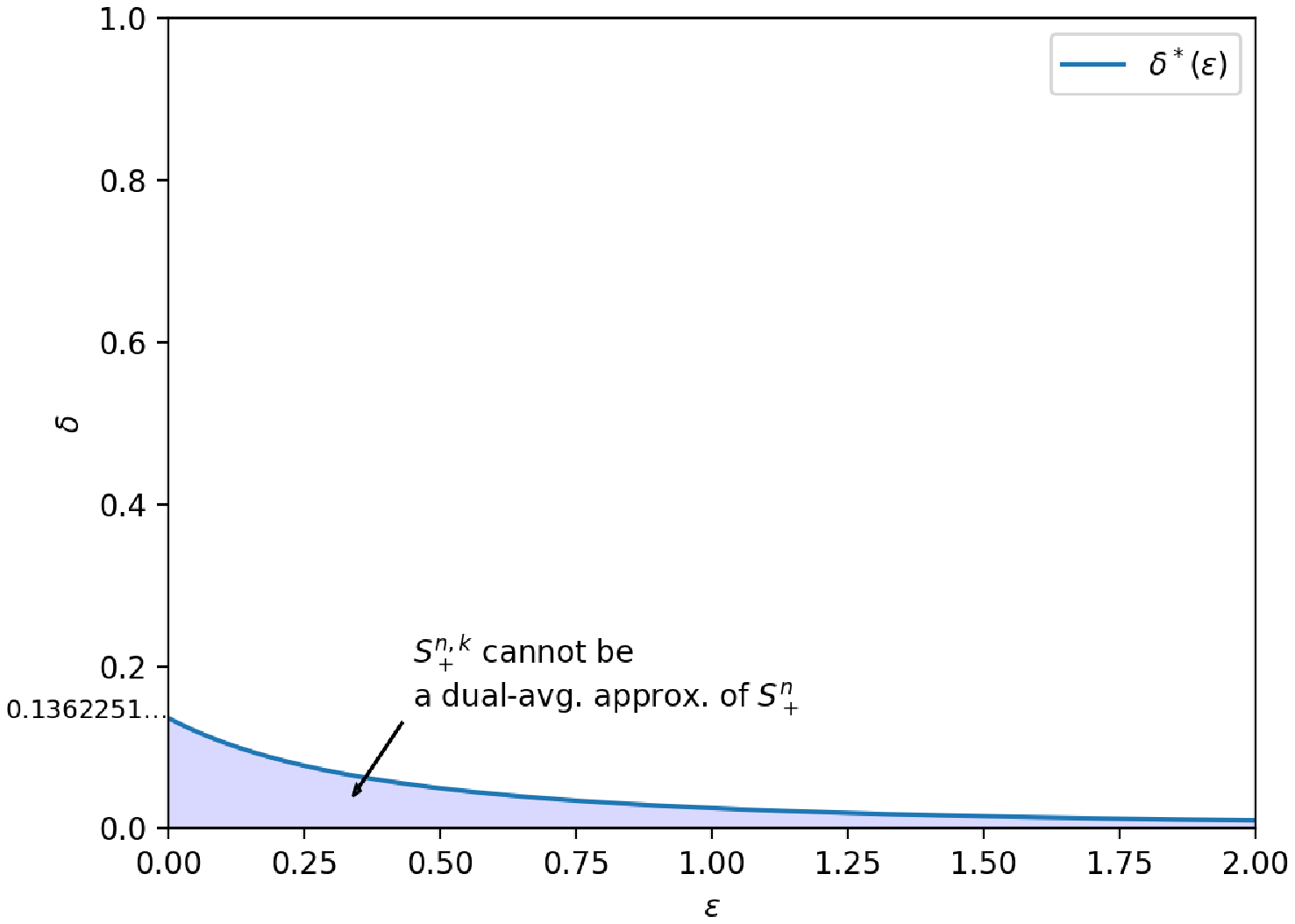}
		\caption{Plot of $\delta^*(\epsilon)$ vs $\epsilon$. 
			For a fixed $\epsilon> 0$, if $k/n$ is contained in the blue region, 
			$\Snk$ cannot be a dual-average $\epsilon$-approximation of $\bS_+^n$.}
		\label{fig:generic_kPSD_delta}
	\end{subfigure}
	\caption{Illustration of the hardness results obtained by applying Corollary \ref{coro:main.0} to the sparse $k$-PSD.}
	\label{fig:naive}
\end{figure}

Recall the definition of $\epsdavg(P, S) = \inf\{ \epsilon > 0: S \text{ is a dual-average }\epsilon 
\text{ approximation of }P \}$, which indicates the `best possible' (i.e., the smallest) $\epsilon > 0$ for which 
$S$ is a dual-average $\epsilon$-approximation of $P$. For fixed $n$ and $k$, the preceding discussion leads to 
a lower bound on $\epsdavg(\bS_+^n, \Snk)$ as
\begin{equation}\label{eqn:edavg_lower.1}
	\epsdavg(\bS_+^n, \Snk)	\geq \sup \Bigg\{ \epsilon > 0: 
		H_2 \Big( \frac{k}{n} \Big) <  \bigg[~ \frac{1}{1+\epsilon} - \sqrt{\frac{k}{n}}  ~ \bigg]_+ ^2 \Bigg\}
		=: \xi(k/n).
\end{equation}

On the one hand, we can already see from the above discussion that for any fixed $\epsilon > 0$, $\Snk$ with $k = o_n(n)$ 
cannot be an $\epsilon$-approximation of $\bS_+^n$ (in the dual-average sense). That is, $k$ must scale linearly with respect to $n$ 
for $\Snk$ to be a good approximation of $\bS_+^n$. 
On the other hand, the lower bound on $k$ from the discussion above -- $k/n \geq \delta^*(\epsilon)$ -- becomes uninformative 
once $k$ increases beyond a certain threshold because $\delta^*(\epsilon) < \delta^*(0) \approx 0.137$ for all $\epsilon > 0$. 
%
In other words, if $k/n > \delta^*(0)$, then we can only get a trivial lower bound $\epsdavg(\bS_+^n, \Snk) > -\infty$, 
and do not know whether $\Snk$ approximates $\bS_+^n$ well or not.

We remark that this is possibly due to the conservative nature of inequality \eqref{eqn:coordinate}, which is inherited from Corollary \ref{coro:main.0}. 
Recall that the cardinality lower bound from Corollary \ref{coro:main.0} is oblivious to the configuration of the subspaces $V_1, \dots, V_N$ in $\cV$. 
That is, it is valid even for the ``best'' possible configuration of subspaces to imitate the expressive power of the full-sized PSD cone.
Nevertheless, the subspaces of $k$-sparse vectors have overlaps, and some of them could be redundant. Thus, the general lower bound 
from Corollary \ref{coro:main.0} can be excessively conservative to apply to the sparse $k$-PSD approximation of $\bS_+^n$. 

Indeed, we can acquire a tighter lower bound for $\epsdavg( \bS_+^n, \Snk)$ by using the knowledge about the subspaces of $\Snk$. 
This is the topic that will be discussed in Section \ref{eqn:eps_approx}.

\subsubsection{A More Refined Analysis Tailored to $\Snk$}\label{eqn:eps_approx}
In this section, we derive lower bounds on $\epsbasic( \bS_+^n, \Snk)$ and $\epsdavg( \bS_+^n, \Snk)$ with an analysis that exploits 
specific properties of $\Snk$. More precisely, we construct a matrix on the boundary of $ B_H(\Snk)$ to argue a lower bound on 
$\epsbasic( \bS_+^n, \Snk)$, and characterize $\epsdavg( \bS_+^n, \Snk)$ by observing that the Gaussian width of $ B_H^*(\Snk)$ is 
the expectation of the largest $k$-sparse eigenvalue of a standard Gaussian random matrix. The resulting lower bounds imply stronger 
hardness results for approximating $\bS_+^n$ with $\Snk$ than those discussed in Section \ref{sec:k_coordinate_generic}.
%

\paragraph{Hardness of $\epsilon$-approximation}
First of all, we discuss how hard it is to approximate $\bS_+^n$ with $\Snk$ in the $\epsilon$-approximation 
sense (see Definition \ref{defn:approx}) when $k$ is small. For that purpose, we consider a specific matrix on the line segment connecting 
$\frac{1}{n} \bOne_n \bOne_n^T $ and $\frac{1}{n} I_n$ where $\bOne_n \in \RR^{n \times 1}$ denotes the $n \times 1$ column matrix with all entries equal to $1$. 
Specifically, we construct a matrix $M \in B_H(\Snk)$ that is far away from $\base{\bS_+^n}$, and prove a lower bound for $\epsilon > 0$ 
as a necessary condition for $M \in (1 + \epsilon) \cdot \base{\bS_+^n}$. 

\begin{proposition}\label{prop:e-approx}
	If $\Snk$ is an $\epsilon$-approximation of $\bS_+^n$, then $k > \frac{n-1}{1+\epsilon}$.
\end{proposition}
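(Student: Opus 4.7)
The plan is to exhibit a specific matrix $M \in B_H(\Snk)$ whose distance from $B_H(\bS_+^n)$ is large, and then convert that distance into a lower bound on the relaxation factor $\epsilon$. Following the cue in the paragraph preceding the statement, I would search along the line segment joining $\frac{1}{n}\bOne_n\bOne_n^T$ and $\frac{1}{n} I_n$, parameterizing candidates as
\[
	X = c\,\bOne_n\bOne_n^T + d\, I_n
\]
with the unit-trace normalization $c + d = 1/n$. This two-parameter family is well suited to the problem because both the eigenvalues of $X$ and the eigenvalues of every $k \times k$ principal submatrix admit closed-form expressions.

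Concretely, $X$ has eigenvalues $d$ (with multiplicity $n-1$) and $cn + d$ (simple), while every $k \times k$ principal submatrix of $X$ has the same rank-one-plus-multiple-of-identity structure $c\,\bOne_k\bOne_k^T + d\, I_k$, whose eigenvalues are $d$ and $kc + d$. Therefore $X \in \Snk$ iff $d \geq 0$ and $kc + d \geq 0$. To push $\lambda_{\min}(X) = cn+d$ as negative as the $\Snk$-constraint allows, I would saturate the second inequality by taking $kc + d = 0$; combined with $c + d = 1/n$, this uniquely fixes $c$ and $d$, yielding after a short calculation $\lambda_{\min}(X) = -(n-k)/\bigl(n(k-1)\bigr)$.

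Now set $M := X - \tfrac{1}{n} I_n \in B_H(\Snk)$. Assuming $\Snk$ is an $\epsilon$-approximation of $\bS_+^n$, we have $M \in (1+\epsilon)\,B_H(\bS_+^n)$, which unpacks to the PSD condition $\tfrac{1}{1+\epsilon} M + \tfrac{1}{n} I_n \succeq 0$, i.e., $\lambda_{\min}(X) \geq -\epsilon/n$. Substituting the value computed above yields $(n-k)/(k-1) \leq \epsilon$, and a single line of rearrangement gives $k(1+\epsilon) \geq n + \epsilon > n-1$, hence $k > (n-1)/(1+\epsilon)$. There is no real obstacle once the right witness is chosen; the only subtle point is that the bound relies on hitting the boundary of $\Snk$ along this segment (any interior choice gives a strictly weaker bound), and the case $k=1$ must be handled separately or absorbed by noting that the conclusion is then already implied by the easy observation that arbitrary off-diagonal perturbations keep $X$ in $\Sn{1} \cap H$ while driving $\lambda_{\min}$ to $-\infty$.
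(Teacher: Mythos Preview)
Your proposal is correct and follows essentially the same approach as the paper: both construct the identical witness matrix on the segment between $\tfrac{1}{n}\bOne_n\bOne_n^T$ and $\tfrac{1}{n}I_n$, saturate the $\Snk$ constraint to obtain $\lambda_{\min}(X)=-(n-k)/\bigl(n(k-1)\bigr)$, and then read off $\epsilon\geq (n-k)/(k-1)$. The only cosmetic differences are the parameterization (the paper writes $aP_1(n)+bP_2(n)$ with $P_1=\bOne\bOne^T/n$, $P_2=I-P_1$) and the final rearrangement, where the paper records the slightly sharper $k\geq \tfrac{n-1}{1+\epsilon}+1$ before concluding $k>\tfrac{n-1}{1+\epsilon}$; your inequality $k(1+\epsilon)\geq n+\epsilon>n-1$ gives the stated proposition directly.
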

\begin{proof}
	Let $P_1(n) := \bOne_n \bOne_n^T / n$ and $P_2(n) := I_n - P_1(n)$. Note that $P_1(n)$ and $P_2(n)$ are projection matrices. 
	For $a, b \in \RR$, we define 
	\[
		G(a,b;n) := a P_1(n) + b P_2(n).
	\] 
	It is easy to verify that the eigenvalues of $G(a,b;n)$ are $a$ with multiplicity $1$, and $b$ with multiplicity $n-1$.

	Next, recall from Definition \ref{def:kPSD} that $G(a,b;n) \in \Snk$ if and only if $G(a,b;n)_{[k]} \succeq 0$. 
	Observe that $G(a,b;n)_{[k]} = \frac{ka + (n-k)b}{n} P_1(k) + b P_2(k) \succeq 0$ if and only if $ka + (n-k)b \geq 0$ 
	and $b \geq 0$. 
	Letting $a = \frac{k-n}{n(k-1)}$ and $b = \frac{k}{n(k-1)}$, we observe that (1) $G(a,b;n) \in \Snk$ because 
	$ka + (n-k)b = 0$ and $b \geq 0$; and (2) $G(a,b;n) \in H$ because $\tr ~ G(a,b;n) = a + b (n-1) = 1$. 
	Next, we can also verify that $G(a,b;n) - \frac{1}{n} I_n \in (1+\epsilon) \cdot \base{\bS_+^n}$ if and only if $\epsilon \geq \frac{n-k}{k-1}$. 
	It is because $G(a,b;n) + \frac{\epsilon}{n} I_n = G\big(a + \frac{\epsilon}{n}, b+\frac{\epsilon}{n}; n \big) \in \bS_+^n$ if and only if 
	$a + \frac{\epsilon}{n} \geq 0$. Rewriting $\epsilon \geq \frac{n-k}{k-1} $ as a condition for $k$ in terms of $\epsilon$, 
	we obtain $k \geq \frac{n-1}{1+ \epsilon} + 1$. 
\end{proof}

Alternatively, when $k$ is fixed, Proposition \ref{prop:e-approx} implies that
\begin{equation}\label{eqn:edavg_lower.vanilla}
	\epsbasic(\bS_+^n, \Snk)	\geq \frac{n-k}{k-1} \geq \frac{1 - k/n}{k/n} =: \zeta(k/n).
\end{equation}

\paragraph{Hardness of dual average $\epsilon$-approximation}
Next, we re-examine how well $\Snk$ can approximate $\bS_+^n$ in the dual-average sense 
(Definition \ref{defn:average_approx_dual}) to find a better lower bound on $\epsdavg(\bS_+^n, \Snk)$. 
We use the duality between $\Snk$ and its dual cone, $ (\Snk)^* = \cone\{ vv^T: v \in \RR^n \text{ with } \| v \|_0 \leq k \}$, 
which is the cone of matrices that have factor width at most $k$ \cite{boman2005factor}.

Observe that $\bS_+^n \cap H = \{ X \in \bS_+^n: \tr(X) = 1 \} = \conv \{ vv^T: x \in \RR^n, ~\|v\|_2 = 1 \}$. For any $G \in \bS^n$, 
$\max_{X \in \bS_+^n \cap H} \langle G, X \rangle = \lambda_1 (G)$ and thus, $w_G\big( \bS_+^n \cap H \big)$ 
is equal to the expectation of the largest eigenvalue of a random matrix that has the standard Gaussian distribution in $\bS^n$ 
(Definition \ref{def:std_Gauss}). 
Likewise, $(\Snk)^* \cap H = \conv \{ vv^T: x \in \RR^n, ~\|v\|_2 = 1, ~\|v\|_0 \leq k  \}$, and 
$\max_{X \in (\Snk)^* \cap H} \langle G, X \rangle$ is the largest $k$-sparse eigenvalue of $G$. 
Based on these observations, we show an asymptotic upper bound on the ratio $w_G\big( B_H^*( \Snk ) \big) 
/ w_G( B_H^*(\bS_+^n) )$ in Proposition \ref{prop:upper_affine} that subsequently leads to a tighter lower bound on 
$\epsdavg(\bS_+^n, \Snk)$ in \eqref{eqn:edavg_lower.2}. 

\begin{proposition}\label{prop:upper_affine}
	Fix $0 < \delta < 1$ and let $k = \lfloor \delta n \rfloor$. Then
	\begin{equation}
		\lim_{n \to \infty} \frac{ w_G\big( B_H^* (\Snk ) \big) }{w_G\big( B_H^* (\bS_+^n) \big)} 	
			\leq 	\bigg( \int_0^{\delta} Q_{\chi^2}(1-s) ds \bigg)^{1/2},
	\end{equation}
	where $Q_{\chi^2}$ denotes the quantile function\footnote{That is, $Q_{\chi^2}(s) := \inf\{ x \in \RR : F_{\chi^2}(x) \geq s \}$ for $0 < s \leq 1$ 
	where $F_{\chi^2}$ be the cumulative distribution function of the $\chi^2$-distribution with one degree of freedom.} of the $\chi^2$-distribution with one degree of freedom. 
	Moreover,
	\begin{equation}\label{eqn:integral}
		\int_0^{\delta} Q_{\chi^2}(1-s) ds
			= \delta + \sqrt{\frac{2}{\pi}} \Phi^{-1} \bigg( 1 - \frac{\delta}{2}\bigg) \exp \bigg( - \frac{1}{2} \bigg[ \Phi^{-1} \Big( 1 - \frac{\delta}{2}\Big) \bigg]^2 \bigg)
	\end{equation}
	where $\Phi(x)$ is the cumulative distribution function of the standard normal distribution. 
\end{proposition}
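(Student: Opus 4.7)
The plan is to use a Slepian-type Gaussian comparison to reduce the problem of bounding the expected largest $k$-sparse eigenvalue of a standard Gaussian matrix in $\bS^n$ to controlling the supremum of a simpler linear Gaussian process, from which the quantile integral naturally emerges; the closed-form expression \eqref{eqn:integral} will then follow from a short integration by parts.

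First I would record that $w_G\big(\based{\Snk}\big) = \bbE_G\big[\sup_{v \in T_k} v^T G v\big]$ and $w_G\big(\based{\bS_+^n}\big) = \bbE_G\big[\lambda_1(G)\big]$, where $T_k := \{v \in \bbS^{n-1} : \|v\|_0 \leq k\}$ and $G$ is standard Gaussian in $\bS^n$; by Lemma \ref{lem:largest_GOE} together with Remark \ref{rem:tracy_widom}, the denominator satisfies $w_G\big(\based{\bS_+^n}\big) = \sqrt{2n}\,(1 + o_n(1))$, so the task reduces to an asymptotic upper bound on the numerator. To that end I would compare the Gaussian process $X_v = v^T G v$ indexed by $v \in T_k$ with the linear Gaussian process $Y_v = \sqrt{2}\, g^T v$ where $g \sim N(0, I_n)$. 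A direct calculation using $\text{Var}(v^T G v) = \|vv^T\|_F^2 = 1$ and $\text{Cov}(u^T G u, v^T G v) = \langle u, v\rangle^2$ gives
\[
\bbE(X_u - X_v)^2 = 2\big(1 - \langle u, v\rangle^2\big),
\qquad
\bbE(Y_u - Y_v)^2 = 2\|u-v\|^2 = 4\big(1 - \langle u, v\rangle\big),
\]
and the factorization $1 - \langle u,v\rangle^2 = (1-\langle u,v\rangle)(1+\langle u,v\rangle)$ together with $\langle u,v\rangle \leq 1$ yields $\bbE(X_u - X_v)^2 \leq \bbE(Y_u - Y_v)^2$. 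The Sudakov--Fernique form of Slepian's inequality then gives $\bbE \sup_{v \in T_k} X_v \leq \sqrt{2}\,\bbE\sup_{v \in T_k} g^T v$.

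Next I would evaluate $\sup_{v \in T_k} g^T v$. Maximizing $g^T v$ over unit vectors with support in a fixed set $I$ of size $k$ equals $\big(\sum_{i \in I} g_i^2\big)^{1/2}$ by Cauchy--Schwarz, and the further maximization over $|I|=k$ selects the top-$k$ values of $g_i^2$, so $\sup_{v \in T_k} g^T v = \big(\sum_{i=1}^k (g^2)_{(i)}\big)^{1/2}$ where $(g^2)_{(i)}$ denotes the $i$-th largest of $g_1^2, \ldots, g_n^2$. Jensen's inequality then gives $\bbE \sup_{v\in T_k} g^T v \leq \big(\bbE\sum_{i=1}^k (g^2)_{(i)}\big)^{1/2}$. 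Since the $g_i^2$ are i.i.d. $\chi^2$ random variables with one degree of freedom, a standard order-statistics argument (Glivenko--Cantelli combined with uniform integrability, which is immediate because each $(g^2)_{(i)}$ is dominated by $\max_i g_i^2$ with Gaussian-type tails) yields $n^{-1}\bbE \sum_{i=1}^k (g^2)_{(i)} \to \int_0^\delta Q_{\chi^2}(1-s)\,ds$ as $n \to \infty$ with $k = \lfloor \delta n\rfloor$. Combining these steps produces $\bbE \sup_{v\in T_k} X_v \leq \sqrt{2n\int_0^\delta Q_{\chi^2}(1-s)\,ds}\,(1+o_n(1))$, and dividing by $w_G\big(\based{\bS_+^n}\big) \sim \sqrt{2n}$ delivers the claimed limit.

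Finally, for the closed form \eqref{eqn:integral}, I would let $Z \sim N(0,1)$ and $\tau = \Phi^{-1}(1 - \delta/2)$ so that $\Pr(Z^2 \geq \tau^2) = \delta$; then $\int_0^\delta Q_{\chi^2}(1-s)\,ds = \bbE\big[Z^2 \mathbf{1}\{|Z| \geq \tau\}\big] = 2\int_\tau^\infty z^2 \phi(z)\,dz$, and integration by parts using $z\phi(z) = -\phi'(z)$ gives $2\tau\phi(\tau) + 2(1 - \Phi(\tau)) = \sqrt{2/\pi}\,\tau e^{-\tau^2/2} + \delta$, as required. The main subtlety is the Gaussian comparison step: Lemma \ref{lem:slepian} as stated requires matching variances, whereas the comparison here is between processes of unequal variance, so one must invoke the Sudakov--Fernique refinement (either directly, or by deriving it from Lemma \ref{lem:slepian} via the standard trick of adding an independent Gaussian and passing to a limit). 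Beyond that, the only remaining technical point is the order-statistics convergence, which is routine.
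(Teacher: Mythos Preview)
Your proposal is correct and follows essentially the same route as the paper: a Slepian-type comparison of $v^T G v$ against a linear Gaussian process, reduction to the top-$k$ order statistics of $g_i^2$, and the same integration by parts for \eqref{eqn:integral}. The only cosmetic differences are that the paper equalizes variances by adding an independent $\gamma\sim N(0,1)$ so that Lemma~\ref{lem:slepian} applies verbatim (rather than invoking Sudakov--Fernique on increments), and it passes to the limit via the normalization $\|g\|_2/\sqrt{2n}\to 1$ in probability instead of your Jensen step on the square root; both variants lead to the same bound.
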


Before we prove Proposition \ref{prop:upper_affine}, we note that it implies the following lower bound in the asymptotic limit $n \to \infty$:  
\begin{equation}\label{eqn:edavg_lower.2}
	\epsdavg(\bS_+^n, \Snk)	\geq \bigg( \int_0^{\delta} Q_{\chi^2}(1-s) ds \bigg)^{-1/2} - 1 =: \psi(k/n).
\end{equation}
See Figure \ref{fig:summary} (left) in Section \ref{sec:intro} to compare the three lower bounds, $\xi$ (Corollary \ref{coro:main.0} and \eqref{eqn:edavg_lower.1}), 
$\zeta$ (Proposition \ref{prop:e-approx} and \eqref{eqn:edavg_lower.vanilla}), and $\psi$ (Proposition \ref{prop:upper_affine} and \eqref{eqn:edavg_lower.2}). 
We make two remarks: one on the advantage of tailored analysis for $\Snk$; and the other on comparing the rate of convergence for $\zeta$ vs $\psi$.
\begin{itemize}
	\item
	(Generic vs tailored) 
	The lower bound $\psi$ gives a sharper lower bound than $\xi$. In particular, $\psi(\delta) > 0$ for all $0 < \delta < 1$ and $\psi$ 
	gracefully converges to $0$ as $k/n \to 1$, whereas $\xi(\delta) = 0 $ for all $\delta \geq \delta^*(0)$. 
	\item
	($\epsilon$-approx. vs dual-avg. $\epsilon$-approx.) 
	We can see from the expression in \eqref{eqn:integral} that $\psi(1-\delta) = \Theta_{\delta}(\delta^3)$ as $\delta \to 0$. 
	This sharply contrasts with $\varphi(1-\delta) = \Theta_{\delta}(\delta)$. 
	That is, $\Snk$ gets harder to approximate $\bS_+^n$ in both senses as $k$ diminishes from $n$, but at a much slower rate in the dual-average sense.
	
\end{itemize}


\begin{proof}[Proof of Proposition \ref{prop:upper_affine}]
Fix $k \in \{0, 1, \dots, n\}$. Let $T = \{ u \in \RR^n: \|u\|_2 \leq 1, \|u\|_0 \leq k \}$ and observe that 
\[
	w_G\big( B_H^* (\Snk ) \big) 
		= w_G\big( (\Snk )^* \cap H \big)
		= \bbE_G \bigg[ \sup_{u \in T} \inner{G}{ uu^T} \bigg].
\]
We consider a Gaussian process $(X_u)_{u \in T}$ such that $X_u = u^T G u + \gamma$ with $G$ being 
standard Gaussian in $\bS^n$ and $\gamma \sim N(0,1)$ independent of $G$. It is easy to verify that 
\[
	\bbE_G \bigg[ \sup_{u \in T} \inner{G}{ uu^T} \bigg]
		= \bbE_{G, \gamma} \bigg[ \sup_{u \in T} \big\{ u^T G u + \gamma \big\} \bigg]
		=   \bbE_{G, \gamma} \bigg[  \sup_{u \in T} X_u \bigg].
\]

Next, we introduce an instrumental Gaussian process $(Y_u)_{u \in T}$ such that $Y_u = g^Tu$ with $g \sim N(0, 2 I_n)$. 
It is easy to check that for all $u, v \in T$, (1) $\bbE X_u = \bbE Y_u = 0$; (2) $\bbE X_u^2 = \bbE Y_u^2 = 2$; 
and (3) $\bbE X_u X_v - \bbE Y_u Y_v = (1 - u^T v )^2 \geq 0$. Now we can apply Slepian's lemma 
(Lemma \ref{lem:slepian}) to obtain $\bbE_{G, \gamma} \big[ \sup_{u \in T} X_u \big] 
\leq \bbE_{g \sim N(0, 2I_n)} \big[ \sup_{u \in T} Y_u \big]$. 
Then it follows that 
\[
	w_G\big( B_H^* (\Snk ) \big) 
		=   \bbE_{G, \gamma} \bigg[  \sup_{u \in T} X_u \bigg]
		\leq \bbE_{g \sim N(0, 2I_n)} \big[ \sup_{u \in T} Y_u \big]
		= \bbE_{g \sim N(0, 2I_n)} \sup_{u \in \RR^n \atop \|u\|_2\leq 1, \|u\|_0 \leq k} g^T u.
\]
Therefore,
\[
	\frac{1}{\sqrt{2n}} w_G\big( B_H^* (\Snk ) \big)  
		\leq \bbE_{g \sim N(0, 2I_n) } \Bigg[ \frac{\| g \|_2}{ \sqrt{2n} } \sup_{u \in \RR^n \atop \|u\|_2\leq 1, \|u\|_0 \leq k} \frac{ g^T u } { \| g \|_2 } \Bigg].
\]
Note that when $g \sim N(0, 2I_n)$, $\frac{\| g \|_2}{ \sqrt{2n}} \to 1$ in probability as $n \to \infty$. 
Thus, it suffices to identify the limit of $\sup_{u \in \RR^n \atop \|u\|_2\leq 1, \|u\|_0 \leq k} \frac{ g^T u } { \| g \|_2 }$ (in probability)
to compute the expectation on the right-hand side. 

Given $x \in \RR^n$, we let $(x_i^2)^{\downarrow}$ denote the $i$-th largest element in the set $\{ x_1^2, x_2^2, \dots, x_n^2 \}$. 
Observe that 
\[
	\sup_{u \in \RR^n \atop \|u\|_2\leq 1, \|u\|_0 \leq k} \frac{g^T u }{\|g\|_2}
		= \frac{1}{\|g\|_2} \frac{ \sum_{i=1}^k (g_i^2)^{\downarrow} }{ \sqrt{ \sum_{i=1}^k (g_i^2)^{\downarrow} }} 
		= \bigg( \frac{1}{\|g\|_2^2} \sum_{i=1}^k (g_i^2)^{\downarrow} \bigg)^{1/2}
\]
and that $(g_1^2)^{\downarrow} \geq (g_2^2)^{\downarrow} \geq \dots \geq (g_n^2)^{\downarrow}$ are $\chi^2$ order statistics of degree 1, 
multiplied by a factor of 2. It is well known from literature on extreme order statistics (e.g., \cite[Theorem 2.7]{o2016eigenvectors}) that 
for any fixed $0 < \delta < 1$,
\[
	\frac{1}{\| g \|_2^2} \sum_{i=1}^{\lfloor \delta n \rfloor} (g_i^2)^{\downarrow} 
		\longrightarrow
		\int_0^{\delta} Q_{\chi^2}(1-s) ds		\qquad\text{in probability as }n \to \infty.
\]

Combining these observations and the well-known fact that $\lim_{n \to \infty} \frac{w_G( B_H^* (\bS_+^n) )} {\sqrt{2n}} = 1$, 
cf. Remark \ref{rem:width_Sn}, we obtain the desired inequality: 
\[
	\lim_{n \to \infty} \frac{ w_G\big( B_H^* (\Snk ) \big) }{w_G\big( B_H^* (\bS_+^n) \big)} 	
		= \lim_{n \to \infty} \frac{\sqrt{2n}}{w_G\big( B_H^* (\bS_+^n) \big)}  \lim_{n \to \infty} \frac{ w_G\big( B_H^* (\Snk ) \big) }{\sqrt{2n}} 
		\leq 	\bigg( \int_0^{\delta} Q_{\chi^2}(1-s) ds \bigg)^{1/2}.
\]
We conclude the proof by computing the integral in the upper bound. An explicit formula for the integral is well known; see \cite[Remark 2.8]{o2016eigenvectors}, for example.
\begin{align*}
	\int_0^{\delta} Q_{\chi^2}(1-s) ds
		&= 2 \int_{ \Phi^{-1}( 1 - \frac{\delta}{2})}^{\infty} s^2 \Phi' (s) ds
		= \delta + \sqrt{\frac{2}{\pi}} \Phi^{-1} \bigg( 1 - \frac{\delta}{2}\bigg) \exp \bigg( - \frac{1}{2} \bigg[ \Phi^{-1} \Big( 1 - \frac{\delta}{2}\Big) \bigg]^2 \bigg).
\end{align*}
\end{proof}

\paragraph{Hardness of average $\epsilon$-approximation}
As a matter of fact, we can derive the following corollary from Proposition \ref{prop:upper_affine} by applying Urysohn's inequality 
(Lemma \ref{lem:urysohn}), thereby obtaining an asymptotic lower bound on $\epsavg(\bS_+^n, \Snk)$ (see Definition \ref{defn:average_approx}).

\begin{corollary}\label{prop:avg_e-approx}
	Fix $0 < \delta < 1$ and let $k = \lfloor \delta n \rfloor$. Then
	\[
		\lim_{n \to \infty} \frac{ w_G\big( B_H (\Snk ) \big) }{w_G\big( B_H (\bS_+^n) \big)} 	
			\geq \frac{1}{4} \bigg( \int_0^{\delta} Q_{\chi^2}(1-s) ds \bigg)^{-1/2}.
	\]
\end{corollary}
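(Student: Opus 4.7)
My plan is to derive the primal Gaussian width bound from the dual one (Proposition \ref{prop:upper_affine}) by applying Urysohn's inequality (Lemma \ref{lem:urysohn}) to the convex body $K := \base{\Snk}$ viewed inside the affine hyperplane $H - \frac{1}{n} I_n$, whose dimension is $d_n := \binom{n+1}{2} - 1$, and then combining this with the normalization $w_G = \kappa_{d_n} \cdot w$ where $\kappa_{d_n}^2 \sim n^2/2$ asymptotically.

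First I would verify that $K$ is a genuine convex body containing the origin in its interior (relative to the hyperplane): the inclusion $\Snk \supseteq \bS_+^n$ gives $K \supseteq \base{\bS_+^n}$, whose relative interior contains $0$ because $\frac{1}{n} I_n$ lies in the relative interior of $\bS_+^n \cap H$; boundedness follows from pointedness of $\Snk$, since testing $v^T X v = 0$ on $v = e_i$ and $v = e_i + e_j$ (which are all $k$-sparse for $k \geq 2$) forces every entry of $X$ to vanish. Next, I would invoke Lemma \ref{lem:base} with $e = \frac{1}{n} I_n$ (so $\langle e, e \rangle = \frac{1}{n}$) to identify the polar (taken inside the translated hyperplane) as $K^\circ = -n \cdot \based{\Snk}$, so that reflection-invariance of the Gaussian width yields
\[
w_G(K^\circ) \;=\; n \cdot w_G\bigl(\based{\Snk}\bigr).
\]

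Urysohn's inequality in dimension $d_n$ states $w(K) \cdot w(K^\circ) \geq 1$, which after multiplying through by $\kappa_{d_n}^2$ becomes $w_G(K) \cdot w_G(K^\circ) \geq \kappa_{d_n}^2$; substituting the identity above gives the key inequality
\[
w_G\bigl(\base{\Snk}\bigr) \;\geq\; \frac{\kappa_{d_n}^2}{n \cdot w_G\bigl(\based{\Snk}\bigr)}.
\]
To conclude, I would plug in Proposition \ref{prop:upper_affine} in the form $w_G\bigl(\based{\Snk}\bigr) \leq (1+o(1)) \sqrt{2n \cdot I(\delta)}$, which uses the self-duality of $\bS_+^n$ and Remark \ref{rem:width_Sn} so that $w_G\bigl(\based{\bS_+^n}\bigr) = w_G\bigl(\base{\bS_+^n}\bigr) \sim \sqrt{2n}$, where $I(\delta) := \int_0^\delta Q_{\chi^2}(1-s)\, ds$. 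Dividing both sides by $w_G\bigl(\base{\bS_+^n}\bigr) \sim \sqrt{2n}$, the dimensional factors collapse as $\frac{n^2/2}{n \cdot \sqrt{2n \cdot I(\delta)} \cdot \sqrt{2n}} = \frac{1}{4 \sqrt{I(\delta)}}$, producing exactly the claimed lower bound.

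The main bookkeeping challenge is making sure the polar is taken inside the correct $d_n$-dimensional affine subspace so that the factor $\kappa_{d_n}^2 \sim n^2/2$ arising from Urysohn exactly cancels the product of the two $\sqrt{2n}$-scalings associated with $\based{\bS_+^n}$ and $\base{\bS_+^n}$, producing the clean constant $\frac{1}{4}$; once the polar-duality identity and the ambient dimension are set up correctly, the rest is routine asymptotic arithmetic.
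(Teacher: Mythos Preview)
Your proposal is correct and follows essentially the same route as the paper's proof: both use Lemma \ref{lem:base} to identify $\base{\Snk}^{\circ} = -n\,\based{\Snk}$, apply Urysohn's inequality in the $d_n$-dimensional hyperplane to obtain $w_G\big(\base{\Snk}\big) \geq \kappa_{d_n}^2 / \big(n\, w_G(\based{\Snk})\big)$, and then feed in Proposition \ref{prop:upper_affine} together with $\kappa_{d_n}^2 \sim n^2/2$ and $w_G(\base{\bS_+^n}) \sim \sqrt{2n}$ to extract the constant $\tfrac{1}{4}$. Your extra care in checking that $\base{\Snk}$ is a genuine convex body (via pointedness of $\Snk$ for $k\geq 2$) is a detail the paper leaves implicit.
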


Corollary \ref{prop:avg_e-approx} implies that $\epsavg(\bS_+^n, \Snk) \geq \frac{1}{4} \big( \int_0^{\delta} Q_{\chi^2}(1-s) ds \big)^{-1/2} - 1$. 
Note that this lower bound is more conservative than the lower bound for $\epsdavg(\bS_+^n, \Snk)$ in \eqref{eqn:edavg_lower.2}, 
due to the additional multiplier $1/4$ that arises from the use of Urysohn's inequality. 
It might be possible to derive a better lower bound for $\epsavg(\bS_+^n, \Snk)$, which is beyond the scope of this paper.
	
\begin{proof}[Proof of Corollary \ref{prop:avg_e-approx}]
	By Lemma \ref{lem:base}, we observe that $B_H( \Snk ) = - \frac{1}{n} B_H^*( \Snk )^{\circ}$. 
	It follows from Lemma \ref{lem:urysohn} that $w\big( B_H( \Snk ) \big) \geq \frac{1}{n \cdot w\big( B_H^*( \Snk ) \big)}$, 
	and therefore, 
	\[
		w_G\big( B_H( \Snk ) \big) \geq \frac{\kappa_d^2}{n \cdot w_G\big( B_H^*( \Snk ) \big)}
	\]
	where $d = {n+1 \choose 2} - 1$ is the dimension of $H$. 
	Since $\kappa_d^2 \geq d - \frac{1}{2}$, we obtain for any $0 < \delta < 1$,
	\[
		\lim_{n \to \infty} \frac{w_G \big( B_H( \Snk) \big)}{\sqrt{2n}}  
			\geq \lim_{n \to \infty} \frac{\sqrt{2n}}{w_G \big( B_H^*( \Snk) \big)} \frac{1}{2n^2} \bigg\{ {n+1 \choose 2 } - \frac{3}{2} 	\bigg\}
			= \frac{1}{4 f(\delta)}.
	\]
\end{proof}

\section{Approximate Extended Formulations of $\bS_+^n$}\label{sec:result.extended}
Now we further extend our discussion beyond the $k$-PSD approximation. 
Specifically, we consider an arbitrary approximation of $\bS_+^n$ through extended formulations. 
This defines a much broader class of approximations as we are allowed to introduce as many new variables 
as we want. However, even in this case, at least superpolynomially many $k \times k$ PSD constraints 
are required to approximate $\bS_+^n$ when $k \ll n$. 
In Section \ref{sec:main_thms}, we present our two main theorems about the extension complexity lower bounds 
that hold for any $\epsilon$-approximation of $B_H(\bS_+^n)$. 
Sections \ref{sec:proof_thm.1} and \ref{sec:proof_thm.2} are dedicated to the proof of the theorems. 

\subsection{Theorem Statements}\label{sec:main_thms}
Recall that $\base{\bS_+^n} = \bS_+^n \cap H - \frac{1}{n} I_n$. In this section, we present two main theorems on the hardness 
of approximating $\base{\bS_+^n}$ with a small number of $k \times k$ PSD constraints. Our first theorem is about 
an $\bS_+^k$-extension complexity lower bound that holds for any $\epsilon$-approximation of $\base{\bS_+^n}$. 

\begin{theorem}\label{thm:main.1}
	There exists a constant $C > 0$ such that if $S$ is an $\epsilon$-approximation of $\base{\bS_+^n}$, then 
	\[
		\xc_{\bS_+^k}(S) \geq
			\exp \bigg( C \cdot \min \bigg\{ \sqrt{ \frac{ n }{ 1+\epsilon} }, ~ \frac{1}{1+\epsilon}~ \frac{n}{k} \bigg\} \bigg).
	\]
\end{theorem}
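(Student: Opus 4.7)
The plan is to adapt the Fourier-analytic lower-bound technique of Fawzi \cite{fawzi2018polyhedral} from the polyhedral case ($k=1$) to arbitrary $k$. First, I would invoke the generalized Yannakakis theorem (Lemma \ref{lem:gen_yannakakis}): if $\xc_{\bS_+^k}(S) \leq r$ and $\base{\bS_+^n} \subseteq S \subseteq (1+\epsilon)\base{\bS_+^n}$, then the slack operator of the pair $(P,Q) := (\base{\bS_+^n}, (1+\epsilon)\base{\bS_+^n})$ admits a $(\bS_+^k)^r$-factorization, reducing the task to a lower bound on $\rank_{\bS_+^k}(s_{P,Q})$. I would then restrict $s_{P,Q}$ to Boolean test points: taking the extreme points $x_w = ww^T/n - I_n/n$ of $P$ and, via Lemma \ref{lem:base} together with self-duality of $\bS_+^n$, the extreme points $y_u = (1+\epsilon)^{-1}(I_n - uu^T)$ of $Q^{\circ}$, both indexed by $w,u \in \{\pm 1\}^n$, a direct calculation yields $M(w,u) = 1 - (1+\epsilon)^{-1}(1 - \langle w,u\rangle^2/n)$. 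Freezing a direction $u^*$ and setting $f(w) := M(w,u^*)$, the multilinear expansion of $\langle w,u^*\rangle^2$ shows that $f$ carries all of its non-constant Fourier mass in degree two, with $\|\proj_2 f\|_2 = \Theta(1/(1+\epsilon))$, $\bbE f = 1$, and $\|f\|_\infty = \Theta(n/(1+\epsilon))$.

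Given the factorization, I write $f(w) = \sum_{i=1}^r f_i(w)$ with $f_i(w) = \langle A_i(w), B_i(u^*)\rangle \geq 0$. For each $i$, I would decompose $f_i = f_i^{\sharp} + f_i^{\flat}$ by splitting $B_i(u^*) \in \bS_+^k$ along its spectral decomposition at an eigenvalue threshold parametrized by $\Lambda$. This matrix-level splitting (as opposed to scalar level-set truncation) is essential when $k > 1$: it preserves the bilinear $\langle A_i(w), B_i^{\sharp}\rangle \geq 0$ structure needed to invoke hypercontractivity, and it pays a factor proportional to $k$ in the threshold budget, which is precisely what produces the second term $n/((1+\epsilon) k)$ in the minimum. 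Lemma \ref{lemma:harmonic} applied to the normalized sharp part gives $\|\proj_2 f_i^{\sharp}\|_2 \lesssim \bbE f_i \cdot \log \Lambda$, while for the flat part I would use the sub-exponential bound on $\proj_2(f_i)$ from Lemma \ref{lemma:subexp_tail} to control $\|\proj_2 f_i^{\flat}\|_2$ in terms of $\bbE f_i^{\flat}$ (a variance-type estimate), rather than Fawzi's pointwise tail bound.

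Summing via the triangle inequality $\|\proj_2 f\|_2 \leq \sum_i (\|\proj_2 f_i^{\sharp}\|_2 + \|\proj_2 f_i^{\flat}\|_2)$, applying Cauchy--Schwarz across $i$ under the constraint $\sum_i \bbE f_i = 1$, and optimizing $\Lambda$ should force $r \geq \exp\bigl(C \min\{\sqrt{n/(1+\epsilon)}, n/((1+\epsilon) k)\}\bigr)$, which is the desired bound. The hardest part will be the spectral sharp/flat decomposition in the second step: a naive pointwise truncation of the scalar $f_i$ destroys the bilinear PSD structure that underlies the factorization, so the truncation must be carried out on $B_i(u^*)$ itself, while simultaneously maintaining uniform control on $\|A_i(w)\|_{op}$ across all $w$ (using $\sum_i f_i(w) \leq \|f\|_\infty$). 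The replacement of tail-probability comparisons by variance (second-moment) comparisons via Lemma \ref{lemma:subexp_tail} is the second technical novelty, and is essential for keeping the lower bound superpolynomial once $k = \Omega_n(\sqrt{n})$, where the original Fawzi-type Chernoff comparison becomes vacuous.
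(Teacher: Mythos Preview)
Your high-level reduction (Yannakakis, Boolean restriction, spectral sharp/flat split, hypercontractivity) tracks the paper's strategy, but the proposal has a structural gap that makes the argument fail as written. By freezing a single $u^*$ and measuring the signal via $\|\proj_2 f\|_2$, you obtain a quantity of order $1/(1+\epsilon)$, \emph{not} growing with $n$: indeed $\proj_2 f(w)=\frac{2}{(1+\epsilon)n}\sum_{i<j}u^*_iu^*_jw_iw_j$ has $\|\proj_2 f\|_2^2=\frac{2(n-1)}{(1+\epsilon)^2 n}=\Theta\big((1+\epsilon)^{-2}\big)$. Against this $O(1)$ signal, the triangle inequality $\|\proj_2 f\|_2\le\sum_i\|\proj_2 f_i^{\sharp}\|_2+\|\proj_2 f_i^{\flat}\|_2$ followed by Cauchy--Schwarz under $\sum_i\bbE f_i=1$ cannot force $r$ to be superpolynomial; under the hypercontractive bound $\|\proj_2(h)\|_2\le e\,m\log(\Lambda/m)$ for $0\le h\le\Lambda$, $\bbE h=m$, the adversary can make the RHS as large as $e\log(\Lambda r)$, and the resulting inequality $1/(1+\epsilon)\lesssim\log(\Lambda r)$ gives nothing. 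The paper avoids this by \emph{not} freezing $y$: it takes the inner product of $s(\cdot,y)$ with $q_y(x)=(x^Ty)^2-n$ and then averages over $y$, which amounts to evaluating $\proj_2(s(\cdot,y))$ on the diagonal $y$ and taking $\bbE_y$; this produces the much larger signal $\frac{2(n-1)}{1+\epsilon}$. Crucially, keeping $g_i(y)$ in play lets the paper use the normalization $\sum_i\tr g_i(y)=1$ to convert the sum over $i$ into a \emph{maximum} (via $\sum_i\langle F_i,g_i(y)\rangle\le\max_i\|F_i\|_{op}$), and then the $y$-average of this maximum is controlled by the maximal inequality (Lemma~\ref{lem:maximal}), yielding the $\log(N\cdot 3^k)$ factor. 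Your triangle-inequality route throws away both mechanisms.

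Two smaller points: you have the roles of sharp and flat reversed---Lemma~\ref{lemma:harmonic} (hypercontractivity) applies to the \emph{flat} (bounded-by-$\Lambda$) component, while the sharp component is handled by its small support; and the paper performs the spectral split on $f_i(x)$ (the $x$-side, as a function of $x$), not on the fixed matrix $B_i(u^*)$. Finally, Lemma~\ref{lemma:subexp_tail} is not used to bound any $\|\proj_2 f_i^{\flat}\|_2$; it feeds the sub-exponential parameters of each $\proj_2(v^T\ff_i v)(y)$ into the maximal inequality over $i$ and over an $\enet$-net of $\bbS^{k-1}$---that net is precisely where the factor $k$ enters and produces the $n/((1+\epsilon)k)$ branch of the minimum.
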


Theorem \ref{thm:main.1} suggests that at least $\Omega_n(\exp(\sqrt{n}))$ copies of $\bS_+^k$ are required
to approximate $\bS_+^n$ when $k = O_n(\sqrt{n})$. When $k = \Omega_n( \sqrt{n} )$, this extension complexity 
lower bound gracefully decreases to $1$ as $k$ increases to $n$. We remark that Theorem \ref{thm:main.1} holds 
for arbitrary $k$, and thus, extends the result of Fawzi \cite[Theorem 1]{fawzi2018polyhedral} beyond the special case of $k=1$. 
A more formal version of Theorem \ref{thm:main.1} and its proof are deferred until Section \ref{sec:proof_thm.1}.

Next, we consider the $\bS_+^k$-extension complexity of a set that is an average $\epsilon$-approximation of $\base{\bS_+^n}$.
\begin{theorem}\label{thm:main.2}
	There exists a constant $C > 0$ such that if $S$ is an average $\epsilon$-approximation of $\base{\bS_+^n}$, then 
	\[
		\xc_{\bS_+^k}(S) \geq
			\exp \bigg( C \cdot \min \bigg\{ \Big( \frac{ n }{ (1+\epsilon)^2} \Big)^{1/3} , ~ \frac{1}{1+\epsilon} \sqrt{\frac{n}{k}} \bigg\} \bigg).
	\]
\end{theorem}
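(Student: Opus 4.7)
The plan is to mirror the proof of Theorem \ref{thm:main.1} in Section \ref{sec:proof_thm.1}, but to replace the variance comparison used there with a tail-probability argument, in order to accommodate the weaker average-sense approximation guarantee. First I sandwich $\base{\bS_+^n}$ between an inner polytope and $S$: define $P := \textrm{conv}\{p_x : x \in H_n\}$ with $p_x = (xx^T - I_n)/n$, so $P \subseteq \base{\bS_+^n} \subseteq S$. By Lemma \ref{lem:gen_yannakakis} this yields $\xc_{\bS_+^k}(S) \geq \rank_{\bS_+^k}(s_{P,S})$, and it suffices to lower bound the number $N$ of matrix factors in any hypothetical $\bS_+^k$-factorization $s_{P,S}(p_x, y) = \sum_{i=1}^N \inner{A_i(x)}{B_i(y)}$ with $A_i(x), B_i(y) \in \bS_+^k$.

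Next I draw a random dual direction $y_G := \beta G$, where $G$ is a standard Gaussian random matrix in $\bS^n$ and $\beta^{-1}$ is chosen slightly larger than $(1+\epsilon)\, w_G(\base{\bS_+^n})$. Lemma \ref{lem:prob_control} applied to the pair $(\base{\bS_+^n}, S)$, together with Gaussian concentration of $h_{\base{\bS_+^n}}$ via Lemma \ref{lemma:gaussian_conc}, allow me to arrange $\Pr[y_G \in S^{\circ}] \geq 1/2$. On this good event, the slack $s(p_x, y_G) = 1 - (\beta/n)(x^T G x - \tr G)$ is a nonnegative multilinear polynomial of degree $2$ in $x \in H_n$, and a direct Fourier computation shows $\| \proj_2 s(\cdot, y_G) \|_2 \asymp \beta \asymp \big((1+\epsilon)\sqrt{n}\big)^{-1}$ for a typical $G$. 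This is the lower bound side.

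For the upper bound, I decompose each $A_i(x) = A_i^{\sharp}(x) + A_i^{\flat}(x)$ via a spectral cutoff $\Lambda$, giving $f_i(x) := \inner{A_i(x)}{B_i(y_G)} = f_i^{\sharp}(x) + f_i^{\flat}(x)$. The flat pieces $f_i^{\flat}(x)$ are uniformly bounded by $\Lambda \cdot \|B_i(y_G)\|_{op}$, so Lemma \ref{lemma:harmonic} (hypercontractivity) gives $\|\proj_2 f_i^{\flat}\|_2 \lesssim \bbE_x f_i^{\flat} \cdot \log \Lambda$; summing and using $\sum_i \bbE_x f_i = \bbE_x s = 1$ bounds the flat contribution to $\|\proj_2 s\|_2$ by $O(\log \Lambda)$. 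The sharp contribution is controlled by a Markov- or sub-exponential-type estimate (Lemmas \ref{lemma:subexp_tail} and \ref{lem:maximal}) applied to $\proj_2 f_i^{\sharp}$, yielding a term of order $N \cdot \Lambda^{-\alpha}$ for a suitable exponent $\alpha > 0$. Comparing with the $\asymp \beta$ lower bound from the previous paragraph and optimizing $\Lambda$ produces the two regimes stated in the theorem: the optimal balance of $\beta$ against $\log \Lambda$ yields the small-$k$ exponent $(n/(1+\epsilon)^2)^{1/3}$, whereas the budget of up to $k$ eigenvalues per $A_i(x)$ enters the sharp-part estimate and gives the large-$k$ exponent $(1+\epsilon)^{-1}\sqrt{n/k}$.

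The principal obstacle is the matrix extension of the sharp/flat decomposition, which the paper already flags as the core technical innovation for Theorem \ref{thm:main.1}. When $k=1$, one simply truncates the scalar $a_i(x)$ at $\Lambda$; when $k > 1$, one must split each PSD matrix $A_i(x) \in \bS_+^k$ into two PSD summands in a way that (i) preserves measurability in $x$, (ii) keeps $f_i^{\flat}$ uniformly $L^{\infty}$-bounded independently of the spectral alignment between $A_i(x)$ and $B_i(y_G)$, and (iii) produces a sharp part whose total mass admits the sub-exponential tail bound above. The natural candidate is to project $A_i(x)$ onto its eigenspaces above and below an operator-norm (or trace) threshold, but verifying compatibility with Lemma \ref{lemma:harmonic} and tracking how the factor $k$ enters the sharp estimate is the delicate part. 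Once this decomposition is in place, the probabilistic control from Lemma \ref{lem:prob_control} replaces the pointwise control used in Theorem \ref{thm:main.1}, and this substitution is precisely the source of the weaker $n^{1/3}$ and $\sqrt{n/k}$ exponents obtained here relative to the $\sqrt{n}$ and $n/k$ exponents of Theorem \ref{thm:main.1}.
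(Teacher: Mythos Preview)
Your comparison collapses at the numerical level. You propose to lower bound $\|\proj_2 s(\cdot,y_G)\|_2$ by $\beta \asymp \big((1+\epsilon)\sqrt{n}\big)^{-1}$ and to upper bound the flat contribution by $O(\log\Lambda)$; but $\beta \to 0$ as $n\to\infty$ while $\log\Lambda \geq 1$, so the inequality $\beta \lesssim N\Lambda^{-\alpha} + \log\Lambda$ is satisfied for every $N\geq 1$ and yields no lower bound. The paper avoids this by not comparing $\|\proj_2 s\|_2$ at all. Instead it pairs both sides of the factorization identity with the \emph{unnormalized} quadratic $q_G(x)=-x^TGx$ and only then takes the conditional expectation over $G$ in a ``good'' Gaussian set $\GG$; this makes the left-hand side scale like $\sqrt{n}/(1+\epsilon)$ rather than $1/\sqrt{n}$. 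Equally important, taking $\bbE_G$ first turns each flat term $\bbE_x\big[q_G(x)\,v^T\ff_i(x)v\big]$ into a centered Gaussian in $G$ with variance $2\|\proj_2(v^T\ff_i v)\|_2^2$, so the max over the $N\cdot 9^k$ indices is handled by the Gaussian maximal inequality and contributes $\log\Lambda\cdot\sqrt{\log(9^kN)}$ rather than $\log\Lambda\cdot\log(9^kN)$. This is precisely what produces the cubic inequality in $z=\sqrt{\log(9^kN)}$ and hence the exponents $(n/(1+\epsilon)^2)^{1/3}$ and $(1+\epsilon)^{-1}\sqrt{n/k}$; your approach, which freezes a single $y_G$ and bounds the flat part deterministically, cannot access this $\sqrt{\log}$ saving.

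You also misidentify the principal difficulty. The spectral sharp/flat decomposition of $f_i$ is taken over verbatim from the proof of Theorem~\ref{thm:main.1} (see \eqref{eqn:decomposition}) and poses no new issue here; in particular the compatibility with Lemma~\ref{lemma:harmonic} is handled exactly as before, via the $\enet$-net on $\bbS^{k-1}$ and the normalization \eqref{eqn:normalization} (which you omit, leaving the bound $f_i^\flat\leq\Lambda\|B_i(y_G)\|_{op}$ uncontrolled). The genuinely new ingredients for Theorem~\ref{thm:main.2} are (i) defining the Gaussian surrogate $\GG$ so that $S\subseteq 10(1+\epsilon)\sqrt{n}\,\GG^{\circ}$ with high probability, using only the average-approximation hypothesis through Gaussian concentration of $h_S$, and (ii) exploiting that after integrating over $G$ the flat contributions are Gaussian, so the maximal inequality gives the square root. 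Your outline captures (i) in spirit but misses (ii) entirely, and without (ii) the argument cannot close.
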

\noindent
Theorem \ref{thm:main.2} is a stronger result than Theorem \ref{thm:main.1} because it provides an extension complexity 
lower bound for a broader range of sets that approximate $\base{\bS_+^n}$. Again, this result subsumes \cite[Theorem 2]{fawzi2018polyhedral} 
as a special case for $k=1$. Specifically, Theorem \ref{thm:main.2} states that even if we relax the notion of approximation, 
we still need at least superpolynomially many number of $k \times k$ PSD constraints to approximate $\bS_+^n$ when $k$ is small, 
namely, when $k$ is smaller than $\frac{1}{(1+\epsilon)^2}\frac{n}{\log^2 n}$.
A more formal version of Theorem  \ref{thm:main.2} and its proof can be found in Section \ref{sec:proof_thm.2}.

Theorem \ref{thm:main.1} and Theorem \ref{thm:main.2} imply that any set that well approximates $\base{\bS_+^n}$ must have 
$\bS_+^k$-extension complexity at least superpolynomially large in $n$ for all $k$ much smaller than $n$. Thus, we conclude that 
it is impossible to approximate $\base{\bS_+^n}$ using only polynomially many $k \times k$ PSD constraints, for \emph{any} construction 
of the approximating set. Note that these  are stronger hardness results than those discussed in Section \ref{sec:results.1}, which only apply 
to the $k$-PSD approximations. Lastly, we mention that we do not know whether our lower bounds are tight. Thus, it might be possible 
to achieve even stronger lower bounds by means of a more sophisticated analysis.

\subsection{Proof of Theorem \ref{thm:main.1}}\label{sec:proof_thm.1}
Let $c = \max \big\{ \sqrt{c_1 / 2 \log 3}, ~ \sqrt{2} c_2 \big\}$ denote an absolute constant with $c_1, c_2>0$ being 
the constants that appear in Lemma \ref{lemma:hanson_wright}. We state a full version of Theorem \ref{thm:main.1} as follows. 
\begin{theorem}\label{thm:main_full.1}
	If $S$ is an $\epsilon$-approximation of $\base{\bS_+^n}$, then for all positive integer $1 \leq k \leq n$,
	\begin{align*}
		\log \xc_{\bS_+^k}(S)
			\geq - \frac{ \alpha + \beta }{2} + \sqrt{ \Big( \frac{ \alpha - \beta }{2} \Big)^2 + \gamma}
	\end{align*}	where
	\[
		\alpha = 2k \log 3, \qquad\beta = \log \Big(  \frac{n^3}{8 \cdot c \log 3 } \Big) , \qquad \gamma = \frac{1}{2 e \cdot c} \frac{n-1}{(1+\epsilon)}.
	\] 
\end{theorem}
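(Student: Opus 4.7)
}
The plan is to follow the hypercontractivity blueprint of Fawzi \cite{fawzi2018polyhedral}, but with two new ingredients: a matrix-valued sharp/flat decomposition of the $\bS_+^k$-factors, and a variance (rather than tail) comparison of the two representations of the degree-2 Fourier part of the slack. First I would reduce to the slack operator: setting $P := \base{\bS_+^n}$ and $Q := (1+\epsilon) P$, the hypothesis gives $P \subseteq S \subseteq Q$, so Lemma~\ref{lem:gen_yannakakis} and \eqref{eqn:yannakakis} reduce the task to lower-bounding $N := \rank_{\bS_+^k}(s_{P,Q})$. I would then restrict $s_{P,Q}$ to the hypercube $H_n = \{-1,1\}^n$ by pulling back the extreme points through the maps $x \mapsto X_x := \tfrac{1}{n}(xx^T - I_n) \in \ext(P)$ and $y \mapsto Y_y := \tfrac{1}{1+\epsilon}(I_n - yy^T) \in \ext(Q^\circ)$ (one checks $Y_y \in Q^\circ$ because $\max_{v \in \bbS^{n-1}} \langle vv^T - I_n/n,\, I_n - yy^T\rangle = 1$). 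A direct calculation using $\|x\|^2 = \|y\|^2 = n$ gives
\[
s_{P,Q}(X_x, Y_y) \;=\; 1 \;+\; \frac{2}{n(1+\epsilon)} \sum_{1 \leq i < j \leq n} x_i x_j\, y_i y_j,
\]
so the pure degree-$(2,2)$ Fourier component of the restricted slack has squared $L^2$-norm $\tfrac{2(n-1)}{n(1+\epsilon)^2}$.

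Next I would suppose $N = \rank_{\bS_+^k}(s_{P,Q})$ and pick a factorization $s_{P,Q}(X_x, Y_y) = \sum_{i=1}^N \langle A_i(x), B_i(y)\rangle$ with $A_i(x), B_i(y) \in \bS_+^k$. To bound the degree-2 component of each summand I would introduce a threshold $\Lambda > 0$ and decompose the matrix-valued factors as
\[
A_i(x) \;=\; A_i^{\sharp}(x) + A_i^{\flat}(x), \qquad A_i^{\sharp}(x) := A_i(x)\,\mathbf{1}\{\tr A_i(x) \geq \Lambda\},
\]
and symmetrically for $B_i$. Both summands stay in $\bS_+^k$, so $\langle A_i^{\flat}(x), B_i(y)\rangle \geq 0$, bounded pointwise by $\Lambda \cdot \|B_i(y)\|_{op}$. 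This is the matrix generalization of the scalar truncation used in \cite{fawzi2018polyhedral}: thresholding on the trace rather than on the operator norm is what avoids a spurious factor of $k$, because it matches the average-slack budget $\bbE_{x,y}\sum_i \tr A_i(x)\, \|B_i(y)\|_{op}$ directly. On the flat part I would apply Lemma~\ref{lemma:harmonic} to the nonnegative scalar function $\langle A_i^{\flat}(x), B_i(y)\rangle / \|B_i(y)\|_{op}$ (bounded by $\Lambda$ with bounded mean), yielding an $O(\log \Lambda)$ bound on $\|\proj_2^{(x)} \langle A_i^{\flat}, B_i(y)\rangle\|_2$. The sharp part is handled by a Markov-type estimate: $\Pr_x[\tr A_i(x) \geq \Lambda]$ is small, and the combined contribution to $\proj_2 s_{P,Q}$ from all sharp pieces is controlled by the total slack.

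The key departure from \cite{fawzi2018polyhedral} enters here. Instead of comparing tail probabilities of the two representations, I would compare their second moments: $\proj_2^{(x)}\langle A_i^\flat(x), B_i(y)\rangle$ is a multilinear quadratic form in $x$, so Lemma~\ref{lemma:subexp_tail} (built on the sub-Gaussian-chaos MGF of Lemma~\ref{lemma:hanson_wright}) gives sub-exponential parameters in terms of $\|\proj_2 \langle A_i^\flat,B_i(y)\rangle\|_2$, while the explicit degree-$(2,2)$ formula fixes the $L^2$-norm of $\proj_2 s_{P,Q}$ at $\sqrt{2(n-1)/[n(1+\epsilon)^2]}$. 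Summing over $i \in [N]$ and taking $\bbE_y$ yields, after optimizing $\Lambda$ to balance sharp and flat contributions, an inequality of the form $(\log N + \alpha)(\log N + \beta) \geq \gamma$ with exactly the constants declared in the theorem; solving the quadratic gives $\log N \geq -\tfrac{\alpha+\beta}{2} + \sqrt{((\alpha-\beta)/2)^2 + \gamma}$.

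\paragraph{Main obstacle.} The principal difficulty is Step~2–3, i.e.\ designing a matrix-valued sharp/flat split that (i) preserves the PSD cone structure so the factorization identity survives the truncation, (ii) reduces cleanly to the scalar hypercontractive estimate of Lemma~\ref{lemma:harmonic}, and (iii) does not pay an extra factor of $k$ anywhere. Thresholding by trace — not by operator norm — is essential, since only the trace couples correctly with the total expected slack $\bbE_{x,y}[s_{P,Q}(X_x,Y_y)] = 1$. The secondary obstacle is Step~3: a naive union-bound / tail-probability argument in the spirit of \cite{fawzi2018polyhedral} loses an exponent as soon as $k = \Omega(\sqrt n)$, and it is the switch to comparing variances (equivalently, degree-2 $L^2$-norms) via Lemmas~\ref{lemma:hanson_wright} and \ref{lemma:subexp_tail} that keeps the bound nontrivial in the regime $k = o(n/\log^2 n)$ promised by the theorem.
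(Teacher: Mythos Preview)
Your high-level outline---restrict the slack to the hypercube, factor it, split each matrix factor into sharp and flat pieces, apply hypercontractivity to the flat piece, Markov to the sharp piece, then solve a quadratic in $\log N$---matches the paper. But two of your concrete technical choices diverge from what is actually done, and as stated they do not close.

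First, the sharp/flat split. You threshold on $\tr A_i(x)$; the paper thresholds each eigenvalue of $f_i(x)$ separately, i.e.\ $f_i^{\flat}(x)=\sum_a \lambda_a\mathbf{1}\{\lambda_a\le\Lambda\}u_au_a^T$. The point of the eigenvalue cut is that it forces $\|f_i^{\flat}(x)\|_{op}\le\Lambda$ pointwise, so that for every fixed unit vector $v$ the \emph{scalar} function $x\mapsto v^Tf_i^{\flat}(x)v$ is in $[0,\Lambda]$. The paper then couples this with the normalization $\|\bbE_x f_i(x)\|_{op}=1$ (obtained by conjugating by $(\bbE_x f_i)^{1/2}$, which also forces $\sum_i\tr g_i(y)\equiv 1$) and an $\epsilon$-net $\mathcal N$ of $\bbS^{k-1}$: Lemma~\ref{lemma:harmonic} is applied to each $v^Tf_i^{\flat}v$ over $(i,v)\in[N]\times\mathcal N$, and then Lemma~\ref{lem:maximal} handles the maximum over $N\cdot 9^k$ sub-exponentials. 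The constant $\alpha=2k\log 3$ in the statement is exactly $\log(9^k)$; it cannot appear in your scheme because you never introduce the net. Your alternative---apply Lemma~\ref{lemma:harmonic} directly to $\langle A_i^{\flat}(x),B_i(y)\rangle/\|B_i(y)\|_{op}$---fails the mean hypothesis: even under the paper's normalization one only gets $\bbE_x\langle A_i^{\flat}(x),B_i(y)\rangle\le\langle\Pi_i,B_i(y)\rangle\le\tr B_i(y)$, so the ratio has mean up to $\tr B_i(y)/\|B_i(y)\|_{op}\le k$, not $1$; and after that you still need to control $\sum_i\|B_i(y)\|_{op}$, for which no budget is available.

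Second, the comparison functional. The paper does not compare the $L^2$-norm of the degree-$(2,2)$ block of $s$ with anything. It pairs both sides of the factorization identity against $q_y(x)=(x^Ty)^2-n$, which evaluates the degree-$2$ projection on the diagonal: $\langle f,q_y\rangle_\mu=2\,\proj_2 f(y)$. This turns the right-hand side into $\bbE_y\sum_i\langle\,\bbE_x[q_y(x)f_i(x)]\,,\,g_i(y)\rangle$, a quantity \emph{linear} in each factor, and yields the clean LHS $\frac{2(n-1)}{1+\epsilon}$ (your $\frac{2(n-1)}{n(1+\epsilon)^2}$ is the squared $L^2$-norm of a different object). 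The sub-exponential Lemmas~\ref{lemma:hanson_wright}--\ref{lemma:subexp_tail} enter only through the maximal inequality on the flat side, not as a direct ``variance comparison'' of two representations.
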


Now we discuss how Theorem \ref{thm:main.1} can be derived from Theorem \ref{thm:main_full.1}. 
Suppose that $n$ is sufficiently large, tending to infinity. 

\begin{itemize}
\item
When $k = o_n(\sqrt{ \frac{n}{1+\epsilon}})$, we observe that 
$\gamma \gg \max \{ \alpha^2, \beta^2 \}$, and therefore, $- \frac{ \alpha + \beta }{2} + \big\{ ( \frac{ \alpha - \beta }{2} )^2 + \gamma \big\}^{1/2} \approx \sqrt{\gamma}$.

\item
When $k = \omega_n(\sqrt{ \frac{n}{1+\epsilon}})$, $\alpha \gg \max \{ \beta, \sqrt{\gamma}  \}$. 
Thus, $ \big\{ ( \frac{ \alpha - \beta }{2} )^2 + \gamma \big\}^{1/2} \approx \frac{\alpha}{2} \big( 1 + \frac{4\gamma}{\alpha^2} \big)^{1/2} 
\approx \frac{\alpha}{2} \big( 1 + \frac{2\gamma}{\alpha^2}\big)$. As a result, 
$- \frac{ \alpha + \beta }{2} + \big\{ ( \frac{ \alpha - \beta }{2} )^2 + \gamma \big\}^{1/2} \approx \frac{\gamma}{\alpha}$.
\end{itemize}

In the rest of this section, we prove Theorem \ref{thm:main_full.1}. Our proof is based on similar arguments to those 
found in the proof of \cite[Theorem 1]{fawzi2018polyhedral}, but with appropriate adaptations. Indeed, our results 
can be seen as an extension of Fawzi's beyond the special case with $k=1$, which is made possible by introducing 
different notions of normalization, \eqref{eqn:normalization}, and decomposition of $\bS_+^k$-factors into sharp and flat components, \eqref{eqn:decomposition}.

\begin{proof}[Proof of Theorem \ref{thm:main_full.1}]
We begin with a rough sketch of the main ideas used in the proof. First, we consider the generalized slack matrix $s$ of the pair 
$\big( \base{\bS_+^n}, ~(1+\epsilon) \base{\bS_+^n} \big)$ restricted to the hypercube $H_n$. 
In light of the generalized Yannakakis theorem (Lemma \ref{lem:gen_yannakakis}), the $\bS_+^k$-extension complexity of $S$ 
is bounded from below by the $\bS_+^k$-rank of the slack matrix $s$, cf. \eqref{eqn:yannakakis}. 
Thus, it suffices to prove a lower bound for $\rank_{\bS_+^k}(s)$.

To this end, we express the slack matrix $s$ in two equivalent ways: 
one obtained from the knowledge about the extreme points of $\base{\bS_+^n}$, and the other obtained by assuming that 
$s$ admits a $\bS_+^k$-factorization having $N$ factors. Interpreting the extreme points of $\base{\bS_+^n}$ and $(1+\epsilon) \base{\bS_+^n}$ as formal 
variables, $x$ and $y$, we may view the two expressions of the slack matrix as bivariate polynomials. Next, we `smooth out' 
the two expressions with respect to one variable, $x$, by taking projection onto the harmonic subspace of degree 2; 
and then take expectation with respect to the other variable, $y$. Comparing the two resulting expressions, we derive 
a lower bound on the number of factors $N$, which implies a lower bound on the $\bS_+^k$-extension complexity of $S$.

\paragraph{Step 1. Slack Matrix and $\bS_+^k$-Factorization}
We consider the (generalized) slack operator associated to the pair $\big(\base{\bS_+^n}, ~(1+\epsilon)\base{\bS_+^n} \big)$. 
Let $\bbS^{n-1} = \{ x \in \RR^n: \| x \|_2 = 1\}$. Observe that the extreme points of $\base{\bS_+^n}$ are $\tilde{x}\tilde{x}^T - \frac{1}{n} I_n$ for 
$\tilde{x} \in \bbS^{n-1}$, and that $\big( (1+\epsilon)\base{\bS_+^n} \big)^{\circ} = - \frac{n}{1+\epsilon} \base{\bS_+^n}$. Thus, we are led to study 
the following infinite matrix:
\[
	(\tilde{x}, \tilde{y}) \in \bbS^{n-1} \times \bbS^{n-1} \mapsto 1 - \inner{ \tilde{x} \tilde{x}^T - \frac{1}{n} I_n } { -\frac{n}{1+\epsilon} \Big( \tilde{y} \tilde{y}^T - \frac{1}{n} I_n \Big) } 
		= \frac{n}{1+\epsilon} ( \tilde{x}^T \tilde{y})^2 + \frac{\epsilon}{1+\epsilon}.
\]

We consider the PSD rank ($\bS_+^k$-rank) of the finite submatrix restricted to $\tilde{x}, \tilde{y} \in \big\{ -\frac{1}{\sqrt{n}}, \frac{1}{\sqrt{n}} \big\}^n \subset \bbS^{n-1}$. 
Specifically, we consider the following matrix $s$ defined on the $n$-dimensional hypercube, 
with a proper reparametrization ($x = \sqrt{n} \tilde{x}$ and $y = \sqrt{n} \tilde{y}$):
\begin{equation}\label{eqn:slack_hypercube}
	s: (x,y) \in \{ -1, 1\}^n \times \{-1, 1\}^n \mapsto \frac{1}{1+\epsilon} \bigg( \frac{1}{n}(x^Ty)^2 + \epsilon \bigg).
\end{equation}
Assuming that we can write the matrix \eqref{eqn:slack_hypercube} as a sum of $N$ trace inner products of $\bS_+^k$ factors, we have
\begin{equation}\label{eqn:slack_two_repns_psd}
	 \frac{1}{1+\epsilon} \bigg( \frac{1}{n}(x^Ty)^2 + \epsilon \bigg) = s(x,y) = \sum_{i=1}^N \inner{ f_i(x) }{ g_i(y) },	\qquad \forall x, y \in H_n
\end{equation}
where $f_i, g_i: H_n \to \bS_+^k$ are some matrix-valued functions on $H_n$. 

In this proof, we use the two expressions of $s(x,y)$ in \eqref{eqn:slack_two_repns_psd} to derive a lower bound on $N$.
First, we fix $y \in H_n$ and `smooth out' the expressions on both sides of \eqref{eqn:slack_two_repns_psd} with respect to $x$ 
by taking projection onto the space of harmonic polynomials of degree $2$. Then we plug $x = y$ and consider the expectation of the smoothed functions with respect to $y \in H_n$. 

More precisely, for each fixed $y \in H_n$, we let $q_y(x) = (x^Ty)^2 - n$. Also, let $\mu$ denote the uniform probability measure on $H_n$. 
The inner product of any two functions $f, g: H_n \to \RR$ is defined as $\langle f, g \rangle_{\mu} = \bbE_{x \sim \mu} \big[ f(x) g(x) \big]$.
We observe that $ \langle f(x), q_y(x) \rangle_{\mu} = 2 ~ \proj_2 f(y)$. 

Taking the inner product of  both sides of \eqref{eqn:slack_two_repns_psd} with $q_y(x)$, we obtain
\begin{equation*}
	\bbE_{x \sim \mu}\bigg[ \frac{1}{(1+\epsilon) n} q_y(x)^2 + q_y(x) \bigg] = 
		\sum_{i=1}^N \inner{ \bbE_{x \sim \mu}\big[ q_y(x) f_i(x) \big] }{ g_i (y)}.
\end{equation*}
Subsequently, we get the following equation by taking expectation over $y \sim \mu$:
\begin{equation}\label{eqn:slack_two_repns_psd.a}
	\underbrace{\bbE_{y \sim \mu} \bbE_{x \sim \mu }\bigg[ \frac{1}{(1+\epsilon) n} q_y(x)^2 + q_y(x) \bigg]}_{=: LHS}
		 = 
		\underbrace{\bbE_{y \sim \mu } \sum_{i=1}^N \inner{\bbE_{x \sim \mu} \big[ q_y(x) f_i(x) \big] }{ g_i (y)}}_{=: RHS}.
\end{equation}

The rest of the proof is organized as follows. In Step 2, we compute the expectation on the left-hand side exactly. 
In Step 3, we derive an upper bound on the expectation on the right-hand side as a function of $N$. 
In the end, we obtain the desired lower bound on $N$ in Step 4 by comparing these two quantities.

\paragraph{Step 2. The Left-hand Side of \eqref{eqn:slack_two_repns_psd.a}.}
We evaluate the left-hand side of \eqref{eqn:slack_two_repns_psd.a} based on the following observations:
\begin{align*}
	\bbE_{x \sim \mu }\big[ (x^T y)^2 \big] 
		&= \bbE_{x \sim \mu} \Bigg[ \bigg( \sum_{i=1}^n x_i y_i \bigg)^2 \Bigg]
		= \sum_{i, j=1}^n y_i y_j \bbE_{x \sim \mu } [x_i x_j]
		= \sum_{i=1}^n \bbE_{x \sim \mu } [x_i^2] \\
		&= n,\\
	\bbE_{x \sim \mu}\Big[ (x^T y)^4 \Big] 
		&= \bbE_{x \sim \mu} \Bigg[ \bigg( \sum_{i=1}^n x_i y_i \bigg)^4 \Bigg]
		=  \sum_{i=1}^n \bbE_{x \sim \mu} [x_i^4] + 3 \sum_{i=1 \atop j \neq i}^n \bbE_{x \sim \mu} [x_i^2] \cdot \bbE_{x \sim \mu}[ x_j^2]\\
		&= n + 3 n(n-1).
\end{align*}
Therefore, $\bbE_{x \sim \mu}[ q_y(x) ] = \bbE_{x \sim \mu} \big[ (x^Ty)^2 - n \big] = 0$ and $\bbE_{x \sim \mu}[ q_y(x)^2 ] = \bbE_{x \sim \mu} \big[ (x^T y)^4 - 2n (x^T y)^2 + n^2 \big] = 2n(n-1)$. It follows that for any $y \in H_n$,
\begin{equation}\label{eqn:LHS}
	\bbE_{x \sim \mu}\bigg[ \frac{1}{(1+\epsilon) n} q_y(x)^2 + q_y(x) \bigg]
		= \frac{2}{1+\epsilon}(n-1).
\end{equation}
This does not depend on $y$, and therefore, $ LHS \text{ in }\eqref{eqn:slack_two_repns_psd.a} =  \frac{2}{1+\epsilon}(n-1)$.

\paragraph{Step 3. An Upper Bound for the Right-hand Side of \eqref{eqn:slack_two_repns_psd.a}.}
Now, we prove an upper bound on the right-hand side of \eqref{eqn:slack_two_repns_psd.a}, which has the form 
of an increasing function of $N$. This step is the most technical part of the proof, and is composed of four mini-steps. 

First of all, we claim that we may assume without loss of generality that the factor functions $f_i, g_i$ satisfy
\begin{equation}\label{eqn:normalization}
	\big\| \bbE_{x \sim \mu}[ f_i(x) ] \big\|_{op} = 1, ~~\forall i \in [N]
	\qquad\text{and}\qquad
	\sum_{i=1}^N \tr \big(g_i(y) \big) = 1, ~~\forall y \in H_n.
\end{equation}
Next, in Step 3-B, we decompose each $f_i$ into its sharp component $\fs_i$ and flat component $\ff_i$ 
with a fixed threshold $\Lambda \geq e$ whose value will be determined later in Step 4 of the proof; see \eqref{eqn:decomposition}.
Then due to linearity of expectation, we observe that $RHS = 2 \big[ \bbE_y \sum_{i=1}^N \proj_2 \fs_i(y) g_i(y) + \bbE_y \sum_{i=1}^N \proj_2 \ff_i(y) g_i(y) \big]$. 
Lastly, we prove upper bounds for the two terms separately in Step 3-C and Step 3-D.

The key idea is that for all $i \in [N]$, $\fs_i$ is supported only on a set of small measure due to the normalization, 
and $\| \proj_2 ( v^T \ff_i v ) \|_2 \leq e \log \Lambda$ for all $v \in \bbS^{k-1}$ due to hypercontractivity (Lemma \ref{lemma:harmonic}).

\subparagraph{Step 3-A: Normalization of Factor Functions $f_i, g_i$}
We claim that if $s(x,y)$ admits a $(\bS_+^k)^N$-factorization, then we may assume \eqref{eqn:normalization} 
without loss of generality. More precisely, we show it is possible to normalize arbitrary factor functions $\{ (\tilde{f}_i, \tilde{g}_i) \}_{i=1}^N$ 
to $\{ (f_i, g_i) \}_{i=1}^N$ so that the conditions in \eqref{eqn:normalization} are satisfied.

Suppose that $s(x,y)$, defined in \eqref{eqn:slack_hypercube}, admits a factorization $\{ (\tilde{f}_i, \tilde{g}_i) \}_{i=1}^N$ 
such that $\tilde{f}_i, \tilde{g}_i: H_n \to \bS_+^k$ and $s(x,y) = \sum_{i=1}^N \inner{ \tilde{f}_i(x) }{ \tilde{g}_i(y) }$ for all $x, y \in H_n$. 
For each $i \in [N]$, we can see that $\bbE_x[ \tilde{f}_i(x) ] \in \bS_+^k$, and therefore, we may define $W_i = \bbE_x[ \tilde{f}_i(x) ]^{1/2}$ 
to be the principal square root of $\bbE_x[ \tilde{f}_i(x) ]$. Let $W_i^{\dagger}$ denote the Moore-Penrose pseudoinverse of $W_i$.

Now for each $i \in [N]$, we let $f_i(x) = W_i^{\dagger} \tilde{f}_i(x) W_i^{\dagger}$ and $g_i(y) = W_i \tilde{g}_i(y) W_i$. 
It is easy to verify that $\inner{ f_i(x) }{ g_i(y) } = \tr \big( W_i^{\dagger} \tilde{f}_i(x) W_i^{\dagger} W_i \tilde{g}_i(y) W_i \big) 
= \tr \big( \tilde{f}_i(x) \tilde{g}_i(y) \big) = \big\langle \tilde{f}_i(x), ~ \tilde{g}_i(y) \big\rangle$ for all $x, y \in H_n$. 
Therefore, $\{ (f_i, g_i) \}_{i=1}^N$ also constitutes a valid $\bS_+^k$-factorization of $s(x,y)$. 

It remains to check if $\{ (f_i, g_i) \}_{i=1}^N$ satisfies \eqref{eqn:normalization}.
First, we can easily observe that
\[
	\bbE_{x \sim \mu }[ f_i(x) ]  = W_i^{\dagger} \bbE_{x \sim \mu}[ \tilde{f}_i(x) ] W_i^{\dagger} 
		= W_i^{\dagger} W_i^2 W_i^{\dagger} = \Pi_{\cR(W_i)}
\]
where $\cR(W_i)$ is the range of $W_i$ and $\Pi_{\cR(W_i)}$ is the projection matrix onto $\cR(W_i)$. 
Thus, $\big\| \bbE_{x \sim \mu }[ f_i(x) ] \big\|_{op} =  \| \Pi_{\cR(W_i)} \|_{op} = 1$. 
Next, we revisit \eqref{eqn:slack_two_repns_psd}, fix any $y \in H_n$, and take expectation with respect to $x \sim \mu$. 
On the left-hand side, we obtain $\bbE_{x \sim \mu}\big[ \frac{1}{1+\epsilon} \big( \frac{1}{n}(x^Ty)^2 + \epsilon \big) \big] = 1$ 
because $\bbE_{x \sim \mu }\big[ (x^T y)^2 \big]  = n$ (see Step 2). On the right-hand side, 
we have
\[
	\bbE_{x \sim \mu} \sum_{i=1}^N \inner{ f_i(x) }{ g_i(y) } 
		= \sum_{i=1}^N \inner{ \bbE_{x\sim\mu} [f_i(x)] }{ g_i(y) } 
		= \sum_{i=1}^N \inner{ \Pi_{\cR(W_i)} }{ g_i(y) }  
		= \sum_{i=1}^N \tr~ g_i(y)
\]
because $\Pi_{\cR(W_i)} g_i(y) = g_i(y)$ for all $y \in H_n$, by definition of $g_i$.
Therefore, $\sum_{i=1}^N \tr~ g_i(y) = 1$ for all $y \in H_n$.

\subparagraph{Step 3-B: Decomposition of $f_i$}
We decompose each $f_i$ into its `sharp' (spiky) component $\fs_i$ and the `flat' component $\ff_i$ using 
a fixed threshold $\Lambda$ whose value will be determined later in Step 4 of the proof. 
To be specific, for each $i \in [N]$, we define the component functions $\fs_i, \ff_i: H_n \to \bS_+^k$  as follows. 
Given $x \in H_n$, let $f_i(x) = \sum_{a=1}^k \lambda_a u_a u_a^T$ be the eigendecomposition of $f_i(x)$. 
Then we let 
\begin{equation}\label{eqn:decomposition}
	\fs_i(x) = \sum_{a=1}^k \lambda_a \Ind{\lambda_a > \Lambda}  u_a u_a^T,
	\qquad\text{and}\qquad
	\ff_i(x) = \sum_{a=1}^k \lambda_a \Ind{\lambda_a \leq \Lambda}  u_a u_a^T.
\end{equation}
Observe that $f_i = \fs_i + \ff_i$ and $\big\langle \fs_i(x),~ \ff_i(x) \big\rangle= \tr \big( \fs_i(x) \ff_i(x) \big) = 0$ for all $x \in H_n$. 
From now on, we may refer to $\fs_i$ ($\ff_i$, resp.) as the sharp component (flat component, resp.) of $f_i$.

By linearity of expectation, we can decompose the expression on the right-hand side of \eqref{eqn:slack_two_repns_psd.a} as follows:
\begin{equation}\label{eqn:sharp_flat}
	RHS \text{ in }\eqref{eqn:slack_two_repns_psd.a}
		= 
		\bbE_{y \sim \mu}  \sum_{i=1}^N \inner{\bbE_{x \sim \mu}[ q_y(x) \fs_i(x) ] }{ g_i (y)}
		+
		\bbE_{y \sim \mu}  \sum_{i=1}^N \inner{\bbE_{x \sim \mu}[ q_y(x) \ff_i(x) ] }{ g_i (y)}.
\end{equation}

\subparagraph{Step 3-C. Upper Bound on the Contribution of Sharp Components in \eqref{eqn:sharp_flat}}
In this paragraph, we argue that the first term on the right hand side of \eqref{eqn:sharp_flat} is bounded from above 
by $N \frac{k}{\Lambda} n^3 $. Our argument is based on the following three observations.
\begin{itemize}
	\item
		Let $\supp \fs_i = \{ x \in H_n: \fs_i(x) \neq 0 \}$. Then $| \supp \fs_i | < \frac{k}{\Lambda} ~2^n$ for all $i \in [N]$.
		It is because (i) $\tr~ \bbE_{x \sim \mu}[ \fs_i(x) ] \leq \tr~ \bbE_{x \sim \mu}[ f_i(x) ] \leq k \normop{ \bbE_{x \sim \mu}[ f_i(x) ] } = k$, 
		cf. \eqref{eqn:normalization}; (ii) $\tr~ \bbE_{x \sim \mu}[ \fs_i(x) ] = \frac{1}{2^n} \sum_{x \in H_n} \tr~ \fs_i(x)$; 
		and (iii) $\tr~ \fs_i(x) > \Lambda$ for all $x \in \supp \fs_i$ by definition of $\fs_i$.
		
	\item
		For each $i \in [N]$, $\big\langle \fs_i(x),~ g_i(y) \big\rangle \leq n$ for all $x, y \in H_n$. 
		This follows from Eq. \eqref{eqn:slack_two_repns_psd} because
		\[
			\inner{ \fs_i(x) }{ g_i(y) } \leq \sum_{i=1}^N \inner{ f_i(x) }{ g_i(y) } = \frac{1}{1+\epsilon} \bigg( \frac{1}{n}(x^Ty)^2 + \epsilon \bigg) \leq \frac{1}{1+\epsilon} \big( n + \epsilon \big) \leq n.
		\]
		
	\item
		Lastly, $|q_y(x)| \leq n(n-1)$ for all $x, y \in H_n$ because $q_y(x) = (x^T y)^2 - n \leq n(n-1)$ and $q_y(x) \geq -n$.
\end{itemize}
Combining the three observations above, we can see that for any $y \in H_n$,
\begin{align*}
	\sum_{i=1}^N \inner{\bbE_{x \sim \mu}[ q_y(x) \fs_i(x) ] }{ g_i (y)}
		&= \sum_{i=1}^N \bbE_{x \sim \mu} \Big[ q_y(x) \inner{ \fs_i(x) }{ g_i (y)} \Big]
		= \sum_{i=1}^N \sum_{x \in \supp \fs_i} \frac{1}{2^n} q_y(x) \inner{ \fs_i(x) }{ g_i (y)}\\
		&\leq \sum_{i=1}^N \frac{\big| \supp \fs_i \big|}{2^n} \Big( \max_{x,y \in H_n} \big| q_y(x) \big| \Big)
			\Big( \max_{x,y \in H_n} \inner{ \fs_i(x) }{ g_i (y)} \Big)
		\leq \frac{k}{\Lambda} n^2(n-1) N.
\end{align*}
Taking expectation with respect to $y \sim \mu$, we obtain
\begin{equation}\label{eqn:sharp.a}
	\bbE_{y \sim \mu}  \sum_{i=1}^N \inner{\bbE_{x \sim \mu}[ q_y(x) \fs_i(x) ] }{ g_i (y)} 
		\leq \frac{k}{\Lambda} n^2(n-1) N 
		\leq \frac{k}{\Lambda} n^3 N .
\end{equation}

\subparagraph{Step 3-D. Upper Bound on the Contribution of Flat Components in \eqref{eqn:sharp_flat}}
Next, we prove an upper bound for the second term on the right hand side of \eqref{eqn:sharp_flat}. Our proof is based on 
the concentration of the degree-$2$ harmonic components of bounded functions and the usual $\epsilon$-net argument. 

First, we reduce the matrix-valued function $\ff_i$'s to the supremum of multiple scalar-valued functions 
indexed over a finite set. Given $\enet > 0$, let $\cN$ be an $\enet$-net of $\bbS^{k-1}$ with the smallest 
possible cardinality. Note that $|\cN| \leq \big( 1 + \frac{2}{\enet} \big)^k$ by the well-known upper bound 
on the $\enet$-covering number of $\bbS^{k-1}$. 
Then
\begin{align*}
	\bbE_{y \sim \mu}  \Bigg[ \sum_{i=1}^N \inner{\bbE_{x \sim \mu}[ q_y(x) \ff_i(x) ] }{ g_i (y)} \Bigg]
		&\stackrel{(a)}{\leq} \bbE_{y \sim \mu} \Bigg[ \sum_{i=1}^N \normop{\bbE_{x \sim \mu}[ q_y(x) \ff_i(x) ] } \tr ~g_i (y) \Bigg]\\
		&\stackrel{(b)}{\leq} \bbE_{y \sim \mu} \bigg[ \max_{i \in [N]} \normop{\bbE_{x \sim \mu}[ q_y(x) \ff_i(x) ] } \bigg]\\
		&\stackrel{(c)}{\leq} \frac{1}{1 - 2\enet}\bbE_{y \sim \mu}  \bigg[ \max_{i \in [N] \atop v \in \cN}  \Big| v^T \bbE_{x \sim \mu}\big[ q_y(x) \ff_i(x) \big] v \Big| \bigg]\\
		&= \frac{1}{1 - 2\enet}\bbE_{y \sim \mu}  \bigg[ \max_{i \in [N] \atop v \in \cN} \Big| \bbE_{x \sim \mu}\big[ q_y(x) ~ v^T \ff_i(x) v \big] \Big| \bigg].
\end{align*}
In the above lines, (a) follows from Cauchy-Schwarz inequality; (b) is due to the normalization $\sum_{i=1}^N \tr~g_i(y) \equiv 1$; 
and (c) is obtained by the $\epsilon$-net argument, i.e., if $\cN$ is an $\enet$-net of $\bbS^{k-1}$, then for any $M \in \bS_+^k$, 
$\normop{ M } \leq \frac{1}{1 - 2\enet} \sup_{v \in \cN} v^T M v$. Now it remains to evaluate the expectation in the last line. 

Recall that $\bbE_{x \sim \mu}\big[ q_y(x) v^T \ff_i(x) v \big] = \big\langle  q_y(x), ~ v^T \ff_i(x) v \big\rangle_{\mu} = 2 \proj_2 \big( v^T \ff_i v\big) (y)$.
We observe that for each $(i, v) \in [N] \times \cN$, the derived random variable $\proj_2 \big( v^T \ff_i v\big) (y)$ is sub-exponential with parameters 
$(c_1 \| \proj_2 ( v^T \ff_i v ) \|_2^2/2, ~c_2 \| \proj_2 ( v^T \ff_i v ) \|_2 / \sqrt{2} )$, due to Lemma \ref{lemma:subexp_tail}. 
Here, $c_1, c_2>0$ are the same absolute constants that appear in Lemma \ref{lemma:hanson_wright}.

Next, we find a common upper bound on $\| \proj_2 ( v^T \ff_i v ) \|_2$ that holds for all $(i,v)$.
Note that for all $(i,v)$, $\bbE_{y \in \mu}[v^T \ff_i(y) v] \leq \bbE_{y \in \mu} \| \ff_i (y) \|_{op} \leq \bbE_{y \in \mu} \| f_i(y) \|_{op} = 1$ 
due to the normalization in \eqref{eqn:normalization}, and $0 \leq v^T \ff_i v \leq \Lambda$ by definition of $\ff_i$. 
Thus, we can apply Lemma \ref{lemma:harmonic} to get $\| \proj_2 ( v^T \ff_i v ) \|_2 \leq e \log \Lambda$ for all $(i,v)$, 
provided that we will choose the threshold $\Lambda \geq e$.

Now we can use a result on the expected maximum of $N |\cN|$ sub-exponential random variables (Lemma \ref{lem:maximal}) to obtain
\begin{align*}
	\bbE_{y \sim \mu}  \bigg[ \max_{i \in [N] \atop v \in \cN} \Big| \bbE_{x \sim \mu}\big[ q_y(x) ~ v^T \ff_i(x) v \big] \Big| \bigg]
		&= 2 \cdot\bbE_{y \sim \mu}  \bigg[ \max_{i \in [N] \atop v \in \cN} \big| \proj_2 \big( v^T \ff_i v\big) (y) \big| \bigg]\\
		&\leq 2 \cdot \frac{e \log \Lambda}{\sqrt{2}} \max \bigg\{ \sqrt{2 c_1 \log (N |\cN|)}, ~ 2 c_2 \log (N |\cN|) \bigg\}\\
		&\leq 2 e \log \Lambda \cdot \max \bigg\{ \sqrt{ c_1 \log (N |\cN|)}, ~ \sqrt{2} c_2 \log (N |\cN|) \bigg\}.
\end{align*}

Collecting the pieces in this step, we obtain the following upper bound:
\begin{equation}\label{eqn:flat.a}
	\begin{aligned}
	&\bbE_{y \sim \mu}  \sum_{i=1}^N \inner{\bbE_{x \in H_n}[ q_y(x) \ff_i(x) ] }{ g_i (y)} \\
	&\qquad	\leq \frac{2 e \log \Lambda}{1-2\enet} 
			\max \Bigg\{ \sqrt{ c_1 \log \bigg[ N \Big( 1 + \frac{2}{\enet} \Big)^k \bigg] }, ~ \sqrt{2} c_2 \log \bigg[ N \Big( 1 + \frac{2}{\enet} \Big)^k \bigg] \Bigg\}.
	\end{aligned}
\end{equation}

\paragraph{Step 4. Concluding the Proof}
Lastly, we revisit Eq. \eqref{eqn:slack_two_repns_psd.a} to conclude the proof. Recall that we obtained the value of the left-hand side in Step 2, 
cf. \eqref{eqn:LHS}, and derived an upper bound for the right-hand side in Step 3, cf. \eqref{eqn:sharp_flat}, \eqref{eqn:sharp.a}, and \eqref{eqn:flat.a}.
Putting these together, we have the following inequality that holds for any choice of parameters $\enet$, $\Lambda$ such that $0 < \enet < \frac{1}{2}$ and $\Lambda \geq e$:
\begin{equation}\label{eqn:conclusion}
	\frac{2}{1+\epsilon}(n-1)
		\leq \frac{k}{\Lambda} n^3 N 
			+ \frac{2 e \log \Lambda}{1-2\enet} 
			\max \Bigg\{ \sqrt{ c_1 \log \bigg[ N \Big( 1 + \frac{2}{\enet} \Big)^k \bigg] }, ~ \sqrt{2} c_2 \log \bigg[ N \Big( 1 + \frac{2}{\enet} \Big)^k \bigg] \Bigg\}.
\end{equation}

We choose $\enet = 1/4$ for simplicity because optimizing $\enet$ does not make much difference. 
Observe that $\log(9^k N) \geq 2 \log 3$ for all $k, N \geq 1$. Thus, $\sqrt{ c_1 \log (9^k N ) } \leq \sqrt{c_1 / 2 \log 3} \log(9^k N )$ for all $k, N \geq 1$.
Therefore, $\max \big\{ \sqrt{c_1 \log(9^k N)}, ~ \sqrt{2} c_2 \log(9^k N) \big\} \leq c \log(9^k N)$ where $c = \max\{ \sqrt{c_1 / 2 \log 3}, ~ \sqrt{2} c_2 \}$.

Then, we select $\Lambda$ that minimizes the right-hand side of \eqref{eqn:conclusion}.
It is easy to see that the upper bound is minimized (w.r.t. $\Lambda$) at $\Lambda^* = \frac{k n^3 N}{4 e c \log(9^k N )}$. 
Noticing that $\Lambda^* \leq \frac{k n^3 N}{4 e c \log(9^k )} $ (because $N \geq 1$), we get the following 
quadratic inequality in $\log N$ as a necessary condition for \eqref{eqn:conclusion}:
\begin{align}
	\frac{2}{1+\epsilon}(n-1)
		&\leq 4 e \cdot c \log(9^k N ) \big( 1 + \log \Lambda^* \big)		\nonumber\\
		&\leq 4 e \cdot c \big[ \log N + 2k \log 3 \big] \bigg[ \log N + \log \Big(  \frac{n^3}{8 \cdot c \log 3 } \Big) \bigg].	\label{eqn:quad.thm.2}
\end{align}

Letting $z = \log N \geq 0$, we note that \eqref{eqn:quad.thm.2} is a quadratic inequality of the form $(z+\alpha) (z+\beta) \geq \gamma$ where
\[
	\alpha = 2k \log 3, \qquad\beta = \log \Big(  \frac{n^3}{8 \cdot c \log 3 } \Big) , \qquad \gamma = \frac{1}{2 e \cdot c} \frac{n-1}{(1+\epsilon)}.
\] 
We want to solve this quadratic inequality with an implicit constraint $z \geq 0$ because $N \geq 1$. Observe that 
its discriminant $D = (\alpha-\beta)^2 + 4\gamma > 0$, regardless of $n, k, \epsilon$. Therefore, 
the set of solutions is given as
$\big\{ z \in \RR: ~(z+\alpha) (z+\beta) \geq \gamma,~ z \geq 0 \big\} = \big\{ z \in \RR: ~ z \geq \big[ \frac{-(\alpha+\beta) + \sqrt{D}}{2} \big]_+ \big\} $ 
where $[x]_+ = \max\{ x, 0 \}$.
\end{proof}

%

\subsection{Proof of Theorem \ref{thm:main.2}}\label{sec:proof_thm.2}
The following is a formal version of Theorem \ref{thm:main.2}, which will be proved later in this section. 
\begin{theorem}\label{thm:main_full.2}
	If $S$ is an average $\epsilon$-approximation of $\base{\bS_+^n}$, then for all positive integer $1 \leq k \leq n$,
	\[
		\log \xc_{\bS_+^k}(S)  \geq	\Big\{ \big( \alpha + \sqrt{ \alpha^2 + \beta^3 }\big)^{1/3} + \big( \alpha - \sqrt{ \alpha^2 + \beta^3 }\big)^{1/3} \Big\}^2
			- 2k \log 3
	\]
	where
	\[
		\alpha = \frac{\sqrt{n}}{22000 e (1+\epsilon)}
		\qquad\text{and}\qquad	
		\beta = \frac{1}{3} \bigg\{ \log \bigg( \frac{16(1+\epsilon) k^{1/2} n^{3/2} }{5 \sqrt{ 2 \log 3 }}  \bigg) - 2k \log 3 \bigg\}.
	\]
\end{theorem}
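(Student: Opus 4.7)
The plan is to follow the blueprint of the proof of Theorem~\ref{thm:main_full.1} in Section~\ref{sec:proof_thm.1}, modified to handle the weaker hypothesis. The pointwise containment $S \subseteq (1+\epsilon)\base{\bS_+^n}$ used there is no longer available; only the averaged statement $w_G(S) \le (1+\epsilon)\, w_G(\base{\bS_+^n}) \le (1+\epsilon)\sqrt{2n}$ is at our disposal, together with its probabilistic consequence via Lemma~\ref{lem:prob_control}. The key new ingredient is to introduce an auxiliary threshold $\tau>0$ and build the slack matrix only over directions in the ``good'' event $\mathcal{E}_\tau=\{G : h_S(G)-h_{\base{\bS_+^n}}(G)\le\tau\}$, which, by Lemma~\ref{lem:prob_control} and Remark~\ref{rem:width_Sn}, has measure at least $1-\epsilon\sqrt{2n}/\tau$.

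First, I would construct a generalized slack operator $s_\tau$ on $H_n\times H_n$ obtained by embedding the hypercube vertices into $\bS^n$ via the map $x \mapsto xx^T/n - I_n/n$, and upper-bounding $\langle M(y),\, xx^T/n - I_n/n\rangle$ by $h_S(M(y))$ whenever $M(y)\in\mathcal{E}_\tau$. This yields $s_\tau(x,y) \le \tau + \big(h_{\base{\bS_+^n}}(M(y)) - \langle M(y), xx^T/n - I_n/n\rangle\big)$ on the restricted domain, and inherits an $N$-term $\bS_+^k$-factorization from any $(\bS_+^k)^N$-lift of $S$ via Lemma~\ref{lem:gen_yannakakis}. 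I would then replay Steps~2--3 of the proof of Theorem~\ref{thm:main_full.1}: compute the LHS exactly after projecting onto the degree-$2$ harmonic subspace and averaging over $y$; normalize the factor functions so that $\|\bbE_x f_i(x)\|_{op}=1$ and $\sum_i \tr g_i(y) \equiv 1$ as in~\eqref{eqn:normalization}; split each $f_i$ into sharp and flat parts with threshold $\Lambda$ as in~\eqref{eqn:decomposition}; and bound the two contributions via the support bound for $\fs_i$ together with the hypercontractive estimate of Lemma~\ref{lemma:harmonic} combined with the sub-exponential maximal inequality of Lemmas~\ref{lemma:subexp_tail} and~\ref{lem:maximal} on an $\enet$-net of $\bbS^{k-1}$.

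The new feature is that the LHS of the two-representation identity now scales like $\tau \sqrt{n}/(1+\epsilon)$ rather than $n/(1+\epsilon)$, because of the $\tau$-shifted slack. Optimizing first over $\Lambda$ reproduces the $\log(k n^3 N)$ factor from Theorem~\ref{thm:main_full.1}, but the analogue of~\eqref{eqn:conclusion} still contains the free parameter $\tau$. A second optimization over $\tau$---balancing the linear-in-$\tau$ left-hand side against the $\sqrt{\log N}$ factor introduced by the flat-component estimate---promotes the inequality from quadratic in $\log N$ (as in Theorem~\ref{thm:main_full.1}) to cubic in $u := \sqrt{\log N + 2k\log 3}$, of the shape $u^3 + 3\beta u \ge 2\alpha$ with $\alpha = \Theta(\sqrt{n}/(1+\epsilon))$ and $\beta$ carrying the $\log(k n^3)$ dependence. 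Cardano's formula for the depressed cubic then yields the stated closed-form bound. The main obstacle I foresee is the coupling between the probabilistic restriction to $\mathcal{E}_\tau$ and the normalization $\sum_i \tr g_i(y)\equiv 1$ in~\eqref{eqn:normalization}: conditioning on $\mathcal{E}_\tau$ breaks this identity pointwise, so a further Markov-type truncation will likely be needed to preserve an averaged form of the normalization while retaining enough slack on $\mathcal{E}_\tau$ to keep the LHS lower bound nontrivial. Tracking explicit constants through this chain of optimizations is what should ultimately produce the (admittedly cosmetic-looking) factor $22000\, e (1+\epsilon)$ in the definition of $\alpha$.
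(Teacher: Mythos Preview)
Your proposal misidentifies the mechanism that converts the quadratic inequality of Theorem~\ref{thm:main_full.1} into a cubic one. In the paper, the gain from $\log(9^kN)$ to $\sqrt{\log(9^kN)}$ in the flat-component bound does \emph{not} come from optimizing over an auxiliary threshold $\tau$; it comes from replacing the second hypercube variable $y\in H_n$ by a \emph{Gaussian} direction $G$. Concretely, the paper works with the slack operator on $H_n\times\GG$, where $\GG=\{G\in\bS_0^n:|\langle G,X\rangle|\le 5\sqrt{2}\,w_G(S)\ \forall X\in S\}$, so that $S\subseteq 10(1+\epsilon)\sqrt{n}\,\GG^{\circ}$ deterministically. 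Then the inner product $\langle v^T\ff_i v,\,q_G\rangle_\mu$ is a centered Gaussian in $G$ with variance $2\|\proj_2(v^T\ff_i v)\|_2^2$, so the maximal inequality over $N|\cN|$ such variables yields $\sqrt{\log(N|\cN|)}$ directly. With $z=\sqrt{\log(9^kN)}$ this gives a depressed cubic $z^3+3\beta z\ge 2\alpha$, whence Cardano.

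Your plan, by contrast, keeps both coordinates in $H_n$ and invokes Lemmas~\ref{lemma:subexp_tail} and~\ref{lem:maximal}. But those sub-exponential bounds produce $\max\{\sqrt{c_1\log(9^kN)},\,\sqrt{2}c_2\log(9^kN)\}$, and exactly as in Step~4 of Theorem~\ref{thm:main_full.1} the $\log$ term dominates---so you are back to a quadratic, not a cubic, in $\log N$. No optimization over a probabilistic threshold $\tau$ will recover the missing square root; at best it tunes constants. Moreover, your slack construction on $H_n\times H_n$ is ill-posed under the average hypothesis: there is no finite $\epsilon'$ with $S\subseteq(1+\epsilon')\base{\bS_+^n}$, so the pair $(\base{\bS_+^n},Q)$ with $Q=(1+\epsilon')\base{\bS_+^n}$ does not sandwich $S$, and restricting to $y$ with $M(y)\in\mathcal{E}_\tau$ does not by itself manufacture a bona fide $Q\supseteq S$ whose polar's extreme points you can enumerate. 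The paper sidesteps this entirely via the Gaussian surrogate $\GG$; the probabilistic control you need is then Gaussian concentration (Lemma~\ref{lemma:gaussian_conc}) for $\Pr[G\in\GG]$ and chi-square concentration for the LHS lower bound $\bbE_{G|\GG}\bbE_x[q_G(x)^2]\ge\tfrac{1}{5}n(n-1)\cdot\Pr[\GG\cap\GGb\mid\GG]$, rather than Lemma~\ref{lem:prob_control}.
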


Now we discuss how Theorem \ref{thm:main.2} can be derived from Theorem \ref{thm:main_full.2}. 
For notational brevity in our derivation, we let 
$T_+ = \big( \alpha + \sqrt{ \alpha^2 + \beta^3 }\big)^{1/3}$ and $T_- = \big( \alpha - \sqrt{ \alpha^2 + \beta^3 }\big)^{1/3}$.
Suppose that $n$ is sufficiently large, tending to infinity. 
\begin{itemize}
\item
When $k = o_n \big( \big( \frac{n}{(1+\epsilon)^2} \big)^{1/3} \big)$, we can see that $\alpha^2 \gg | \beta |^3$ and thus, 
$\sqrt{ \alpha^2 + \beta^3 } \approx \alpha$. Therefore, $T_+ + T_- \approx ( 2\alpha )^{1/3}$, and in the end, 
$( T_+ + T_- )^2 - 2k \log 3 \approx ( 2 \alpha )^{2/3} \approx  C \cdot \frac{n^{1/3}}{(1+\epsilon)^{2/3}} $ for some constant $C$.

\item
When $k = \omega_n \big( \big( \frac{n}{(1+\epsilon)^2} \big)^{1/3} \big)$, note that $\beta < 0$ and $\alpha^2 \ll | \beta |^3$. 
Let $\gamma := \sqrt{ \alpha^2 + \beta^3 }$. We observe that $\gamma \approx |\beta|^{3/2} i$, and thus, $|\gamma| \approx |\beta|^{3/2} \gg \alpha$.
Then, we can see that $T_+ = ( \alpha + \gamma )^{1/3} \approx \gamma^{1/3} \big( 1 + \frac{\alpha}{3 \gamma} \big)$, and likewise, 
$T_- \approx \bar{\gamma}^{1/3} \big( 1 + \frac{\alpha}{3 \bar{\gamma}} \big)$ where $\bar{\gamma}$ is the complex conjugate of $\gamma$. 
Then it follows that 
\begin{align*}
	T_+ + T_-
		&\approx \gamma^{1/3} + \bar{\gamma}^{1/3} + \frac{\alpha}{3} \big( \frac{1}{\gamma^{2/3}} + \frac{1}{\bar{\gamma}^{2/3}} \big)
		\approx |\gamma|^{1/3} \big( e^{i \frac{\pi}{6}} + e^{-i \frac{\pi}{6}} \big) + \frac{\alpha}{3 |\gamma|^{2/3}} \big( e^{ -i \frac{\pi}{3}} + e^{i \frac{\pi}{3}} \big)\\
		&= \sqrt{3} |\gamma|^{1/3} \bigg( 1 + \frac{\alpha}{3\sqrt{3} |\gamma|} \bigg).
\end{align*}
Therefore, $( T_+ + T_- )^2 \approx 3 |\gamma|^{2/3} \big( 1 + \frac{2\alpha}{3\sqrt{3} |\gamma|}\big) = 3 |\gamma|^{2/3} + \frac{2}{\sqrt{3}} \frac{\alpha}{|\gamma|^{1/3}}$. 
Lastly, noticing that $ |\gamma|^{2/3} \approx |\beta| \approx \frac{2}{3}k \log 3$, 
we can conclude that $( T_+ + T_- )^2 - 2k \log 3 \approx  C \cdot \frac{1}{1+\epsilon}\sqrt{\frac{n}{k}} $ for some constant $C$.
\end{itemize}

\begin{proof}[Proof of Theorem \ref{thm:main_full.2}]
We follow a similar strategy to that of Theorem \ref{thm:main_full.1} with some modifications. 
Here, we assume $S$ is $\epsilon$-approximation of $\base{\bS_+^n}$ only in the average sense, and thus, 
$S$ can be arbitrarily shaped and $S \subseteq (1+\epsilon) \base{\bS_+^n}$ is not necessarily true. 
Instead, we define a set $Q$ -- to be precise, we let $Q = 10(1+\epsilon) \sqrt{n} \GG^{\circ}$ for $\GG$ to be defined 
in \eqref{eqn:nice_gaussian} -- that contains $S$ in an adaptive manner. Then we consider the generalized slack matrix 
of the pair $\big( \base{\bS_+^n}, ~Q \big)$. We express the slack matrix in two equivalent ways: 
one is obtained from the knowledge about the extreme points of $\base{\bS_+^n}$, and the other is obtained 
by assuming the existence of a $\bS_+^k$-factorization having $N$ factors. Interpreting the extreme points of 
$\base{\bS_+^n}$ and $Q^{\circ}$ as formal variables, $x$ and $G$, we may view the two expressions of 
the slack matrix as bivariate polynomials. As already done in the proof of Theorem \ref{thm:main_full.1}, 
we `smooth out' the two expressions with respect to one variable, $x$; and then take expectation with respect to 
the other variable, $G$. Comparing the two resulting expressions, we derive a lower bound on the number of factors $N$, 
which implies a lower bound on the $\bS_+^k$-extension complexity of $S$.

\paragraph{Step 1. Gaussian Surrogate for $S^{\circ}$ and the Associated Slack Matrix}
Let $\bS_0^n$ denote the set of $n \times n$ symmetric matrices with trace zero, endowed with the trace inner product. 
Let $\cN_0$ denote the standard Gaussian distribution associated to $\bS_0^n$, i.e., 
$G_0 \sim \cN_0$ if $G_0 = G - \frac{\tr G}{n} I_n$ where $G$ has the standard Gaussian distribution in $S^n$.
Then we define a set
\begin{equation}\label{eqn:nice_gaussian}
	\GG = \left\{ G \in \bS_0^n: \big| \inner{G}{X} \big| \leq 5\sqrt{2} w_G(S), ~\forall X \in S \right\}.
\end{equation}
The number $5\sqrt{2}$ is chosen for the convenience of our analysis, and has no special meaning. 
Observe that $w_G(S) \leq (1+\epsilon) \cdot w_G\big( \base{\bS_+^n} \big)  \leq (1+\epsilon) \sqrt{2n}$, cf. Remark \ref{rem:width_Sn}.



Then, we can see that 
\[
	-10 (1+\epsilon) \sqrt{n} \leq \inner{G}{X} \leq 10 (1+\epsilon) \sqrt{n},	\qquad \forall (X, G) \in S \times \GG .
\]
This implies that $\frac{1}{10(1+\epsilon) \sqrt{n}} \GG \subseteq S^{\circ}$, or equivalently, $S \subseteq 10(1+\epsilon) \sqrt{n} \GG^{\circ}$. 

Now we consider the slack operator associated to the pair $\big( \base{\bS_+^n}, ~ 10(1+\epsilon) \sqrt{n} \GG^{\circ} \big)$, 
treating $\frac{1}{10(1+\epsilon) \sqrt{n}} \GG$ as a surrogate for $S^{\circ}$. 
Specifically, we are led to study the following infinite matrix:
\[
	(\tilde{x}, G) \in \bbS^{n-1} \times \GG \mapsto 1 - \inner{ \tilde{x} \tilde{x}^T - \frac{1}{n} I_n } { \frac{1}{10(1+\epsilon)\sqrt{n}} G } 
		= 1 - \frac{1}{10(1+\epsilon)\sqrt{n}} \tilde{x}^T G \tilde{x}.
\]
We consider the PSD rank ($\bS_+^k$-rank) of the submatrix restricted to $\tilde{x} \in \big\{ -\frac{1}{\sqrt{n}}, \frac{1}{\sqrt{n}} \big\}^n 
\subset \bbS^{n-1}$, with a proper reparametrization ($x = \sqrt{n} \tilde{x}$), namely,
\begin{equation}\label{eqn:slack_gaussian}
	 s: (x, G) \in H_n \times \GG \mapsto 1 - \frac{1}{10(1+\epsilon) n\sqrt{n}} x^T G x.
\end{equation}
Assuming that we can write the matrix \eqref{eqn:slack_gaussian} as a sum of $N$ trace inner products of $\bS_+^k$ factors, we have
\begin{equation}\label{eqn:slack_two_repns_gaussian}
	1 - \frac{1}{10(1+\epsilon) n\sqrt{n}} x^T G x 
		= s(x,G) 
		= \sum_{i=1}^N \inner{ f_i(x) }{ g_i(G) },	\qquad \forall (x, G) \in H_n \times \GG
\end{equation}
where $f_i: H_n \to \bS_+^k$ and $g_i: \GG \to \bS_+^k$ are some matrix-valued functions.

Again, we `smooth out' the two expressions of $s(x,G)$ in \eqref{eqn:slack_two_repns_gaussian} and compare them to derive a lower bound for $N$. 
To be precise, for each fixed $G \in \GG$, we let $q_G(x) = - x^T G x$. 
Recall that we let $\mu$ denote the uniform probability measure on $H_n$, and observe that for any function $f: H_n \to \RR$, 
the inner product, $\langle f, q_G(x) \rangle_{\mu} =  \bbE_{x \sim \mu} [ f(x) q_G(x) ]$ is a centered Gaussian random variable.

Taking the inner product of both sides of \eqref{eqn:slack_two_repns_gaussian} with $q_G(x)$, we get
\begin{equation*}
	\bbE_{x \sim \mu}\bigg[ q_G(x) + \frac{1}{10(1+\epsilon) n\sqrt{n}} q_G(x)^2 \bigg] = 
		\sum_{i=1}^N \inner{\bbE_{x \sim \mu}[ q_G(x) \cdot f_i(x) ] }{ g_i (G)}.
\end{equation*}
Letting $\bbE_{G \sim \cN_0 | \GG}[ ~\cdot~ ]$ denote the conditional expectation 	
with respect to $G \sim \cN_0$ given $G \in \GG$, we can see that
\begin{equation}\label{eqn:slack_two_repns_gaussian.a}
	\underbrace{  \frac{1}{10(1+\epsilon) n\sqrt{n}} \cdot \bbE_{G \sim \cN_0 | \GG} \bbE_{x \sim \mu}\big[q_G(x)^2 \big]}_{=: LHS}
		 = 
		\underbrace{\bbE_{G \sim \cN_0 | \GG} \sum_{i=1}^N \inner{\bbE_{x \sim \mu}[ q_G(x) \cdot f_i(x) ] }{ g_i (G)}}_{=: RHS}.
\end{equation}

The rest of the proof is organized as follows. In Step 2, we prove a lower bound for the expectation on the left-hand side. 
In Step 3, we derive an upper bound on the expectation on the right-hand side as a function of $N$. 
In the end, we obtain the desired lower bound on $N$ in Step 4 by comparing these bounds.

\paragraph{Step 2. A Lower Bound for the Left-hand side of \eqref{eqn:slack_two_repns_gaussian.a}.}
We additionally define a set
\[
	\GGb  = \left\{ G \in \bS_0^n:  \bbE_{x \sim \mu}\big[q_G(x)^2 \big] \geq \frac{1}{5} n(n-1) \right\}.
\]
The constant $1/5$ is chosen for the convenience of analysis, and has no special meaning. By the law of total probability, we can see that
\begin{align*}
	\bbE_{G \sim \cN_0 | \GG} \bbE_{x \sim \mu}\big[q_G(x)^2 \big]
		&\geq \bbE_{G \sim \cN_0 | \GG \cap \GGb} \bbE_{x \sim \mu}\big[q_G(x)^2 \big] \cdot \Pr [ G \in \GG \cap \GGb ~|~ G \in \GG].
\end{align*}
Note that $\bbE_{G \sim \cN_0 | \GG \cap \GGb} \bbE_{x \sim \mu}\big[q_G(x)^2 \big] \geq \frac{1}{5} n(n-1)$ by definition of $\GGb$. 
Thus, it suffices to find a lower bound for the conditional probability, $ \Pr [ G \in \GG \cap \GGb ~|~ G \in \GG]$.

It is easy to see that
\[
	 \Pr [ G \in \GG \cap \GGb ~|~ G \in \GG] = \frac{ \Pr [ G \in \GG \cap \GGb ]}{ \Pr [ G \in \GG ]} \geq \frac{ \Pr[ G \in \GG ] - \Pr[ G \not\in \GGb ]}{ \Pr[ G \in \GG ]}.
\]
Observe that $\Pr[ G \in \GG ] \geq 1 - \exp\big( - \frac{(5\sqrt{2}-1)^2}{4\pi} \big) > 0.893$ by Lemma \ref{lemma:gaussian_conc}. 
Now it remains to show an upper bound for $\Pr[ G \not\in \GGb ]$.

We use standard concentration results for the chi-square distribution.
Note that if $G \sim \cN_0$, then $q_G(x) = - \tr~G - 2\sum_{i < j} G_{ij} x_i x_j = - 2\sum_{i < j} G_{ij} x_i x_j$, 
and therefore, $ \bbE_{x \sim \mu}\big[q_G(x)^2 \big] = 4 \sum_{i < j} G_{ij}^2$. 
Thus, we have $\bbE_{G \sim \cN_0} \bbE_{x \sim \mu} [q_G(x)^2 ] = 4 {n \choose 2} \frac{1}{2} = n(n-1) $.
Using an exponential inequality for chi-square distribution (e.g., \cite[Lemma 1]{laurent2000adaptive}), we obtain
$\Pr[ G \not\in \GGb ] \leq \exp \big( - \frac{2}{25} n(n-1)  \big) \leq 0.8522$ for all $n \geq 1$.

All in all, we obtain
\begin{equation}\label{eqn:left.gauss}
	LHS \text{ in }\eqref{eqn:slack_two_repns_gaussian.a}
		\geq \frac{1}{10(1+\epsilon) n\sqrt{n}} \cdot \frac{1}{5} n(n-1) \cdot \frac{ \Pr[ G \in \GG ] - \Pr[ G \not\in \GGb ]}{ \Pr[ G \in \GG ]}
		\geq \frac{\sqrt{n}}{2200 (1+\epsilon)}
\end{equation}
because $\frac{0.893 - 0.8522}{0.893} \geq 1/22$ and $n-1 \geq n/2$ for all $n \geq 1$.

\paragraph{Step 3. An Upper Bound for the Right-hand side of \eqref{eqn:slack_two_repns_gaussian.a}.}
Next, we prove an upper bound on the right-hand side of \eqref{eqn:slack_two_repns_gaussian.a}, which is a function of $N$. 
Note that for the same reason as discussed in Step 3-A of the proof of Theorem \ref{thm:main_full.1}, 
we may assume without loss of generality that the factor functions $f_i, g_i$ satisfy
\begin{equation}\label{eqn:normalization_gaussian}
	\normop{ \bbE_{x \sim \mu}[ f_i(x) ] } = 1, ~~\forall i \in [N]
	\qquad\text{and}\qquad
	\sum_{i=1}^n \tr \big(g_i(G) \big) = 1, ~~\forall G \in \GG.
\end{equation}
For each $i \in [N]$, we define the component functions $\fs_i, \ff_i: H_n \to \bS_+^k$ in the same way as in \eqref{eqn:decomposition}, 
using a fixed threshold $\Lambda$ whose value will be determined later in this proof, cf. Step 3-B of the proof of Theorem \ref{thm:main_full.1}.

By linearity of expectation, we can decompose the expression on the right-hand side of \eqref{eqn:slack_two_repns_gaussian.a} as
\begin{equation}\label{eqn:sharp_flat_gaussian}
	RHS \text{ in }\eqref{eqn:slack_two_repns_gaussian.a}
		= 
		\bbE_{G \sim \cN_0 | \GG}  \sum_{i=1}^N \inner{\bbE_{x \sim \mu}[ q_G(x) \cdot \fs_i(x) ] }{ g_i (G)}
		+
		\bbE_{G \sim \cN_0 | \GG}  \sum_{i=1}^N \inner{\bbE_{x \sim \mu}[ q_G(x) \cdot \ff_i(x) ] }{ g_i (G)}.
\end{equation}
In the two sub-steps below, we prove upper bounds for the two terms on the right hand side separately.

\subparagraph{Step 3-A. Upper Bound on the Contribution of Sharp Components in \eqref{eqn:sharp_flat_gaussian}}
Here we argue that the first term on the right hand side of \eqref{eqn:sharp_flat_gaussian} is bounded from above 
by $ \frac{16 (1+\epsilon) }{\Lambda} k n \sqrt{n} N$. Our argument is based on the following three observations.
\begin{itemize}
	\item
		Let $\supp \fs_i = \{ x \in H_n: \fs_i(x) \neq 0 \}$. Then $| \supp \fs_i | < \frac{k}{\Lambda} ~2^n$ for all $i \in [N]$, 
		cf. Step 3-C of the proof of Theorem \ref{thm:main_full.1}.
	
	\item
		Observe that $\langle \fs_i(x), ~ g_i(G) \rangle \leq \inner{ f_i (x) }{ g_i(G) } \leq s(x, G) \leq 2$ 
		for all $i \in [N]$ and for all $(x, G) \in H_n \times \GG$.
	
	\item
		$q_G(x) = - x^T G x = 8(1+\epsilon) n\sqrt{n} \big( s(x, G) - 1\big) \leq 8(1+\epsilon) n\sqrt{n} $ for all $(x, G) \in H_n \times \GG$.

\end{itemize}
Combining these observations, we can see that for every $G \in \GG$,
\begin{align*}
	\sum_{i=1}^N \inner{\bbE_{x \sim \mu}[ q_G(x) \fs_i(x) ] }{ g_i (G)}
		&= \sum_{i=1}^N \bbE_{x \sim \mu} \Big[ q_G(x) \inner{ \fs_i(x) }{ g_i (G)} \Big]
		\leq \sum_{i=1}^N \frac{\big| \supp \fs_i \big|}{2^n} \cdot 16(1+\epsilon) n \sqrt{n}\\
		&\leq  \frac{16 (1+\epsilon) }{\Lambda} k n \sqrt{n} N.
\end{align*}
This upper bound is independent of $G$, and thus, we get
\begin{equation}\label{eqn:sharp.gauss}
	\bbE_{G \sim \cN_0 | \GG}  \sum_{i=1}^N \inner{\bbE_{x \sim \mu}[ q_G(x) \cdot \fs_i(x) ] }{ g_i (G)}
		\leq  \frac{16 (1+\epsilon) }{\Lambda} k n \sqrt{n} N.
\end{equation}

\subparagraph{Step 3-B. Upper Bound on the Contribution of Flat Components in \eqref{eqn:sharp_flat_gaussian}}
Here we prove an upper bound for the second term in \eqref{eqn:sharp_flat_gaussian}. 
First of all, we observe that for every $G \in \GG$,
\[
	\sum_{i=1}^N \inner{\bbE_{x \sim \mu}[ q_G(x) \cdot \ff_i(x) ] }{ g_i (G)}
		\leq \sum_{i=1}^N \normop{ \bbE_{x \sim \mu}[ q_G(x) \cdot \ff_i(x) ] } \tr ~g_i (G)
		\leq \max_{i \in [N]}  \normop{ \bbE_{x \sim \mu}[ q_G(x) \cdot \ff_i(x) ] }
\]
due to Cauchy-Schwarz inequality and the normalization assumption that $ \sum_{i=1}^N \tr ~g_i (G) = 1, ~ \forall G \in \GG$. 

Given $\enet > 0$, let $\cN$ be an $\enet$-net of $\bbS^{k-1}$ with the smallest possible cardinality. 
It follows from the standard $\epsilon$-net argument that for each $i \in [N]$,
\[
	 \normop{ \bbE_{x \sim \mu}[ q_G(x) \cdot \ff_i(x) ] }
	 	= \sup_{v \in \bbS^{k-1}} v^T \bbE_{x \sim \mu} \big[ q_G(x) \cdot \ff_i(x) \big] v
		\leq \frac{1}{1-2 \enet} \max_{v \in \cN}  \bbE_{x \sim \mu} \big[ q_G(x) \cdot v^T \ff_i(x) v \big].
\]

Next, we observe that if $G \sim \cN_0$, then for every function $f: H_n \to \RR$, the derived random variable 
$\langle f, ~q_G(x) \rangle_{\mu}$ is a centered Gaussian random variable with variance
\begin{align*}
	\bbE_{G \sim \cN_0} \Big[ \langle f, ~q_G(x) \rangle_{\mu}^2 \Big]
		&=\bbE_{G \sim \cN_0} \Big[ \bbE_{x \sim \mu }\big[ f(x) \cdot x^TGx \big]^2 \Big]
		=\bbE_{G \sim \cN_0} \Bigg[ \bbE_{x \sim \mu }\bigg[  f(x) \cdot \Big( \tr~G + 2 \sum_{i < j} G_{ij} x_i x_j \Big)\bigg]^2 \Bigg]\\\
		&= 4 \sum_{i < j} \bbE_{G \sim \cN_0} [ G_{ij}^2] \cdot \bbE_{x \sim \mu} [ f(x) x_i x_j ]^2
		= 2 \sum_{i < j} \langle f(x), ~ x_i x_j \rangle_{\mu}^2
		= 2 \| \proj_2 f \|_2^2.
\end{align*}

Then we use Lemma \ref{lem:maximal} to obtain the following inequalities:
\begin{align*}
	\bbE_{G \sim \cN_0 | \GG}  \sum_{i=1}^N \inner{\bbE_{x \sim \mu}[ q_G(x) \cdot \ff_i(x) ] }{ g_i (G)}
		&\leq \frac{1}{\Pr[G \in \GG]} \bbE_{G \sim \cN_0 }  \sum_{i=1}^N \inner{\bbE_{x \sim \mu}[ q_G(x) \cdot \ff_i(x) ] }{ g_i (G)}	\nonumber\\
		&\leq \frac{1}{\Pr[G \in \GG]} \frac{1}{1-2 \enet} \bbE_{G \sim \cN_0 } \bigg[ \max_{i \in [N] \atop v \in \cN}  \bbE_{x \sim \mu}[ q_G(x) ~ v^T \ff_i(x) v ] \bigg]\\
		&\leq \frac{1}{\Pr[G \in \GG]} \frac{2}{1-2 \enet}  \bigg( \max_{i \in [N] \atop v \in \cN} \| \proj_2 \big( v^T \ff_i v \big) \|_2 \bigg) \sqrt{ \log ( N |\cN| )}.
\end{align*}
We have seen in Step 2 that $\Pr[ G \in \GG ] \geq 1 - \exp\big( - \frac{(5\sqrt{2}-1)^2}{4\pi} \big) \geq 4/5$. 
Also, Lemma \ref{lemma:harmonic} ensures that $\| \proj_2 ( v^T \ff_i v ) \|_2 \leq e \log \Lambda$ for all $(i,v)$, 
provided that we will choose the threshold $\Lambda \geq e$. Lastly, it is well known that $|\cN| \leq \big( 1 + \frac{2}{\enet} \big)^k$. 
In conclusion, we obtain
\begin{equation}\label{eqn:flat.gauss}
	\bbE_{G \sim \cN_0 | \GG}   \sum_{i=1}^N \inner{\bbE_{x \sim \mu}[ q_G(x) \cdot \ff_i(x) ] }{ g_i (G)}
		\leq \frac{5e \log \Lambda}{2(1- 2\enet)} \sqrt{\log \bigg[ N \Big( 1 + \frac{2}{\enet} \Big)^k \bigg] }.
\end{equation}

\paragraph{Step 4. Concluding the Proof}
Lastly, we revisit Eq. \eqref{eqn:slack_two_repns_gaussian.a} to conclude the proof. Recall that we obtained a lower bound for the left-hand side in Step 2, 
cf. \eqref{eqn:left.gauss}, and derived an upper bound for the right-hand side in Step 3, cf. \eqref{eqn:sharp_flat_gaussian}, \eqref{eqn:sharp.gauss}, and \eqref{eqn:flat.gauss}.
Putting these together, we obtain the following inequality that holds for any choice of parameters $\enet$, $\Lambda$ such that $0 < \enet < \frac{1}{2}$ and $\Lambda \geq e$:
\begin{equation}\label{eqn:conclusion.gauss}
	\frac{1}{2200(1+\epsilon)}\sqrt{n}
		\leq \frac{16 (1+\epsilon) }{\Lambda} k n \sqrt{n} N
			+ \frac{5e \log \Lambda}{2(1-2\enet)} \sqrt{\log \bigg[ N \Big( 1 + \frac{2}{\enet} \Big)^k \bigg] }.
\end{equation}

We choose $\enet = 1/4$ for simplicity because optimizing $\enet$ does not make much difference. 
Next, we find $\Lambda$ that minimizes the right-hand side of \eqref{eqn:conclusion.gauss}. 
It is easy to see that the upper bound is minimized (w.r.t. $\Lambda$) at $\Lambda^* = \frac{16(1+\epsilon) k n\sqrt{n} N}{5 e \sqrt{ \log(9^k N ) } }$. 
As a result, we get the following inequality from \eqref{eqn:conclusion.gauss} by choosing $\Lambda = \Lambda^*$ and noticing $N \geq 1$:
\begin{align}
	\frac{1}{11000 e (1+\epsilon)}\sqrt{n}
		&\leq \sqrt{ \log(9^k N) } \cdot \log \bigg( \frac{16(1+\epsilon) k n\sqrt{n} N }{5 \sqrt{ \log(9^k N ) }}  \bigg)	\nonumber\\
		&\leq \sqrt{ \log(9^k N) } \cdot \Bigg[ \log N + \log \bigg( \frac{16(1+\epsilon) k n\sqrt{n} }{5 \sqrt{ \log(9^k ) }}  \bigg) \Bigg].	\label{eqn:cubic.thm.2}
\end{align}

Letting $z = \sqrt{ \log (9^k N) }$, we can see that \eqref{eqn:cubic.thm.2}  is a cubic inequality of the form $z^3 + 3 \beta z \geq 2\alpha$ where
\[
	\alpha = \frac{\sqrt{n}}{22000 e (1+\epsilon)}
	\quad\text{and}\quad	
	\beta = \frac{1}{3} \log \bigg( \frac{16(1+\epsilon) k n\sqrt{n} }{5 \cdot 9^k \sqrt{ \log(9^k ) }}  \bigg).
\]
We want to solve the cubic inequality with an implicit constraint $z > 0$ because $\log(9^k N) > 0$ for all $k, N \geq 1$. 

Note that $\alpha > 0$ for all $\epsilon\geq 0$, $n\geq1$. Observe that the cubic equation $z^3 + 3\beta z - 2\alpha = 0$
always has a unique positive real root when $\alpha > 0$, regardless of the value of $\beta$. Letting $z_*$ denote the positive real root, 
we can see that $\{ z \in \RR: ~ z^3 + 3\beta z \geq 2\alpha, ~ z > 0 \} = \{ z \in \RR: ~ z \geq z_* \}$.
Indeed, we can explicitly write $z_*$ as $z_* = \big( \alpha + \sqrt{ \alpha^2 + \beta^3 }\big)^{1/3} + \big( \alpha - \sqrt{ \alpha^2 + \beta^3 }\big)^{1/3}$, 
due to the general cubic formula, commonly referred to as Cardano's formula. See Appendix \ref{sec:cubic_eq} for more details.

Consequently, we obtain the following lower bound for $N$ by solving \eqref{eqn:quad.thm.2}:
\begin{equation*}
	\log N \geq
		\Big\{ \big( \alpha + \sqrt{ \alpha^2 + \beta^3 }\big)^{1/3} + \big( \alpha - \sqrt{ \alpha^2 + \beta^3 }\big)^{1/3} \Big\}^2
			- 2k \log 3
\end{equation*}
because $\sqrt{ \log(9^k N )} \geq z_*$ if and only if $\log N \geq z_*^2 - 2 k \log 3$.

\end{proof}

\bibliographystyle{alpha}
\bibliography{psd_xc}

\appendix
\section{Proof of Some Lemmas from Section \ref{sec:background}}\label{sec:proof_lemma}
\subsection{Proof of Lemma \ref{lemma:harmonic}}
\begin{proof}
	Let $f = f_0 + f_1 + f_2 + \dots + f_n$ be the Fourier expansion of $f$. Then for $0 \leq \rho \leq 1$,
	\[
		\| \proj_2 f \|_2^2 = \| f_2 \|_2^2 = \frac{1}{\rho^4} \big( \rho^2 \| f_2 \|_2 \big)^2  \leq \frac{1}{\rho^4} \sum_{k=0}^n \rho^{2k} \| f_k \|_2^2 = \frac{1}{\rho^4} \| \Trho f \|_2^2.
	\]
	With $\rho = \sqrt{p-1}$ for $1 \leq p \leq 2$, we have $\| \Trho f \|_2 \leq \| f \|_p$ by hypercontractivity. Then it follows that
	\[
		\| \proj_2 f \|_2 \leq \frac{1}{\rho^2} \| \Trho f \|_2 \leq \frac{1}{p-1} \| f \|_p \leq \frac{1}{p-1} \Lambda^{p-1} 
	\]
	because $\| f \|_p = \bbE[ f^p ]^{\frac{1}{p}} \leq \Lambda^{\frac{p-1}{p}} \bbE[ f ]^{\frac{1}{p}} \leq \Lambda^{\frac{p-1}{p}} \leq \Lambda^{p-1}$. 
	If $\Lambda < e$, we choose $p = 2$ to get $\| \proj_2 f \|_2 \leq \Lambda$. 
	Otherwise, we choose $p = 1 + \frac{1}{\log \Lambda}$ to obtain $\| \proj_2 f \|_2 \leq e \log( \Lambda )$. 
\end{proof}

\subsection{Proof of Lemma \ref{lem:largest_GOE}}
\begin{proof}
	We consider a Gaussian process $(X_v)_{v \in \bbS^{n-1}}$ defined over $ \bbS^{n-1}$ such that 
	$X_v = v^T G v + \gamma$ with $G$ being standard Gaussian in $\bS^n$ and $\gamma \sim N(0,1)$ 
	independent of $G$. It is easy to verify that $\bbE \big[ \sup_{v \in \bbS^{n-1}} \langle v, Gv \rangle \big] 
	= \bbE_{G, \gamma}\big[ \sup_{v \in  \bbS^{n-1}} X_v \big]$.
	Now we introduce an auxiliary Gaussian process $(Y_v)_{v \in  \bbS^{n-1}}$ such that $Y_v = g^Tv$ with $g \sim N(0, 2 I_n)$. 
	Observe that for all $u, v \in  \bbS^{n-1}$,
	(1) $\bbE X_v = \bbE Y_v = 0$; 
	(2) $\bbE X_v^2 = \bbE Y_v^2 = 2$; and
	(3) $\bbE X_u X_v - \bbE Y_u Y_v = (1 - u^T v )^2 \geq 0$.
	Thus, we can apply Slepian's lemma (Lemma \ref{lem:slepian}) to obtain $\bbE_{G, \gamma} \big[ \sup_{v \in \bbS^{n-1}} X_v \big] 
	\leq \bbE_{g \sim N(0, 2I_n)} \big[ \sup_{v \in \bbS^{n-1}} Y_v \big] = \bbE_{g \sim N(0, 2I_n)} \| g \|_2 
	\leq \big( \bbE_{g \sim N(0, 2I_n)} \| g \|_2^2 \big)^{1/2} = \sqrt{2n}$. 
\end{proof}

\subsection{Proof of Lemma \ref{lemma:gaussian_conc}}
\begin{proof}
	Let $h_K(u) := \max_{x \in K} \inner{u}{x} = \| u \|_{K^{\circ}}$ denote the support function of $K$. 
	The function $h_K$ is $L$-Lipschitz with $L = \sup_{x \in K} \| x \|_2$, the diameter of $K$, 
	because for any $u, v \in \RR^d$,
	\[
		\big| h_K(u) - h_K(v) \big| = \big| \|u\|_{K^{\circ}} - \|v\|_{K^{\circ}} \big| \leq \| u - v \|_{K^{\circ}} 
			\leq \sup_{x \in K} \| x \|_2 \|u-v\|_2.
	\]
	Moreover, we can show that $\sup_{x \in K} \| x \|_2 \leq \sqrt{2\pi} w_G(K)$. 
	To see this, let $B(0,R)$ denote the Euclidean ball centered at $0$ with radius $R$. 
	It follows from \cite[Proposition 7.5.2-(e)]{vershynin2018high} that $\sup_{x, y \in K} \| x - y \|_2 \leq \sqrt{2\pi} w_G(K)$.
	Since $0 \in K$, this implies $K \subseteq B(0, \sqrt{2\pi}w_G(K) )$.
	Applying Lemma \ref{lem:gauss_conc} with $f=h_K$ and $\tau = \alpha w_G(K)$ completes the proof.
\end{proof}

\subsection{Proof of Lemma \ref{lemma:subexp_tail}}
\begin{proof}
	Let $A$ be a symmetric $n \times n$ matrix such that $A_{ii} = 0, ~\forall i$ and $A_{ij} = \frac{1}{2} \bbE_{Y \sim \mu(H_n)}[ Y_i Y_j f(Y) ]$ 
	for $i \neq j$. Then we observe that for all $X \in H_n$,
	\[
		\proj_2(f) (X)  
			= \sum_{i = 1 \atop j > i}^n X_i X_j \bbE_{Y \sim \mu(H_n)}[ Y_i Y_j f(Y) ]
			= X^T A X.
	\]
	Note that $X_i$ is sub-Gaussian with sub-Gaussian parameter $1$ for all $i$ because $\bbE[ e^{\lambda X_i} ] 
	= \frac{1}{2}( e^{\lambda} + e^{-\lambda} ) \leq e^{\frac{\lambda^2}{2}}$.
	To conclude the proof, we apply Lemma \ref{lemma:hanson_wright} and observe that $\| A \|_F^2 = \sum_{i = 1 \atop j \neq i}^n \big( \frac{1}{2} 
	\bbE_{X \sim \mu(H_n)}[ X_i X_j f(X) ] \big)^2 = \frac{1}{2} \| \proj_2 f \|_2^2$ and $\| A \|_{op} \leq \| A \|_F$. 
\end{proof}

\subsection{Proof of Lemma \ref{lem:maximal}}
\begin{proof}
	For any $\lambda \in (0, 1/c]$, 
	\begin{align*}
		\bbE\Big[ \max_{i \in [N] } X_i \Big]
			&= \frac{1}{\lambda} \bbE\bigg[ \log \exp \Big( \lambda  \max_{i \in [N]} X_i \Big)  \bigg]	
			\leq \frac{1}{\lambda} \log \bbE\bigg[  \exp \Big( \lambda  \max_{i \in [N]} X_i \Big)  \bigg]	\qquad\because\text{Jensen's inequality}\\
			&= \frac{1}{\lambda} \log \bbE\bigg[   \max_{i \in [N]} \exp \Big( \lambda X_i \Big)  \bigg]	
			\leq \frac{1}{\lambda} \log \bigg(\sum_{i=1}^{N} \bbE\Big[    \exp \big( \lambda X_i \big)  \Big] \bigg)\\	
			&\leq \frac{1}{\lambda} \log \bigg( \sum_{i=1}^{N} e^{\frac{\lambda^2 v}{2}} \bigg)	\qquad\because\text{sub-exponential}\\
			&= \frac{\log N}{\lambda} + \frac{\lambda v}{2}.
	\end{align*}
	It remains to choose $\lambda$ in the interval $(0, 1/c]$ to optimize the upper bound. 
	If $\sqrt{2 \log N / v } \leq 1/c$, then we choose $\lambda = \sqrt{ 2 \log N / v }$ to get $\bbE\big[ \max_{i \in [N] } X_i \big] \leq \sqrt{2 v \log N}$.
	On the other hand, if $\sqrt{2 \log N / v } \leq 1/c$, then we choose $\lambda = 1/c$ to get $\bbE\big[ \max_{i \in [N] } X_i \big] 
	\leq 2c \log N $ since $v/2c \leq \sqrt{2 \log N / v} \leq c \log N$.
\end{proof}

\section{More on Example \ref{exmp:avg_davg} (Ball, Needle, and Pancake)}
Let $B_2^d := \{ x \in \RR^d: \| x \|_2 \leq 1 \}$ denote the $d$-dimensional unit $\ell_2$-ball, and let $B = B_2^d$. Fix $0 < \delta < 1$, and 
let $N = \conv \big\{ B_2^d(0, 1) \cup \{ \pm \frac{1}{\delta} e_1 \} \big\}$ be the `needle' where $e_1 = (1, 0, \dots, 0) \in \RR^d$. 
Lastly, we define the `pancake' $P= \{ x \in B: -\delta \leq x_1 \leq \delta  \}$ where $x_1$ is the first coordinate of $x \in \RR^d$. 
Observe that $N$ and $P$ are the polars of each other, and $B$ is the polar of itself. 

First of all, $w_G(B) = \bbE_g \| g \|_2 = \kappa_d$ and it is known that 
$\sqrt{d - 1/2} \leq \kappa_d \leq \sqrt{d - d/(2d+1)}$, cf. the paragraph below Definition \ref{defn:gaussian_width}. 
Next, we can see that $w_G(N) \geq \frac{1}{\delta}\sqrt{2/\pi}$ because $ \{ \pm \frac{1}{\delta} e_1 \} \subseteq N$ 
and thus, $w_G(N) \geq w_G \big( \{ \pm \frac{1}{\delta} e_1 \} \big)  = \frac{1}{\delta} \bbE_{g \sim \cN(0,1)} |g| = \frac{1}{\delta}\sqrt{2/\pi}$. 
Lastly, observe that $w_G(P) \geq \kappa_{d-1} \geq \sqrt{d - 3/2}$ because $\{0\} \times B_2^{d-1}(0,1) \subseteq P$ and 
$w_G(P) \geq w_G \big( \{0\} \times B_2^{d-1}(0,1) \big) = w_G \big( B_2^{d-1}(0,1) \big) = \kappa_{d-1}$.
	
It follows that $B$ is an $\epsilon$-approximation of $P$ in the average sense for $\epsilon = \kappa_d / \kappa_{d-1} - 1 
\leq 3 / (2d - 3)$. Nevertheless, $B$ is not an $\epsilon'$-approximation of $P$ in the dual-average sense 
unless $\epsilon' \geq \frac{1}{\delta} \sqrt{2/\pi} / \kappa_d - 1\geq \frac{2}{ \delta \sqrt{  \pi(2d-1) } } - 1$, 
which can be made arbitrarily large by choosing small $\delta$. 
For example, if we choose $\delta \leq 1/\sqrt{\pi(2d-1)}$, then $\epsdavg(P,S) \geq 1$ 
whereas $\epsavg(P, S) \leq 3/(2d-3)$ regardless of $\delta$.

\section{Solving the Cubic Inequality $z^3 + \alpha z \geq \beta$ with $\beta > 0$ }\label{sec:cubic_eq}
Consider a cubic equation of the form $z^3 + \alpha z - \beta = 0$, which is commonly referred to as a depressed cubic.
Note that when $\beta > 0$, this cubic equation always has a positive real root. The other two roots can be either negative real roots 
(when $D \leq 0$), or a pair of complex conjugate roots (when $D > 0$), depending on the sign of its discriminant, 
$D = (\alpha/3)^3 + (\beta/2)^2$. 

Indeed, we can find the roots with a generic cubic formula, known as Cardano's formula. 
Let $i = \sqrt{-1}$ denote the imaginary unit, $\omega = \frac{-1 + \sqrt{3} i}{2}$ be a primitive 3rd of unity, and
\begin{equation}\label{eqn:ST}
	T_+ = \sqrt[3]{ \frac{\beta}{2} + \sqrt{ \Big( \frac{\beta}{2} \Big)^2 + \Big( \frac{\alpha}{3} \Big)^3 } }
	\quad
	\text{and}
	\quad
	T_- = \sqrt[3]{ \frac{\beta}{2} - \sqrt{ \Big( \frac{\beta}{2} \Big)^2 + \Big( \frac{\alpha}{3} \Big)^3 } }.
\end{equation}

\paragraph{Case 1: $D > 0$.}
When $D > 0$, the cubic equation $z^3 + \alpha z - \beta = 0$ with $\beta > 0$ has only one real root, $z^*= T_+ + T_-$, 
which turns out to be positive. Thus, the set of real solutions for the cubic inequality $z^3 + \alpha z \geq \beta$ is 
$\{ z \in \RR: z \geq T_+ + T_- \}$.

\paragraph{Case 2: $D \leq 0$.}
There are three real roots for the cubic equation $z^3 + \alpha z - \beta = 0$, which can be written as
\[
	z_1 = T_+ + T_-,	\qquad
	z_2 = \omega T_+ + \omega^2 T_-,	\qquad
	z_3 = \omega^2 T_+ + \omega T_-.
\]
One of these three real roots is positive, and the other two are negative.

Note that \eqref{eqn:ST} now involves complex roots, and the choice of branches might affect the order of the roots, $z_1, z_2, z_3$, 
however, the choice will not change the values of the roots. To avoid any ambiguity in our description, we choose the principal branch so that 
$\Arg{ \sqrt[m]{z} } \in (-\frac{\pi}{m}, \frac{\pi}{m}]$ for any complex number $z$ and any positive integer $m$. 

Observe that $T_+ = \sqrt[3]{ \beta/2 + \sqrt{|D|}i }$ and $\Arg{ T_+ } \in [0, \pi/3)$. Similarly, we can see that $\Arg{ T_- } \in (-\pi/3, 0]$. 
It follows that $T_+ + T_-$ is a positive real number, and thus, the largest real root. Thus, the set of real solutions for the cubic inequality 
$z^3 + \alpha z \geq \beta$ is $\{ z \in \RR: z \geq T_+ + T_- \}$.

\end{document}